\title[On multiplier systems and theta functions]
{On multiplier systems and theta functions of half-integral weight 
for the Hilbert modular group $\mathrm{SL}_2(\mathfrak{o})$} 
\author{Hiroshi Noguchi} 
\begin{document} 
\maketitle \theoremstyle{definition} 
\newtheorem{dfn}{Definition} \newtheorem{thm}{Theorem}
\newtheorem{prop}{Proposition} \newtheorem{cor}{Corollary}
\newtheorem{lem}{Lemma}
\newcommand{\mz}{\mathbb{Z}} \newcommand{\mq}{\mathbb{Q}} 
\newcommand{\ma}{\mathbb{A}} \newcommand{\mr}{\mathbb{R}}
\newcommand{\mac}{\mathbb{C}} \newcommand{\wt}[1]{\widetilde{\mathrm{SL}_2(#1)}} \newcommand{\ms}{\mathfrak{S}} \newcommand{\maa}{\mathfrak{a}}
\newcommand{\mo}{\mathfrak{o}} \newcommand{\map}{\mathfrak{p}} 
\newcommand{\mb}{\mathfrak{b}} \newcommand{\mh}{\mathfrak{h}} 
\newcommand{\mk}{\mathbf{k}} \newcommand{\me}{\mathbf{e}}
\newcommand{\md}{\mathfrak{d}} \newcommand{\mv}{\mathbf{v}}

\begin{abstract} 
Let $F$ be a totally real number field and $\mo$ the ring of integers of $F$. 
We study theta functions which are Hilbert modular forms of half-integral weight 
for the Hilbert modular group $\mathrm{SL}_2(\mathfrak{o})$. 
We obtain an equivalent condition that there exists a multiplier system of half-integral weight 
for $\mathrm{SL}_2(\mo)$. 
We determine the condition of $F$ that there exists a theta function 
which is a Hilbert modular form of half-integral weight for $\mathrm{SL}_2(\mathfrak{o})$. 
The theta function is defined by a sum on a fractional ideal $\maa$ of $F$. 
\end{abstract}

\section{Introduction} \label{intro} 
Put $e(z)=e^{2\pi i z}$ for $z \in \mac$. 
It is known that the modular forms of $\mathrm{SL}_2(\mz)$ of weight 1/2 and 3/2 are 
the Dedekind eta function $\eta(z)$ and its cubic power $\eta^3(z)$ up to constant, respectively. 
Here, $\eta(z)$ is given by 
$$\eta(z)=e(z/24)\prod_{m\ge 1} (1-e(mz)) \hspace{15pt} (z \in \mathfrak{h}), $$ 
where $\mh$ is the upper half plane. 
It is known that 
$$\eta(z)=\frac{1}{2} \sum_{m \in \mz} \chi_{12}(m) e(mz/24), \hspace{15pt} 
\eta^3(z)=\frac{1}{2} \sum_{m \in \mz} m\, \chi_{4}(m) e(mz/8). $$ 
Here, $\chi_{12}$ and $\chi_4$ are the primitive character mod~12 and mod~4, respectively. 
Note that $\eta(z)$ and $\eta^3(z)$ are theta functions defined by a sum on $\mz$. 

The function $\eta(z)$ has the transformation formula with respect to modular transformations 
(see \cite{Pe2, Ra1, We1}). 
Let $\left(\dfrac{\cdot}{\cdot}\right)$ be the Jacobi symbol. 
We define $\left(\dfrac{\cdot}{\cdot}\right)^*$ and 
$\left(\dfrac{\cdot}{\cdot}\right)_*$ by 
$$\left(\frac{c}{d}\right)^*=\left(\frac{c}{|d|}\right), \hspace{15pt} 
\left(\frac{c}{d}\right)_*=t(c, d) \left(\frac{c}{d}\right)^*, \hspace{15pt} 
t(c, d)=\begin{cases}-1 & c, d<0 \\ 1 & \text{otherwise, } \end{cases} $$ 
for $c \in \mz \backslash \{0\}$ and $d \in 2\mz+1$ such that $(c, d)=1$. 
We understand 
$$\left(\frac{0}{\pm 1}\right)^*=\left(\frac{0}{1}\right)_*=1, \hspace{15pt} 
\left(\frac{0}{-1}\right)_*=-1$$ 
(see \cite[Chapter 4 \S 1]{Kn1}). 

For $g \in \mathrm{SL}_2(\mr)$ and $z \in \mh$, put 
\begin{equation} \label{capJ} 
J(g, z)=\begin{cases}\sqrt{d} & \text{if } c=0, d>0 \\ 
-\sqrt{d} & \text{if } c=0, d<0 \\ (cz+d)^{1/2} & \text{if } c \ne 0, \end{cases} 
\hspace{15pt} g=\begin{pmatrix} a & b \\ c& d \end{pmatrix}. 
\end{equation} 
Here, we choose arg$(cz+d)$ such that $-\pi<$arg$(cz+d) \le \pi$. 
Then we have 
\begin{equation} \label{form} \eta(\gamma(z))=\mv_\eta(\gamma)J(\gamma, z)\eta(z), \hspace{15pt} 
\gamma(z)=\dfrac{az+b}{cz+d} \in \mh
\end{equation} 
for any $\gamma=\begin{pmatrix} a & b \\ c& d \end{pmatrix} \in \mathrm{SL}_2(\mz)$, 
where the multiplier system $\mv_{\eta}(\gamma)$ is given by  
\begin{equation} \label{etamul} 
\mv_{\eta}(\gamma)=\begin{cases} \left(\dfrac{d}{c}\right)^* e\left(\dfrac{(a+d)c-bd(c^2-1)-3c}{24}\right)&c \text{ : odd } \\ \noalign{\vskip 10pt}
\left(\dfrac{c}{d}\right)_* e\left(\dfrac{(a+d)c-bd(c^2-1)+3d-3-3cd}{24}\right) &c\text{ : even.} 
 \end{cases} \end{equation}

It is natural to ask the following problem. 
When does a Hilbert modular theta series of weight 1/2 with respect to $\mathrm{SL}_2(\mo)$ exist? 
Here, $\mo$ is the ring of integers of a totally real number field $F$. 
In 1983, Feng \cite{Fe1} studied this problem.   
She gave a sufficient condition for the existence of a Hilbert modular theta series of weight 1/2 
with respect to $\mathrm{SL}_2(\mo)$ and constructed certain Hilbert modular theta series. 
These series are defined by a sum on $\mo$. 

Let $K$ be a real quadratic field and $d_K$ the discriminant of $K$. 
Gundlach \cite[p.30]{Gu1}, \cite[Remark 4.1.]{Gu2} showed that 
if $d_K \equiv 1$ mod~8, then there exist multiplier systems of weight 1/2 
for a Hilbert modular group belonging to a certain theta series.
Naganuma \cite{Na1} obtained a Hilbert modular form of level 1 for a real quadratic $\mathbb{Q}(\sqrt{D})$, \ $D\equiv 1$ mod $8$ with class number one, using modular imbeddings, 
from the theta constant with the characteristic (1/2, 1/2, 1/2, 1/2) of degree 2. 

In this paper, we solve the problem above completely. 
We consider theta functions defined by a sum on a fractional ideal $\maa$ of $F$. 
Let $v$ be a place of $F$ and $F_v$ the completion of $F$ at $v$. 
When $v$ is a finite place, we write $v<\infty$. 
When $v$ is an infinite place, we have $F_v \simeq \mr$ and write $v \mid \infty$. 
Let $\ma$ be the adele ring of $F$. 

Let $n=[F:\mq]$ and $\iota_v:F \to F_v$ be the embedding for any $v$. 
The entrywise embeddings of $\mathrm{SL}_2(F)$ into $\mathrm{SL}_2(F_v)$ 
are also denoted by $\iota_v$. 
The metaplectic group of $\mathrm{SL}_2(F_v)$ is denoted by $\wt{F_v}$, 
which is a nontrivial double covering group of $\mathrm{SL}_2(F_v)$. 
Set-theoretically, it is $\{[g, \tau] \mid g \in \mathrm{SL}_2(F_v), \tau \in \{\pm 1\}\}$. 
Its multiplication law is given by $[g, \tau][h, \sigma]=[gh, \tau\sigma c(g, h)]$ 
for $[g, \tau], [h, \sigma] \in \wt{F_v}$, 
where $c(g, h)$ is the Kubota 2-cocycle on $\mathrm{SL}_2(F_v)$. 
Put $[g]=[g, 1]$. 

Let $\{\infty_1, \cdots, \infty_n\}$ be the set of infinite places of $F$. 
Put $\iota_i=\iota_{\infty_i}$ for $1 \le i \le n$. 
We embed $\mathrm{SL}_2(F)$ into $\mathrm{SL}_2(\mr)^n$ by $r \mapsto (\iota_1(r), \cdots, \iota_n(r))$. 
We denote the embedding of $\mathrm{SL}_2(F)$ into $\mathrm{SL}_2(\ma)$ by $\iota$. 
Let $\ma_f$ be the finite part of $\ma$ 
and $\iota_f:$ $\mathrm{SL}_2(F) \to$ $\mathrm{SL}_2(\ma_f)$ the projection of the finite part. 
The embedding of $F$ into $\ma_f$ is also denoted by $\iota_f$. 

Let $\wt{\ma}$ be the adelic metaplectic group, which is a double covering of 
$\mathrm{SL}_2(\ma)$. 
Let $\tilde{H}$ be the inverse image of a subgroup $H$ of $\mathrm{SL}_2(\ma)$ in $\wt{\ma}$. 
It is known that $\mathrm{SL}_2(F)$ can be canonically embedded into $\wt{\ma}$.
The embedding $\tilde{\iota}$ is given by $g \mapsto ([\iota_v(g)])_v$ 
for each $g \in$ $\mathrm{SL}_2(F)$. 
We define the maps $\tilde{\iota}_f:\mathrm{SL}_2(F) \to \wt{\ma_f}$ 
and $\tilde{\iota}_{\infty}:\mathrm{SL}_2(F) \to \wt{F_{\infty}}$ by 
\[
\tilde{\iota}_f(g)=([\iota_v(g)])_{v<\infty} \times ([1_2])_{v \mid \infty}, \hspace{15pt} 
\tilde{\iota}_{\infty}(g)=([1_2])_{v<\infty} \times ([\iota_i(g)])_{v \mid \infty}, 
\]
where $1_2$ is the unit matrix of size 2. 
Then we have $\tilde{\iota}(g)=\tilde{\iota}_f(g) \tilde{\iota}_{\infty}(g)$ for any 
$g \in \mathrm{SL}_2(F)$. 

Let $\Gamma \subset$ $\mathrm{SL}_2(\mo)$ be a congruence subgroup. 
A map $\mv:\Gamma \to \mac^{\times}$ is said to be a multiplier system of half-integral weight 
if $\mv(\gamma) \prod_{i=1}^n J(\iota_i(\gamma), z_i)$ is an automorphy factor 
for $\Gamma \times \mh^n$, where $J$ is the function in (\ref{capJ}). 
We obtain an equivalent condition that $\mv$ is a multiplier system of half-integral weight. 
Let $K_{\Gamma}$ be the closure of $\iota_f(\Gamma)$ in $\mathrm{SL}_2(\ma_f)$
and $\tilde{K}_{\Gamma}$ the inverse image of $K_{\Gamma}$ in $\wt{\ma_f}$. 
Let $\lambda:\tilde{K}_{\Gamma} \to \mac^{\times}$ be a genuine character. 
Put $\mv_{\lambda}(\gamma)=\lambda(\tilde{\iota}_f(\gamma))$ 
for $\gamma \in \Gamma$. 
Then $\mv_{\lambda}$ is a multiplier system of half-integral weight for $\Gamma$. 

Now suppose that $\mv:\Gamma \to \mac^{\times}$ is a multiplier system of half-integral weight. 
We obtain an equivalent condition that there exists a genuine character $\lambda:\tilde{K}_{\Gamma} \to \mac^{\times}$ such that $\mv_{\lambda}=\mv$. 
Put $K_f=\prod_{v<\infty} \mathrm{SL}_2(\mo_v)$, 
which $\mo_v$ is the ring of integers of $F_v$. \vspace{5pt}

\noindent {\bf Proposition 2.}
If $F \ne \mq$, then any multiplier system $\mv$ of half-integral weight 
of any congruence subgroup $\Gamma \subset \mathrm{SL}_2(\mo)$ is obtained 
from a genuine character of $\tilde{K}_{\Gamma}$. \vspace{5pt}

\noindent {\bf Proposition 3.}
Let $\mv$ be a multiplier system of half-integral weight for $\mathrm{SL}_2(\mo)$. 
Then there exists a genuine character 
$\lambda:\tilde{K}_f \to \mac^{\times}$ such that $\mv_{\lambda}=\mv$. \vspace{5pt}

\noindent {\bf Corollary 2.}
There exists a multiplier system $\mv$ of half-integral weight for $\mathrm{SL}_2(\mo)$ 
if and only if 2 splits completely in $F/\mq$. 
There exists a genuine character of $\wt{\mo_v}$ for any $v<\infty$, 
provided that this condition holds. \vspace{5pt}

Let $\psi: \ma/F \to \mac^{\times}$ be an additive character such that its $v$-component 
$\psi_{v}(x)$ equals $e(x)$ for any $v \mid \infty$. 
Put $\psi_{\beta}(x)=\psi(\beta x)$ and $\psi_{\beta, v}(x)=\psi_v(\beta x)$ 
for $\beta \in F^{\times}$. 
The Schwartz space of $F_v$ is denoted by $S(F_v)$. 
Let $\omega_{\psi_{\beta}, v}$ be the Weil representation of the metaplectic group $\wt{F_v}$ 
on $S(F_v)$ corresponding to $\psi_{\beta, v}$. 

In the case $v<\infty$, 
we shall determine the genuine characters of the metaplectic group $\wt{\mo_v}$. 
Let $\lambda_v$ be a genuine character of $\wt{\mo_v}$. 
The space $(\omega_{\psi_{\beta}, v}, S(F_v))^{\lambda_v}$ is defined by a set of 
$f \in S(F_v)$ such that $\omega_{\psi_{\beta}, v}(g)f=\lambda_v(g)^{-1}f$ for any $g \in \wt{\mo_v}$. 
We determine the space completely. 

In the case $v \mid \infty$, let $\lambda_v$ be a genuine character of the metaplectic group 
$\widetilde{\mathrm{SO(2)}}$, where $\mathrm{SO}(2)$ is a set of 
$\begin{pmatrix} a & b \\ -b & a \end{pmatrix} \in$ $\mathrm{SL}_2(\mr)$. 
The space $(\omega_{\psi_{\beta}, v}, S(\mr))^{\lambda_v}$ is defined by a set of 
$f \in S(\mr)$ such that $\omega_{\psi_{\beta}, v}(g)f=\lambda_v(g)f$ 
for any $g \in \widetilde{\mathrm{SO(2)}}$. 
We have an irreducible decomposition 
\[
\omega_{\psi_{\beta}, v}=\omega_{\psi_{\beta}, v}^+ \oplus \omega_{\psi_{\beta}, v}^-,
\]
where $\omega_{\psi_{\beta}, v}^+$ (resp. $\omega_{\psi_{\beta}, v}^-$) is 
an irreducible representation of the set of even (resp. odd) functions in $S(\mr)$ 
 (see \cite[Lemma 2.4.4]{LiVe1}). 
 
If $\beta<0$, there exist no lowest weight vectors of $\omega_{\psi_{\beta}, v}^+$ or 
$\omega_{\psi_{\beta}, v}^-$. 
If $\beta>0$, 
the vector $e(i \iota_v(\beta) x^2)$ (resp. $x e(i \iota_v(\beta) x^2)$) is the lowest weight vector 
of $\omega_{\psi_{\beta}, v}^+$ (resp. $\omega_{\psi_{\beta}, v}^-$) of weight 1/2 (resp. 3/2) 
(see \cite[Lemma 2.4.4]{LiVe1}). 
Let  $\lambda_{\infty, 1/2}$  be a genuine character of lowest weight 1/2 
with respect to $(\omega_{\psi_{\beta}, v}^+, S(\mr))$ and $\lambda_{\infty, 3/2}$ 
of lowest weight 3/2 with respect to $(\omega_{\psi_{\beta}, v}^-, S(\mr))$. 

Now suppose that 2 splits completely in $F/\mq$. 
The set of totally positive elements of $F$ is denoted by $F_+^{\times}$. 
Assume that $\beta \in F_+^{\times}$ in order that there exists a lowest weight vector of 
$(\omega_{\psi_{\beta}, v}^+, S(\mr))$ or  $(\omega_{\psi_{\beta}, v}^-, S(\mr))$ 
for any $v<\infty$. 
We fix $\omega_{\psi_{\beta}, v}$ and $\lambda_v$ for any $v$. 
Here, we assume that $\lambda_v=\lambda_{\infty, 1/2}$ 
or $\lambda_v=\lambda_{\infty, 3/2}$ for any $v \mid \infty$. 
Put $K=K_f \times \prod_{v \mid \infty} \mathrm{SO}(2)$. 
Let $\lambda:\tilde{K} \to \mac^{\times}$ be a genuine character such that 
its $v$-component is $\lambda_v$.
Let $S(\ma)$ be the Schwartz space of $\ma$. 
The space $(\omega_{\psi_{\beta}}, S(\ma))^{\lambda}$ is defined by a set of $\phi=\prod_v \phi_v 
\in S(\ma)$ such that $\phi_v \in (\omega_{\psi_{\beta}, v}, S(F_v))^{\lambda_v}$ for any $v$. 
We determine when there exists a nonzero $\phi \in (\omega_{\psi_{\beta}}, S(\ma))^{\lambda}$. 
We define the theta function $\Theta_{\phi}$ by 
$$\Theta_{\phi}(g)=\sum_{\xi \in F} \omega_{\psi_{\beta}}(g)\phi(\xi)$$ 
for $\phi \in S(\ma)$ and $g \in \wt{\ma}$, 
where $\omega_{\psi_{\beta}}(g)\phi(\xi)=\prod_v \omega_{\psi_{\beta}, v}(g_v)\phi_v(\xi)$. 
The product is essentially a finite product. 
If $\phi \in (\omega_{\psi_{\beta}}, S(\ma))^{\lambda}$ for $\lambda$ 
such that the lowest weight of $(\omega_{\psi_{\beta}}, S(\ma))^{\lambda}$ is 1/2 (resp. 3/2), 
then it is known that $\Theta_{\phi}$ is a Hilbert modular form of weight 1/2 (resp. 3/2). 

For $1 \le i \le n$, put $\lambda_{\infty_i}=\lambda_{\infty, w_i}$, where $w_i=1/2$ or $3/2$. 
Put $S_{\infty}=\{\infty_i \mid w_i=3/2 \}$ and $S_2=\{v<\infty \mid F_v=\mq_2\}$. 
Let $\map_v$ be the maximal ideal of $\mo_v$ and $q_v$ the order of $\mo_v/\map_v$.
Put $T_3=\{v<\infty \mid q_v=3\}$. 
We denote the order of a set $S$ by $|S|$. 
Let $\mathbf{G}$ be the set of triplets $(\beta, S_3, \maa)$ of $\beta \in F_+^{\times}$, 
a subset $S_3 \subset T_3$ and a fractional ideal $\maa$ of $F$ satisfying the conditions 
\[
|S_2|+|S_3|+|S_{\infty}| \in 2\mz 
\]
and
\[
(8\beta) \md \prod_{v \in S_3} \map_v=\maa^2,  
\]
where $\md$ is the different of $F/\mq$. 
We define an equivalence relation $\sim$ on $\mathbf{G}$ by 
\[
(\beta, S_3, \maa) \sim (\beta', S'_3, \maa') \Longleftrightarrow 
S_3=S'_3, \ \beta'=\gamma^2 \beta, \ \maa'=\gamma \maa 
\text{ for some } \gamma \in F^{\times}. 
\]

We determine when there exists a nonzero $\Theta_{\phi}$. 
Recall that if $q_v$ is odd, the double covering $\wt{F_v} \to$ $\mathrm{SL}_2(F_v)$ splits on 
$\mathrm{SL}_2(\mo_v)$. 
We denote the image of $g \in \mathrm{SL}_2(\mo_v)$ under the splitting by $[g, s(g)]$. 
Thus if $q_v$ is odd, there exists a genuine character $\epsilon_v:\wt{\mo_v} \to \mac^{\times}$ 
satisfying $\epsilon_v([g, s(g)])=1$ for any $g \in \mathrm{SL}_2(F_v)$. 
Now we set $S_3=\{v \mid q_v=3, \lambda_v \ne \epsilon_v \}$. \vspace{5pt}

\noindent {\bf Theorem 1.}
Suppose that 2 splits completely in $F/\mq$. 
Let $\beta \in F^\times_+$, $\lambda:\tilde{K} \to \mac^{\times}$ 
and $w_1, \ldots, w_n\in\{1/2, 3/2\}$ be as above.
Then there exists $\phi=\prod_v \phi_v \in (\omega_{\psi_{\beta}}, S(\ma))^{\lambda}$ 
such that $\Theta_\phi\neq 0$ if and only if 
there exists a fractional ideal $\maa$ of $F$ such that $(\beta, S_3, \maa) \in \mathbf{G}$. 
\vspace{5pt}

Put 
\[
H=\left\{\prod_{\begin{smallmatrix} {v\in T_3}\end{smallmatrix}} \map_v^{e_v} \mid 
\sum_{v \in T_3} e_v \in 2\mz \right\}. 
\]
Let $\mathrm{Cl}^+$ be the narrow ideal class group of $F$.
Put $\mathrm{Cl}^{+2}=\{\mathfrak{c}^2 \mid \mathfrak{c} \in \mathrm{Cl}^+ \}$. 
We denote the image of the group $H$ (resp. $\mathfrak{b} \in \mathrm{Cl}^+$) 
in $\mathrm{Cl}^+/\mathrm{Cl}^{+2}$ by $\bar{H}$ (resp. $[\mathfrak{b}]$). \vspace{5pt}

\noindent {\bf Theorem 2.} 
Suppose that 2 splits completely in $F/\mq$. Let $w_1, \ldots, w_n\in\{1/2, 3/2\}$ be as above.
\begin{itemize}
\item[(1)] 
Suppose that $|S_2|+|S_{\infty}|$ is even.
Then there exists $(\beta, S_3, \maa) \in \mathbf{G}$ if and only if $[\md]\in\bar{H}$. 
\item[(2)] 
Suppose that $|S_2|+|S_{\infty}|$ is odd.
Then there exists $(\beta, S_3, \maa) \in \mathbf{G}$ if and only if $T_3\neq \emptyset$ 
and $[\md \map_{v_0}]\in \bar{H}$. Here, $v_0$ is any fixed element of $T_3$.
\end{itemize} 

Now suppose that there exists $(\beta, S_3, \maa) \in \mathbf{G}$. 
Replacing $\beta$ with $\beta \gamma^2$ and $\maa$ with $(\maa \gamma)^2$ 
in (\ref{class}), respectively, we may assume $\mathrm{ord} _v \maa=0$ for $v \in S_2 \cup S_3$. 
For $v \in S_2 \cup S_3$, put 
\[
f_v(x)=\begin{cases} 1 & \text{ if } x \in 1+2\map_v \\ -1 & \text{ if } x \in -1+2\map_v \\ 
0 & \text{ otherwise.} \end{cases}
\] 
We set
\[
f=\prod_{v \in S_2 \cup S_3} f_v \times 
\prod_{v<\infty, v \notin S_2 \cup S_3} \mathrm{ch }\maa_v^{-1},
\] 
where $\maa_v=\maa \mo_v$. 
Put $\phi=f \times \prod_{i=1}^n f_{\infty, i}$, 
where $f_{\infty, i}(x)=x^{w_i-(1/2)} e(i\iota_i(\beta) x^2)$ for $x \in \mr$ and $w_i \in \{1/2, 3/2\}$. 
By Theorem \ref{main1}, there exists $\Theta_{\phi} \ne 0$ of weight $w=(w_1, \cdots, w_n)$. 
 \vspace{5pt}

\noindent {\bf Theorem 3.} 
Let $\phi$ and $\Theta_{\phi}$ be as above. 
We define a theta function $\theta_{\phi}: \mh^n \to \mac$ by 
\[
\theta_{\phi}(z)=\sum_{\xi \in \maa^{-1}} f(\iota_f(\xi)) 
\prod_{\infty_i \in S_{\infty}} \iota_i(\xi) \prod_{i=1}^n e(z_i \iota_i(\beta \xi^2)). 
\] 
for $z=(z_1, \cdots, z_n) \in \mh$. 
Then $\theta_{\phi}$ is a nonzero Hilbert modular form of weight $w$ 
for $\mathrm{SL}_2(\mo)$ with respect to a multiplier system. 
Every theta function of weight $w$ for $\mathrm{SL}_2(\mo)$ with a multiplier system 
may be obtained in this way. 

In particular, when $F=\mq$, we obtain $\eta(z)$ and $\eta^3(z)$ 
as $\theta_{\phi}(z)$ up to constant. 

This paper is organized as follows. 
In Section \ref{local}, we determine the number of the genuine characters of the metaplectic 
group $\wt{\mo}$, 
where $\mo$ is the ring of integers of a finite extension $F$ of $\mq_p$. 
Moreover, we determine the dimension of a space $(\omega_{\psi_{\beta}}, S(F))^{\lambda}$ 
for a genuine character $\lambda$ of $\wt{\mo}$ and the Weil 
representation $\omega_{\psi_{\beta}}$ of $\wt{F}$ on $S(F)$. 
In Section \ref{system}, 
we study the multiplier systems of half-integral weight of a congruence subgroup of 
$\mathrm{SL}_2(\mo)$, where $\mo$ is the ring of integers of a totally real number field $F$. 
In Section \ref{main}, we define theta functions $\Theta_{\phi}$ of $\wt{\ma}$ 
and prove our main theorems. 
Moreover, we obtain theta functions $\theta_{\phi}(z)$ of $\mh^n$ and 
determine the number of the equivalence classes of the set $\mathbf{G}$. 
In Section \ref{quad}, we give some examples in the case $F=\mq$ or $F$ is a real quadratic field. 
\vspace{5pt}

\noindent {\bf Acknowledgment.}
The author thanks his supervisor Tamotsu Ikeda for suggesting the problem and for his helpful
advice, and thanks Masao Oi and Shuji Horinaga for their sincere and useful comments.

\section{Local theory, genuine characters of $\wt{\mo}$} \label{local} 
Let $F$ be a finite extension of $\mq_p$ until the end of this section. 
Let $\mo$ be the ring of integers of $F$ and $\map$ the maximal ideal of $\mo$. 
Let $q$ be the order of the residue field $\mo/\map$ and $\md$ the different of $F/\mq_p$. 

For $g=\begin{pmatrix}a & b \\ c & d \end{pmatrix} \in \mathrm{SL}_2(F)$, 
put $x(g)=c$ if $c \ne 0$ and $x(g)=d$ if $c=0$. The Kubota 2-cocycle on 
$\mathrm{SL}_2(F)$ is defined by $c(g, h)=\langle x(g)x(gh), x(h)x(gh) \rangle_F$ 
for $g, h \in \mathrm{SL}_2(F)$, where $\langle \cdot, \cdot \rangle_F$ 
is the quadratic Hilbert symbol for $F$. 
Let $\wt{F}$ be the metaplectic group of $\mathrm{SL}_2(F)$. Set-theoretically, it is 
\[
\{[g, \tau] \mid g \in \mathrm{SL}_2(F), \tau \in \{\pm 1\} \}. 
\]
Its multiplication law is given by $[g, \tau][h, \sigma] = [gh, \tau \sigma c(g, h)]$. 
This is a nontrivial double covering group of $\mathrm{SL}_2(F)$. 
Put $[g]=[g, 1]$.
For a subgroup $H$ of $\mathrm{SL}_2(F)$, 
the inverse image of $H$ in $\wt{F}$ is denoted by $\tilde{H}$. 
A function $\epsilon_F:\wt{\mo} \to \mac$ is genuine 
if $\epsilon_F([1_2, -1]\gamma)=-\epsilon_F(\gamma)$ for any $\gamma \in \wt{\mo}$. 

We determine the number of the genuine characters of $\wt{\mo}$. 
For $a \in F^{\times}, \tau=\pm1$ and $b, c \in F$, put 
\[
m(a, \tau)=\left[\begin{pmatrix} a & 0 \\ 0 & a^{-1} \end{pmatrix}, \tau \right], \hspace{15pt} 
u^+(b)=\left[\begin{pmatrix} 1 & b \\ 0 & 1 \end{pmatrix}\right], 
\]
\[ 
u^-(c)=\left[\begin{pmatrix}1 & 0 \\ c & 1\end{pmatrix}\right], \hspace{15pt} 
N=\left[\begin{pmatrix} 0 & -1 \\ 1 & 0 \end{pmatrix}\right]. 
\]

For $k \in \mz$ such that $k \ge 0$, we define the subgroups $U^+(\map^k)$, $U^-(\map^k)$ 
and $\tilde{A}$ of $\wt{\mo}$ by  
$U^+(\map^k)=\{u^+(b) \mid b \in \map^k\}$, $U^-(\map^k)=\{u^-(c) \mid c \in \map^k\}$ 
and $\tilde{A}=\{m(a, \tau) \mid a \in \mo^{\times} \}$, respectively. 
Note that $\wt{\mo}$ is generated by $U^+(\mo)$, $N$ and $m(1, -1)$. 

\begin{lem} \label{order}
Put $M=$ min$\{\mathrm{ord}(a^2-1) \mid a \in \mo^{\times}\}$. Then we have 
$$M= \begin{cases} 0 & (q \ge 4) \\ 1 & (q=3) \\ 
2 & (q=2, F \ne \mq_2) \\ 3 & (F=\mq_2). \end{cases}$$ \end{lem}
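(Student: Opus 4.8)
The plan is to compute $M = \min\{\mathrm{ord}(a^2-1) \mid a \in \mo^\times\}$ directly, case by case according to the residue cardinality $q$, by analyzing when $a^2 \equiv 1$ can hold modulo successive powers of $\map$. The basic identity to exploit throughout is the factorization $a^2 - 1 = (a-1)(a+1)$, so that $\mathrm{ord}(a^2-1) = \mathrm{ord}(a-1) + \mathrm{ord}(a+1)$. First I would dispose of the case $q \ge 4$: since the residue field $\mo/\map$ has at least four elements, it contains an element whose image is neither $0$, $1$, nor $-1$, hence there is a unit $a \in \mo^\times$ with $a \not\equiv \pm 1 \pmod{\map}$, giving $\mathrm{ord}(a-1) = \mathrm{ord}(a+1) = 0$ and thus $M = 0$.

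Next I would treat $q = 3$. Here every unit satisfies $a \equiv \pm 1 \pmod{\map}$, so $\mathrm{ord}(a^2 - 1) \ge 1$ always, giving $M \ge 1$; and since $2$ is a unit (as $p \ne 2$ when $q = 3$), for $a \equiv 1 \pmod{\map}$ we have $\mathrm{ord}(a+1) = \mathrm{ord}(2) = 0$, so choosing $a = 1$ itself gives $\mathrm{ord}(a^2-1) = \infty$ — that is wrong, so instead I choose a unit $a$ with $a - 1$ a uniformizer: such $a$ exists (e.g. $a = 1 + \pi$ provided $1 + \pi$ is a unit, which it is since $1+\pi \equiv 1 \pmod \map$), and then $\mathrm{ord}(a^2-1) = 1 + 0 = 1$, so $M = 1$. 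For $q = 2$ with $F \ne \mq_2$: now $2$ lies in $\map$, and $\mathrm{ord}(2) = e$, the absolute ramification index, with $e \ge 2$ precisely when $F \ne \mq_2$ is ramified over $\mq_2$; but one must be careful, since $F \ne \mq_2$ also allows unramified extensions where $e = 1$. I would instead argue: every unit $a$ has $a \equiv 1 \pmod \map$ (as the residue field is $\mathbb{F}_2$), so write $a = 1 + \pi^k u$ with $u \in \mo^\times$, $k \ge 1$; then $a + 1 = 2 + \pi^k u$. The key sub-claim is $\min_{a}\bigl(\mathrm{ord}(a-1) + \mathrm{ord}(a+1)\bigr) = 2$, achieved by balancing: taking $k = 1$ gives $\mathrm{ord}(a-1) = 1$ and $\mathrm{ord}(a+1) = \min(\mathrm{ord}(2), 1)$; if $e \ge 2$ this is $1$, total $2$; if $e = 1$ (unramified, $F \ne \mq_2$ so $f \ge 2$, but wait — then $q = 2^f \ge 4$, contradiction) — so in fact $q = 2$ forces $f = 1$, hence $e = [F:\mq_2] \ge 2$ when $F \ne \mq_2$, and the total is exactly $2$, while $k \ge 2$ only makes things worse on the $(a-1)$ side unless it improves the $(a+1)$ side, which it cannot below $\mathrm{ord}(2) \ge 2$. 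So $M = 2$.

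Finally, for $F = \mq_2$: units are $a \in \mz_2^\times$, i.e. $a$ odd, so $a \equiv 1$ or $3 \pmod 8$ up to squares — more simply, $a = 1 + 2k$ with $k \in \mz_2$, and $a^2 - 1 = 4k(k+1)$, where $k(k+1)$ is always even, giving $\mathrm{ord}_2(a^2 - 1) \ge 3$; equality holds for $k$ odd, e.g. $a = 3$ gives $a^2 - 1 = 8$, $\mathrm{ord}_2 = 3$. Hence $M = 3$. The main obstacle, and the place requiring the most care, is the $q = 2$, $F \ne \mq_2$ case: one must correctly observe that $q = 2$ forces the residue degree $f = 1$ and hence $F/\mq_2$ is totally ramified of degree $\ge 2$, so that $\mathrm{ord}(2) = e \ge 2$, and then verify that the optimal balance of the two factors $(a-1)$, $(a+1)$ yields exactly $2$ and cannot be pushed lower. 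Everything else is a short direct computation once the right representative unit $a$ is chosen in each case.
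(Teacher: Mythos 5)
Your proof is correct and follows essentially the same route as the paper: a case analysis on $q$ via the factorization $a^2-1=(a-1)(a+1)$, with the same witness $a=1+\pi$ in the cases $q=3$ and $q=2$, $F\ne\mq_2$ (where you usefully make explicit that $q=2$ forces $f=1$, hence $e=\mathrm{ord}(2)\ge 2$, a fact the paper uses silently), and you even supply the computation for $F=\mq_2$ that the paper dismisses as well-known. The only blemishes are cosmetic: the momentary detour through $a=1$ (harmless, since $\mathrm{ord}(0)=\infty$ does not affect the minimum) and the claim that equality holds for all odd $k$ in the $\mq_2$ case (false for $k=3$, but irrelevant since $a=3$ already attains $\mathrm{ord}_2(a^2-1)=3$).
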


\begin{proof} 
Let $\pi$ be a prime element of $F$. 
If $q \ge 4$, then there exists $a \in \mo^{\times}$ such that $a^2-1 \in \mo^{\times}$. 
Thus we have $M=0$. 
If $q=3$, then $a^2-1 \in \map$ for any $a \in \mo^{\times}$. 
Since $(\pi+1)^2-1=\pi(\pi+2) \notin \map^2$, we have $M=1$. 

In the case $q=2$,  $a^2-1=(a-1)(a+1) \in \map^2$ for any $a \in \mo^{\times}$. 
If $F \ne \mq_2$, then we have 2 $\in \map^2$. 
Since $(\pi+1)^2-1=\pi(\pi+2) \notin \map^3$, we have $M=2$. 
It is well-known that $M=3$ if $F=\mq_2$.  \end{proof}

The derived group of a group $G$ is denoted by $D(G)$. 
Since $[m(a, \tau), u^+(b)]=u^+((a^2-1)b)$ and $[m(a, \tau), u^-(c)]=u^-((a^{-2}-1)c)$ hold, 
we have 
\begin{equation} \label{sub} U^+(\map^M), U^-(\map^M) \subset D(\wt{\mo})\end{equation} 
by Lemma \ref{order}. 

\begin{lem} \label{Hsym} 
Suppose that $q$ is even. Then there exists $r \in 1+4 \mo$ such that 
$\langle r, x \rangle_F=(-1)^{\mathrm{ord }x}$ for $x \in F^{\times}$. \end{lem}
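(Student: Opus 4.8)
The plan is to produce $r$ from the unramified quadratic extension of $F$ (which has residue characteristic $2$ since $q$ is even) and to read off the Hilbert symbol values from local class field theory. Recall the norm–residue description of $\langle\cdot,\cdot\rangle_F$: for $r\in F^\times$ not a square and $x\in F^\times$, one has $\langle r,x\rangle_F=1$ iff $x\in N_{E/F}(E^\times)$ with $E=F(\sqrt r)$. So it suffices to find $r\in 1+4\mo$ such that $F(\sqrt r)$ is the unramified quadratic extension $E$ of $F$: for such $E$ we have $N_{E/F}(\mo_E^\times)=\mo^\times$ (the residue norm $\mathbb{F}_{q^2}^\times\to\mathbb{F}_q^\times$ is surjective) and $N_{E/F}(\pi)=\pi^2$ for a uniformizer $\pi$ of $F$, hence $N_{E/F}(E^\times)=\{x\in F^\times:\mathrm{ord}\,x\in 2\mz\}$, which gives exactly $\langle r,x\rangle_F=(-1)^{\mathrm{ord}\,x}$.

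To construct $r$, I would use Artin–Schreier theory on the residue field $\mo/\map\simeq\mathbb{F}_q$. Since the trace $\mathrm{Tr}_{\mathbb{F}_q/\mathbb{F}_2}\colon\mathbb{F}_q\to\mathbb{F}_2$ is surjective, choose $c\in\mathbb{F}_q$ with $\mathrm{Tr}_{\mathbb{F}_q/\mathbb{F}_2}(c)=1$; then $c\notin\wp(\mathbb{F}_q)$ with $\wp(t)=t^2+t$, so $X^2+X+c$ is irreducible over $\mathbb{F}_q$ and generates $\mathbb{F}_{q^2}$. Lift $c$ to $\tilde c\in\mo$ and set $r=1-4\tilde c\in 1+4\mo$, which is a unit. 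A root $\theta$ of $X^2+X+\tilde c$ satisfies $(2\theta+1)^2=1-4\tilde c=r$, so $\sqrt r=2\theta+1\in F(\theta)$ and conversely $\theta=(\sqrt r-1)/2\in F(\sqrt r)$; hence $F(\sqrt r)=F(\theta)$. Because the reduction mod $\map$ of $X^2+X+\tilde c$ is separable and irreducible over $\mathbb{F}_q$, Hensel's lemma shows $X^2+X+\tilde c$ has a root in the unramified quadratic extension of $F$ and no root in $F$, so $F(\theta)$ is precisely that unramified quadratic extension $E$. Combining with the previous paragraph yields $\langle r,x\rangle_F=(-1)^{\mathrm{ord}\,x}$ for all $x\in F^\times$, as required.

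The only point that needs real care is the identification of $F(\sqrt r)$ with the \emph{unramified} quadratic extension: one must make the completing-the-square step rigorous (done above, using that $2\theta+1$ and $(\sqrt r-1)/2$ are mutually inverse expressions) and check that the reduction-mod-$\map$ argument pins down $E$ as the unramified extension and not merely as some quadratic extension. One could alternatively avoid class field theory by arguing directly with the structure of $\mo^\times/(\mo^\times)^2$ and the standard fact $1+\map^{2e+1}\subset(F^\times)^2$, but the route above is cleaner. Everything else is routine bookkeeping with valuations and the norm group of an unramified extension.
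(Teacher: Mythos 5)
Your proof is correct and follows essentially the same route as the paper: both construct $r\in 1+4\mathfrak{o}$ so that $F(\sqrt r)$ is the unramified quadratic extension, using an Artin--Schreier-type element of the residue field not in the image of $t\mapsto t^2\pm t$ and the substitution $\theta=(\sqrt r-1)/2$ to identify the two extensions. The only difference is cosmetic: you phrase the choice of $c$ via surjectivity of $\mathrm{Tr}_{\mathbb{F}_q/\mathbb{F}_2}$ and spell out the norm-group computation $N_{E/F}(E^\times)=\{x:\mathrm{ord}\,x\in 2\mathbb{Z}\}$, which the paper leaves implicit.
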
 

\begin{proof} 
Put $r=1+4c$ for $c \in \mo$. 
We show that there exists $c$ such that $F(\sqrt{r})/F$ is an unramified quadratic extension. 
We denote the residue field of a local field $L$ by $k(L)$ 
and the image of an element $u$ of the ring of integers of $F(\sqrt{r})$ 
in $k(F(\sqrt{r}))$ by $\bar{u}$. 

We define a map $\mathbf{p}:k(F) \to k(F)$ by $\mathbf{p}(t)=t^2-t$ for $t \in k(F)$. 
We have $\mathbf{p}(t)=\mathbf{p}(1-t) \ne \mathbf{p}(s)$ 
for any $s \in k(F) \backslash \{t, 1-t\}$. 
Since $[k(F):\mathbf{p}(k(F))]=2$, there exists $c$ such that $\bar{c} \notin \mathbf{p}(k(F))$. 
Then it is known that a polynomial $X^2-X-\bar{c}$ is irreducible over $k(F)$. 
Put $y=(1-\sqrt{r})/2$ and $f(X)=X^2-X-c \in \mo[X]$. 
Since $f(y)=0$ and $f'(y)=2y-1=-\sqrt{r} \ne 0$, $k(F)(\bar{y})/k(F)$ is a quadratic extension 
%because S. Lang said as follows in [] Chapter VI, Theorem 6.4. 
and $k(F(\sqrt{r}))=k(F(y))$ equals $k(F)(\bar{y})$. 
Therefore $F(\sqrt{r})/F$ is an unramified quadratic extension. 
%because O'Meara said as follows in [] 32.6. 
\end{proof} 

\begin{lem} \label{noexi} 
Suppose that $q$ is even and $F \ne \mq_2$. 
Then there exist no genuine characters of $\wt{\mo}$. 
\end{lem}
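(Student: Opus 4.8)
The plan is to argue by contradiction: suppose a genuine character $\epsilon_F\colon\wt{\mo}\to\mac^{\times}$ exists (equivalently, we are showing $[1_2,-1]\in D(\wt{\mo})$, since a genuine character is exactly a homomorphism $\wt{\mo}\to\mac^{\times}$ sending $[1_2,-1]$ to $-1$, and $\mac^{\times}$ is divisible), and produce a contradiction with $\epsilon_F([1_2,-1])=-1$. First I would record what the hypothesis buys us. Since $q$ is even and $F\ne\mq_2$, Lemma~\ref{order} forces $M\in\{0,2\}$; in particular $M\le 2$, and if $M=2$ then $q=2$, so $f=1$ and $e=[F:\mq_2]\ge 2$, whence $2\in\map^{e}\subset\map^{M}$. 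The maps $u^{\pm}\colon(\mo,+)\to\wt{\mo}$ are homomorphisms ($u^{+}$ because the relevant Kubota cocycle is trivial on upper unipotents, $u^{-}$ because $u^{-}(c)=Nu^{+}(-c)N^{-1}$), so $\mu(b):=\epsilon_F(u^{+}(b))$ and $\mu^{-}(c):=\epsilon_F(u^{-}(c))$ are characters of $(\mo,+)$; by (\ref{sub}) they are trivial on $\map^{M}$. In every case $2t\in\map^{M}$ for $t\in\mo^{\times}$, so $\mu(2t)=\mu(t)^{2}=1$.

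The core will be a short computation with the Kubota $2$-cocycle. For $t\in\mo^{\times}$ set $w(t)=u^{+}(t)u^{-}(-t^{-1})u^{+}(t)$. Evaluating the cocycles (all of which reduce to Hilbert symbols of squares) one gets $w(t)=\left[\begin{pmatrix}0&t\\-t^{-1}&0\end{pmatrix}\right]$ and hence $w(t)\,w(1)^{-1}=m\!\left(t,\langle -1,t\rangle_F\right)$, together with the analogous $m(a,1)m(b,1)=m\!\left(ab,\langle a,b\rangle_F\right)$. Writing $\chi(t)=\epsilon_F(m(t,1))$ (so $\epsilon_F(m(t,\tau))=\tau\chi(t)$ by genuineness) and applying $\epsilon_F$ to the first identity, the factors $\mu(2t)=\mu(2)=1$ drop out and one is left with
\[
\langle -1,t\rangle_F\,\chi(t)=\mu^{-}(1-t^{-1}),\qquad t\in\mo^{\times}.
\]

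Next I would feed this into the torus relation: applying $\epsilon_F$ to $m(a,1)m(b,1)=m(ab,\langle a,b\rangle_F)$ gives $\chi(a)\chi(b)=\langle a,b\rangle_F\,\chi(ab)$; substituting $\chi(t)=\langle -1,t\rangle_F\,\mu^{-}(1-t^{-1})$, cancelling the $\langle -1,\cdot\rangle_F$ factors by bimultiplicativity, and using $(1-a^{-1})+(1-b^{-1})-(1-(ab)^{-1})=(1-a^{-1})(1-b^{-1})$, everything collapses to
\[
\langle a,b\rangle_F=\mu^{-}\!\left((1-a^{-1})(1-b^{-1})\right),\qquad a,b\in\mo^{\times}.
\]
I then claim the right-hand side is always $1$. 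If $M=0$ this is immediate, as $\mu^{-}$ is trivial on all of $\mo$. If $M=2$ then $q=2$, so every unit is $\equiv 1\pmod\map$, hence $1-a^{-1}=(a-1)a^{-1}\in\map$ and $(1-a^{-1})(1-b^{-1})\in\map^{2}=\map^{M}$, where $\mu^{-}$ is trivial. Thus $\langle a,b\rangle_F=1$ for all $a,b\in\mo^{\times}$, i.e.\ the image of $\mo^{\times}$ in $F^{\times}/(F^{\times})^{2}$ is totally isotropic for the Hilbert symbol. But $\langle\cdot,\cdot\rangle_F$ is a nondegenerate symmetric $\mathbb{F}_2$-bilinear form on the space $F^{\times}/(F^{\times})^{2}$ of dimension $[F:\mq_2]+2$, while $\mo^{\times}/(\mo^{\times})^{2}$ has dimension $[F:\mq_2]+1>\tfrac12([F:\mq_2]+2)$, so it cannot be contained in its own orthogonal complement --- a contradiction. (By Lemma~\ref{Hsym} that complement is exactly the line spanned by $r$, so the obstruction is sharp.)

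The hard part is the cocycle identity $w(t)w(1)^{-1}=m(t,\langle -1,t\rangle_F)$ together with the realization that the two ``error terms'' appearing when $\epsilon_F$ is applied --- the argument $2t$ in the second step and the argument $(1-a^{-1})(1-b^{-1})$ in the third --- both land in $\map^{M}$ precisely because $F\ne\mq_2$. This is the one place the hypothesis is genuinely used: for $F=\mq_2$ one has $M=3$ and both terms may escape $\map^{M}$, which is why genuine characters survive there. Once this is in hand, the values of $\epsilon_F$ on the unipotents and on the torus are completely eliminated in favour of the Hilbert symbol, and the contradiction with nondegeneracy is automatic.
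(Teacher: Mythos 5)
Your argument is correct, and it takes a genuinely different route from the paper's proof. The paper works directly inside the derived group: starting from $U^{\pm}(\map^{M})\subset D(\wt{\mo})$, it uses the explicit commutator $[u^-(c),u^+(b)]$ of (\ref{comm}) to place elements $m(r,\zeta)$ in $D(\wt{\mo})$, and then splits into cases --- in the ramified case it invokes Lemma \ref{Hsym} and the substitution $b\mapsto b\pi$, $c\mapsto c\pi^{-1}$ to flip the sign $\zeta$, while in the unramified case it uses $\langle 1-b,b\rangle_F=1$ together with the existence of a unit $u$ with $\langle u,-1\rangle_F=-1$ (coming from the ramifiedness of $F(\sqrt{-1})/F$) --- in either case concluding $m(1,-1)\in D(\wt{\mo})$. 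You instead assume a genuine character exists and extract from the Bruhat-type relation $w(t)w(1)^{-1}=m(t,\langle-1,t\rangle_F)$ and the torus cocycle $m(a,1)m(b,1)=m(ab,\langle a,b\rangle_F)$ the closed identity $\langle a,b\rangle_F=\mu^-\bigl((1-a^{-1})(1-b^{-1})\bigr)$, whose right-hand side is killed by $U^-(\map^M)\subset D(\wt{\mo})$ precisely when $F\ne\mq_2$; the contradiction then comes from nondegeneracy of the Hilbert symbol plus the dimension count $\dim_{\mathbb{F}_2}\mo^\times/(\mo^\times)^2=d+1>\tfrac12(d+2)$. I checked the cocycle computations ($w(t)$ carries trivial sign, $c(g,h)=\langle a,b\rangle_F$ on the torus, $u^-=Nu^+(-\cdot)N^{-1}$ is a homomorphism) and the telescoping $(1-a^{-1})+(1-b^{-1})-(1-(ab)^{-1})=(1-a^{-1})(1-b^{-1})$; all are right. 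What your approach buys: it is uniform in the ramified and unramified cases (the only case distinction is verifying $2t$ and $(1-a^{-1})(1-b^{-1})$ lie in $\map^M$), it avoids both Lemma \ref{Hsym} and the construction of explicit non-norm units, and it isolates exactly why $F=\mq_2$ escapes ($M=3$ lets both error terms out of $\map^M$). What it costs is reliance on two standard but unproved-in-the-paper facts, the nondegeneracy of $\langle\cdot,\cdot\rangle_F$ on $F^\times/(F^\times)^2$ and the formula for its $\mathbb{F}_2$-dimension, whereas the paper's argument is self-contained modulo Lemma \ref{Hsym}.
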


\begin{proof} 
Let $b, c \in \mo$ such that $r=1-bc \in \mo^{\times}$ and $\zeta=\langle r, b \rangle_F$. 
We have 
\begin{equation} \label{comm} 
[u^-(c), u^+(b)]=\left[\begin{pmatrix}r & b^2 c \\ -bc^2 & 1+bc+b^2 c^2 \end{pmatrix}\right]
=u^-(-r^{-1}bc^2)m(r, \zeta)u^+(r^{-1}b^2c). \end{equation} 

When $F/\mq_2$ is a ramified extension, we assume that $b, c \in 2\mo$ 
such that $r$ satisfies the condition in Lemma \ref{Hsym}. 
We have $U^+(\map^2), U^-(\map^2) \subset D(\wt{\mo})$ by (\ref{sub}). 
Then we have $m(r, \zeta) \in D(\wt{\mo})$ by (\ref{comm}). 
Let $\pi$ be a prime element of $F$. 
Set $b'=b\pi$ and $c'=c\pi^{-1}$, which lie in $\map$. 
We have $\langle 1-b'c', b' \rangle_F=\langle r, b\pi \rangle_F=-\zeta$. 
Thus we have $m(1, -1) \in D(\wt{\mo})$ and there exist no genuine characters of $\wt{\mo}$. 

Assume that $F/\mq_2$ is an unramified extension and that $F \ne \mq_2$. 
We have $U^+(\mo)$, $U^-(\mo) \subset D(\wt{\mo})$ by (\ref{sub}). 
Substituting $1$ for $c$ in (\ref{comm}), we have $m(1-b, \zeta)$ $\in D(\wt{\mo})$, 
whenever $1-b \in \mo^{\times}$. 
Since $\zeta=\langle 1-b, b \rangle_F$ equals 1, we have $m(1-b, 1) \in D(\wt{\mo})$. 
Similarly, substituting $-1$ for $c$ and replacing $b$ with $-b$ in Equation (\ref{comm}), 
we have $m(1-b, \langle 1-b, -1 \rangle_F) \in D(\wt{\mo})$. 
Thus it suffices to show that there exists $b$ such that $\langle 1-b, -1 \rangle_F$ equals $-1$. 

Since $F/\mq_2$ is unramified, $F(\sqrt{-1})/F$ is a ramified extension. 
Thus there exists $u \in \mo^{\times}$ such that $\langle u, -1 \rangle_F=-1$. 
%(see O' Meara [] $63. 13$). Given any non-square $\beta \in F^{\times}$, there is an $\alpha$ in $F^{\times}$ such that $\langle \alpha, \beta \rangle_F=-1$. 
Since $[\mo^{\times}:1+\map]=q-1$ is odd, we may assume that $u \in 1+\map$. 
Then there exists $b \in \map$ such that $u=1-b$ satisfies $\langle u, -1 \rangle_F=-1$. 
\end{proof} 

An additive character $\me_p$ of $\mq_p$ is defined by 
$\me_p(x)=e(-x)$ for any $x \in \mz[1/p]$. 
We define a nontrivial additive character $\psi_{\beta}$ of $F$ 
by $x \mapsto \me_p($Tr$_{F/\mq_p}(\beta x))$ for $\beta \in F^{\times}$. 
The order of $\psi_{\beta}$ is denoted by $\mathrm{ord} \psi_{\beta} \in \mz$, 
which is defined by $\psi_{\beta}(\map^{-\mathrm{ord}\psi_{\beta}})=1$ 
and $\psi_{\beta}(\map^{-\mathrm{ord}\psi_{\beta} -1}) \ne 1$. 
We have $\mathrm{ord} \psi_{\beta}=\mathrm{ord} \md+ \mathrm{ord} \beta$.

Let $S(F)$ be the Schwartz space of $F$. 
The Fourier transformation $\hat{\phi}$ of $\phi \in S(F)$ is defined by 
$\hat{\phi}(x)=\int_{F} \phi(y) \psi_{\beta}(xy)dy$. 
Here, $dy$ is self-dual on the Fourier transformation. 
In other words, $dy$ is the Haar measure such that the Plancherel's
formula $\int_{F} |\phi(y)|^2 dy=\int_{F} |\hat{\phi}(y)|^2 dy$ holds, 
where $|\cdot|$ is the absolute value on $F$. 

We denote the characteristic function of a subset $A$ of a set $X$ by $\mathrm{ch~}A$. 
In the case $F=\mq_p$, we have the volume vol$(p^m \mz_p)$ of $p^m \mz_p$ equals 
$p^{-m-(\mathrm{ord }\beta/2)}$ and $\widehat{\mathrm{ch~}p^m \mz_p}=
$ vol$(p^m \mz_p)\, \mathrm{ch~} p^{-(m+\mathrm{ord }\beta)} \mz_p$ for $m \in \mz$. 

Put $A_{\phi}=\int_{F} \phi(x)\psi_{\beta}(ax^2)dx$ 
and $B_{\phi}=\int_{F} \hat{\phi}(x)\psi_{\beta}(-x^2/4a)dx$ 
for $a \in F^{\times}$ and $\phi \in S(F)$. 
There exists a constant $\alpha_{\psi_{\beta}}(a) \in \mac$ called the Weil constant 
such that $A_{\phi}=\alpha_{\psi_{\beta}}(a)|2a|^{-1/2}B_{\phi}$ holds. 
It is known that $\alpha_{\psi_{\beta}}(ab^2)=\alpha_{\psi_{\beta}}(a)$ 
for $a, b \in F^{\times}$ 
and that $\alpha_{\psi_{\beta}}(a)=\alpha_{\psi}(a \beta)$, 
where $\psi=\psi_1$. 
Moreover, we have $\alpha_{\psi_{\beta}}(-a)=\overline{\alpha_{\psi_{\beta}}(a)}$ 
and $\alpha_{\psi_{\beta}}(a)^8=1$. 

The Weil representation $\omega_{\psi_{\beta}}$ is a representation of $\wt{F}$ on $S(F)$. 
%Note that $\omega_{\psi_{\beta}}$ is unitary with respect to an inner product of $(\phi_{\iota}, \phi_{\nu})=\int_{F_v} \phi_{\iota}(x) \overline{\phi_{\nu}(x)} dx$, where $dx$ is a Haar measure on $F_v$. 
For $\phi \in S(F)$, we have 
\begin{equation} \label{gexam} \begin{cases} 
\omega_{\psi_{\beta}}(m(a, \tau))\phi(x)=
\tau \alpha_{\psi_{\beta}}(1)\alpha_{\psi_{\beta}}(-a)|a|^{1/2}\phi(ax) \\  
\omega_{\psi_{\beta}}(u^+(b))\phi(x)=\psi_{\beta}(bx^2)\phi(x) \\ 
\omega_{\psi_{\beta}}(N)\phi(x)=|2|^{1/2}\alpha_{\psi_{\beta}}(-1) \hat{\phi}(-2x). \end{cases} \end{equation}
Since $\wt{F}$ is generated by the above elements, 
$\omega_{\psi_{\beta}}$ is determined by these formulas. 
In particular, we have $\omega_{\psi_{\beta}}([g, \tau])\phi=\tau \omega_{\psi_{\beta}}([g])\phi$. 

We define a map $s:$ $\mathrm{SL}_2(\mo) \to \{\pm 1\}$ by 
\begin{equation} \label{split} 
s(g)=\begin{cases} 1 & c \in \mo^{\times} \\
\langle c, d \rangle_F & c \in \map \backslash\{0\} \\ 
\langle -1, d \rangle_F & c=0 \end{cases} \hspace{15pt} 
\text{for } g=\begin{pmatrix} a & b \\ c & d \end{pmatrix} \in \mathrm{SL}_2(\mo). \end{equation} 
If $q$ is odd, we have 
\[
s(g)=\begin{cases} 1 & cd=0 \\
\langle c, d \rangle_F^{\mathrm{ord}\,c} & cd \ne 0. \end{cases}
\]
Recall that the double covering $\wt{F} \to$ $\mathrm{SL}_2(F)$ splits on $\mathrm{SL}_2(\mo)$ 
if and only if $q$ is odd. 
The splitting is given by $g \mapsto [g, s(g)]$. 
Thus if $q$ is odd, a map $\epsilon_F:\wt{\mo} \to \mac^{\times}$ 
defined by $\epsilon_F([g, \tau])=\tau \, s(g)$ is a genuine character. 
If $\mathrm{ord} \psi_{\beta}=0$, we have 
$\omega_{\psi_{\beta}}([g, \tau])\, \mathrm{ch~}\mo$ = 
$\epsilon_F([g, \tau])^{-1} \mathrm{ch~}\mo$. 

\begin{lem} \label{eps} 
Suppose that $q$ is odd. 
Then there exists a genuine character $\epsilon_F: \wt{\mo} \to \mac^{\times}$ such that 
$\omega_{\psi_{\beta}}([g, \tau]) \mathrm{ch~}\mo=\epsilon_F([g, \tau])^{-1} 
\mathrm{ch~}\mo$ if $\mathrm{ord} \psi_{\beta}=0$. 
If $q \ge 5$, then it is a unique genuine character of $\wt{\mo}$. \end{lem}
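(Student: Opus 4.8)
The plan is to treat the two assertions separately. The first is essentially the statement already made in the discussion preceding the lemma, so I will only indicate its verification; the second, uniqueness for $q\ge 5$, is where the hypothesis is genuinely used, and is the part I would write out in full.

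For existence, put $\epsilon_F([g,\tau])=\tau\,s(g)$ with $s$ as in (\ref{split}). This is a well-defined character because, as recalled above, $g\mapsto[g,s(g)]$ is a homomorphism splitting $\wt{F}\to\mathrm{SL}_2(F)$ over $\mathrm{SL}_2(\mo)$ when $q$ is odd, and it is genuine since $\epsilon_F([1_2,-1])=-s(1_2)=-1$. For the identity when $\mathrm{ord}\,\psi_\beta=0$, the formula $\omega_{\psi_\beta}([g,\tau])=\tau\,\omega_{\psi_\beta}([g])$ reduces it to $\omega_{\psi_\beta}([g])\,\mathrm{ch~}\mo=s(g)\,\mathrm{ch~}\mo$ for $g\in\mathrm{SL}_2(\mo)$, i.e. to $\omega_{\psi_\beta}([g,s(g)])\,\mathrm{ch~}\mo=\mathrm{ch~}\mo$; since $g\mapsto\omega_{\psi_\beta}([g,s(g)])$ is a representation of $\mathrm{SL}_2(\mo)$ on $S(F)$, it is enough to check this on generators. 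As $\wt{\mo}$ is generated by $U^+(\mo)$, $N$, $m(1,-1)$, with $s=1$ on the matrices underlying $U^+(\mo)$ and $N$ and with $m(1,-1)$ projecting to $1_2$, it suffices that $\omega_{\psi_\beta}(u^+(b))\,\mathrm{ch~}\mo=\mathrm{ch~}\mo$ for $b\in\mo$ and $\omega_{\psi_\beta}(N)\,\mathrm{ch~}\mo=\mathrm{ch~}\mo$. The former holds by (\ref{gexam}) since $\psi_\beta(bx^2)=1$ for $x\in\mo$; the latter reads $|2|^{1/2}\alpha_{\psi_\beta}(-1)\,\widehat{\mathrm{ch~}\mo}(-2x)=\mathrm{ch~}\mo(x)$, which holds because $2\in\mo^\times$, $\widehat{\mathrm{ch~}\mo}=\mathrm{ch~}\mo$ (self-duality forces $\mathrm{vol}(\mo)=1$), and $\alpha_{\psi_\beta}(-1)=1$.

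For uniqueness, assume $q\ge 5$, so that $M=0$ by Lemma \ref{order}, and let $\lambda$ be any genuine character of $\wt{\mo}$. Then $\lambda\,\epsilon_F^{-1}$ is a character of $\wt{\mo}$ with $(\lambda\,\epsilon_F^{-1})([1_2,-1])=1$, hence descends to a character $\chi$ of $\mathrm{SL}_2(\mo)=\wt{\mo}/\{[1_2,\pm1]\}$. Because $M=0$, (\ref{sub}) gives $U^+(\mo),U^-(\mo)\subset D(\wt{\mo})$; projecting to $\mathrm{SL}_2(\mo)$, the upper and lower unipotent subgroups over $\mo$ lie in $D(\mathrm{SL}_2(\mo))$. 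Since $\mo$ is a local ring, $\mathrm{SL}_2(\mo)$ is generated by elementary matrices, so $\mathrm{SL}_2(\mo)=D(\mathrm{SL}_2(\mo))$ is perfect and admits no nontrivial character. Therefore $\chi=1$, i.e. $\lambda=\epsilon_F$, and together with existence this shows $\epsilon_F$ is the unique genuine character of $\wt{\mo}$.

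The main obstacle I anticipate is the bookkeeping in the $N$-computation above: the self-dual Haar measure, the Fourier transform of $\mathrm{ch~}\mo$, and the Weil constant $\alpha_{\psi_\beta}(-1)$, whose value $1$ here is in any case extractable from the paper's own conventions by evaluating the defining relation $A_\phi=\alpha_{\psi_\beta}(a)|2a|^{-1/2}B_\phi$ at $\phi=\mathrm{ch~}\mo$ and $a=-1$. The uniqueness argument, by contrast, is short once $M=0$ is in hand, and its ingredients (the splitting identity for $s$, $M=0$ for $q\ge 4$ from Lemma \ref{order}, generation of $\mathrm{SL}_2$ of a local ring by elementary matrices, and the resulting perfectness) are either stated earlier or standard. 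Should one prefer, the first assertion can instead be obtained by invoking that $\mathrm{ch~}\mo$ is, up to scalar, the unique $\mathrm{SL}_2(\mo)$-fixed vector of the genuine twist of $\omega_{\psi_\beta}$ when $\mathrm{ord}\,\psi_\beta=0$.
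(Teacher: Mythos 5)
Your proposal is correct and follows essentially the same route as the paper: existence via the splitting $g\mapsto[g,s(g)]$ and the formula $\epsilon_F([g,\tau])=\tau\,s(g)$ (you merely write out the generator check for $\mathrm{ch~}\mo$ that the paper leaves to the preceding discussion), and uniqueness for $q\ge 5$ via $M=0$, the inclusion $U^\pm(\mo)\subset D(\wt{\mo})$ from (\ref{sub}), and the resulting triviality of $\mathrm{SL}_2(\mo)^{ab}$. No gaps.
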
 

\begin{proof} 
We already proved the first part of this lemma. 
If $q \ge 5$, then by (\ref{sub}) we have $U^+(\mo)$, $U^-(\mo) \subset D(\wt{\mo})$. 
Since $N=u^+(-1)u^-(1)u^+(-1)$, $\mathrm{SL}_2(\mo)^{ab}$ is trivial. 
Thus there exists a unique genuine character $\epsilon_F$ of $\wt{\mo}$. \end{proof}

For $a \in \mz$, we define a subset $S(\map^a)$ of $S(F)$ by 
$S(\map^a)=\{f \in S(F) \mid \mathrm{Supp}\,f \subset \map^a \}$. 
For $a \le b \in \mz$, put $S(\map^a/\map^b)=
\{f \in S(\map^a) \mid f(x+t)=f(x)$ for any $t \in \map^b \}$. 
For $f \in S(F) \backslash \{0\}$, 
there exists a pair $(a, b)$ such that $f \in S(\map^a/\map^b)$. 

\begin{lem} \label{gen1} 
Suppose that $q=3$ (resp. $F=\mq_2$). If $\mathrm{ord} \psi_{\beta}=-1$ (resp. $-3$), 
then the group $\wt{\mo}$ preserves $S(\mo/2\map)$ with respect to $\omega_{\psi_{\beta}}$. 
We define $f \in S(\mo/2\map)$ by 
\begin{equation} \label{eigen} 
f(x)=\begin{cases} 1 & \text{ if } x \in 1+2\map \\ -1 & \text{ if } x \in -1+2\map \\ 
0 & \text{ otherwise}. \end{cases} \end{equation} 
Then the subspace of odd functions is $\mac f \subset S(\mo/2\map)$ and 
there exists a genuine character $\mu_{\beta}$ of $\wt{\mo}$ such that 
$\omega_{\psi_{\beta}}([g, \tau])f=\mu_{\beta}([g, \tau])^{-1}f$. 

In the case $q=3$, there exist three genuine characters of $\wt{\mo}$, $\epsilon_F$ and 
$\mu_{\beta}$, 
where $\mu_{\beta}$ extends over all elements $\beta$ such that $\mathrm{ord} \psi_{\beta}=-1$. 
Moreover, the value $\mu_{\beta}(u^+(1))=\psi_{\beta}(-1)$ is a primitive 3rd root of unity, 
which determines $\mu_{\beta}$. \end{lem}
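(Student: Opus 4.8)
The plan is to verify the invariance claim on the three generators $U^+(\mo)$, $N$, $m(1,-1)$ of $\wt{\mo}$ using the formulas (\ref{gexam}), to isolate $\mu_{\beta}$ on the one--dimensional space of odd functions in $S(\mo/2\map)$, and, in the case $q=3$, to compute $\wt{\mo}^{ab}$ and read off its genuine characters. Carrying this out: by (\ref{gexam}), $\omega_{\psi_{\beta}}(m(1,-1))$ acts by the scalar $-\alpha_{\psi_{\beta}}(1)\alpha_{\psi_{\beta}}(-1)=-|\alpha_{\psi_{\beta}}(1)|^{2}=-1$; for $b\in\mo$ one has $\omega_{\psi_{\beta}}(u^+(b))\phi(x)=\psi_{\beta}(bx^{2})\phi(x)$ and, for $x\in\mo$ and $t\in 2\map$, the element $b(2xt+t^{2})$ lies in $\ker\psi_{\beta}$ thanks to $\mathrm{ord}\,\psi_{\beta}=-1$ (resp.\ $-3$), so $u^+(b)$ preserves $S(\mo/2\map)$; and for $N$, computing the Fourier transform of $\mathrm{ch}(c+2\map)$ and again using $\mathrm{ord}\,\psi_{\beta}$ shows that $\omega_{\psi_{\beta}}(N)\mathrm{ch}(c+2\map)$ is a scalar multiple of $x\mapsto\psi_{\beta}(-2cx)\,\mathrm{ch}(\mo)(x)$, which lies in $S(\mo/2\map)$. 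Since such characteristic functions span $S(\mo/2\map)$, the group $\wt{\mo}$ preserves it. Every generator also respects parity (the Fourier transform sends the even, resp.\ odd, part of a function to the even, resp.\ odd, part of its transform), so the odd functions in $S(\mo/2\map)$ form a $\wt{\mo}$--stable subspace; the involution $x\mapsto-x$ on $\mo/2\map$ fixes exactly the cosets lying in $\map$ and interchanges the rest in pairs, so a short count shows this subspace is one--dimensional, hence equal to $\mac f$. Thus $\wt{\mo}$ acts on $\mac f$ by a character $[g,\tau]\mapsto\mu_{\beta}([g,\tau])^{-1}$; and $\mu_{\beta}$ is genuine because $\omega_{\psi_{\beta}}([1_{2},-1])=-\mathrm{id}$.

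\emph{The value of $\mu_{\beta}$ at $u^+(1)$ when $q=3$.} In this case $2$ is a unit, so $2\map=\map$ and $S(\mo/2\map)=S(\mo/\map)$. On $\mathrm{Supp}\,f=(1+\map)\cup(-1+\map)$ we have $x^{2}-1=(x-1)(x+1)\in\map$, so $x^{2}\in 1+\map$ and, as $\mathrm{ord}\,\psi_{\beta}=-1$, $\psi_{\beta}(x^{2})=\psi_{\beta}(1)$ there. Hence $\omega_{\psi_{\beta}}(u^+(1))f=\psi_{\beta}(1)f$, so $\mu_{\beta}(u^+(1))=\psi_{\beta}(1)^{-1}=\psi_{\beta}(-1)$. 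Since $\psi_{\beta}$ restricts to a nontrivial character of the cyclic group $\mo/\map$ of order $3$, this value is a primitive cube root of unity.

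\emph{Exactly three genuine characters when $q=3$.} By Lemma \ref{order} we have $M=1$, hence $U^+(\map),U^-(\map)\subset D(\wt{\mo})$ by (\ref{sub}); in particular $u^+(1)^{3}=u^+(3)\in U^+(\map)$, so the image of $u^+(1)$ in $\wt{\mo}^{ab}$ has order dividing $3$. Conjugating $u^+(1)$ by $N$ gives $u^-(-1)$ up to an element of the central kernel $\langle[1_{2},-1]\rangle$, and conjugation acts trivially on $\wt{\mo}^{ab}$, so the image of $u^-(1)$ — and then, via $N=u^+(-1)u^-(1)u^+(-1)$, the image of $N$ — lies in the subgroup generated by the images of $u^+(1)$ and $[1_{2},-1]$. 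As $\wt{\mo}$ is generated by $U^+(\mo)$, $N$ and $m(1,-1)=[1_{2},-1]$, the group $\wt{\mo}^{ab}$ is generated by those two images and hence is a quotient of $\mz/6\mz$; it has an element of order $3$ (the image of $u^+(1)$, by the preceding paragraph) and an element of order $2$ (the image of $[1_{2},-1]$, nontrivial since $\epsilon_{F}([1_{2},-1])=-1$), so $\wt{\mo}^{ab}\cong\mz/6\mz$ and there are exactly three genuine characters, namely those taking the value $-1$ at $[1_{2},-1]$. They are distinguished by their values at $u^+(1)$, which exhaust the three cube roots of unity: $\epsilon_{F}(u^+(1))=s(u^+(1))=1$ by (\ref{split}), while $\mu_{\beta}$ and $\mu_{2\beta}$ realize the two primitive cube roots, since $\mu_{2\beta}(u^+(1))=\psi_{2\beta}(-1)=\psi_{\beta}(-2)=\psi_{\beta}(1)=\mu_{\beta}(u^+(1))^{-1}$. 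Therefore the genuine characters of $\wt{\mo}$ are precisely $\epsilon_{F}$ and the $\mu_{\beta}$, and $\mu_{\beta}$ is determined by $\mu_{\beta}(u^+(1))=\psi_{\beta}(-1)$.

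I expect the abelianization computation of the last paragraph to be the main obstacle: one must track the Kubota $2$--cocycle when rewriting $N$ and $u^-(1)$ in terms of $u^+(1)$ modulo $D(\wt{\mo})$, although here the cocycle signs cause no trouble because they are absorbed into $[1_{2},-1]$. (Alternatively one may quote $\mathrm{SL}_2(\mo)^{ab}\cong\mz/3\mz$ for residue field of order $3$, which follows from $U^+(\map),U^-(\map)\subset D(\mathrm{SL}_2(\mo))$ together with $\mathrm{SL}_2(\mo/\map)^{ab}\cong\mz/3\mz$.) The first two of the above paragraphs already establish the statement in the case $F=\mq_2$, for which no classification of genuine characters is asserted here.
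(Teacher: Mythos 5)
Your proof is correct, and its overall skeleton matches the paper's: identify the one-dimensional space of odd functions in $S(\mo/2\map)$, let $\wt{\mo}$ act on it to produce the genuine character $\mu_{\beta}$, count the genuine characters via an abelianization, and separate them by their values at $u^+(1)$. The genuine differences are in two sub-steps. First, where the paper dismisses the invariance of $S(\mo/2\map)$ as clear, you verify it generator by generator using (\ref{gexam}), including the Fourier-transform computation for $N$; and you identify the odd subspace by counting the orbits of $x\mapsto -x$ on $\mo/2\map$ rather than by the paper's observation that an odd function constant modulo $2\map$ must vanish on $\map$ together with $\mo^{\times}=(1+2\map)\cup(-1+2\map)$ --- these are equivalent. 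Second, and more substantively, the paper obtains the count of three genuine characters by citing Kirchheimer for $|\mathrm{SL}_2(\mo)^{ab}|=3$, whereas you compute $\wt{\mo}^{ab}\cong\mz/6\mz$ directly from Lemma \ref{order} and the inclusion (\ref{sub}), using the generators $U^+(\mo)$, $N$, $m(1,-1)$ and the fact that $\mu_{\beta}(u^+(1))$ has order $3$ to pin down the group exactly; you then enumerate the three characters by exhibiting $\epsilon_F$, $\mu_{\beta}$, $\mu_{2\beta}$ with the three distinct cube-root values at $u^+(1)$, in place of the paper's argument that $\mu_{\beta}=\mu_{\gamma}$ forces $\beta/\gamma\in 1+\map$ and $[\mo^{\times}:1+\map]=2$. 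What your route buys is self-containedness (no external citation for the abelianization) and an explicit justification of the invariance claim; what it costs is the need to track the Kubota cocycle when rewriting $N$ and $u^-(\pm 1)$ modulo $D(\wt{\mo})$, which, as you correctly note, is harmless because the ambiguity lands in the central subgroup generated by $[1_2,-1]$.
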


\begin{proof} 
Suppose that $F=\mq_2$ and that $\mathrm{ord} \psi_{\beta}=\mathrm{ord} \beta=-3$. 
It is clear that 
$\wt{\mo}$ preserves $S(\mo/2\map)$ with respect to $\omega_{\psi_{\beta}}$. 
If $\phi \in S(\mo/2\map)$ is an odd function, then $\phi$ satisfies $\phi(x)=\phi(-x)=-\phi(x)$ 
for any $x \in \map$. Thus we have $\phi(\map)=0$. 
Since $F=\mq_2$, we have $\mo^{\times}=(1+2\map) \cup (-1+2\map)$ 
and then $\phi(x)=\phi(1)f \in \mac f$. 

Thus there exists a genuine character $\mu_{\beta}$ of $\wt{\mo}$ such that 
$\omega_{\psi_{\beta}}([g, \tau])f=\mu_{\beta}([g, \tau])^{-1}f$. 
Since $u^-(-1)=Nu^+(1)N^{-1}$ and $N=u^+(-1)u^-(1)u^+(-1)$, 
the value $\mu_{\beta}(u^+(1))$ determines $\mu_{\beta}$. 

Suppose that $q=3$ and that $\mathrm{ord} \psi_{\beta}=-1$. 
Then we prove the first part of the lemma similar to the case above. 
By \cite[\S 2.10]{Ki1}, $\mathrm{SL}_2(\mo)^{ab}$ has order 3. 
Thus there exist three genuine characters of $\wt{\mo}$. 
We have $f(x) \ne 0$ if and only if $x \in \mo^{\times}$. 
By (\ref{gexam}), we have  
$\omega_{\psi_{\beta}}(u^+(b))f(x)=\psi_{\beta}(bx^2)f(x)$ for $b \in \mo$. 
If $f(x) \ne 0$, since we have $x^2-1 \in \map$ by Lemma \ref{order}, 
$\mu_{\beta}(u^+(b))^{-1}=\psi_{\beta}(bx^2)=\psi_{\beta}(b)$. 
In particular, $\mu_{\beta}(u^+(1))=\psi_{\beta}(-1)$ is a primitive 3rd root of unity. 
%and we have $\mu_{\beta}(N)=1$. 

For $\gamma \in F$ such that $\mathrm{ord} \psi_{\gamma}=-1$, 
if $\mu_{\beta}=\mu_{\gamma}$, then we have $\psi_{\beta}(1)=\psi_{\gamma}(1)$ 
and then $\beta/\gamma \in 1+\map$. 
Since we have $[\mo^{\times}:(1+\map)]=2$, 
the genuine characters of $\wt{\mo}$ are $\epsilon_F$ and $\mu_{\beta}$, 
where $\mu_{\beta}$ extends over all elements $\beta$ such that $\mathrm{ord} \psi_{\beta}=-1$. 
\end{proof}

\begin{lem} \label{gen2} 
In Lemma \ref{gen1}, suppose that $F=\mq_2$ and that $\mathrm{ord} \beta=-3$. 
Then the value $\mu_{\beta}(u^+(1))=e(\beta)$ is a primitive 8th root of unity, 
which determines $\mu_{\beta}$. 
Moreover, there exist four genuine characters $\mu_{\beta}$ of $\wt{\mz_2}$, 
where $\mu_{\beta}$ extends over all elements $\beta$ such that $\mathrm{ord} \beta=-3$. \end{lem}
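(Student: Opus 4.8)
The plan is to make the value $\mu_\beta(u^+(1))$ explicit, check that it is a primitive $8$th root of unity, and then count the resulting characters. Since $F=\mq_2$ has trivial different, $\mathrm{ord}\,\psi_\beta=\mathrm{ord}\,\md+\mathrm{ord}\,\beta=\mathrm{ord}\,\beta=-3$, so the hypothesis of Lemma \ref{gen1} holds and the genuine character $\mu_\beta$ with $\omega_{\psi_\beta}([g,\tau])f=\mu_\beta([g,\tau])^{-1}f$ is defined, $f$ being the odd function in (\ref{eigen}). First I would apply (\ref{gexam}): $\omega_{\psi_\beta}(u^+(1))f(x)=\psi_\beta(x^2)f(x)$. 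On the support of $f$ we have $x\in\mz_2^{\times}$, hence $x^2\equiv 1\bmod 8$; since $\mathrm{ord}\,\psi_\beta=-3$, the character $\psi_\beta$ is trivial precisely on $8\mz_2=\map^3$, so $\psi_\beta(x^2)=\psi_\beta(1)$ for all such $x$. Therefore $\omega_{\psi_\beta}(u^+(1))f=\psi_\beta(1)f$, whence $\mu_\beta(u^+(1))=\psi_\beta(1)^{-1}=\psi_\beta(-1)=\me_2(-\beta)=e(\beta)$. Writing $\beta=2^{-3}u$ with $u\in\mz_2^{\times}$, this value depends only on the class of $u$ in $(\mz/8)^{\times}$ and equals $e(a/8)$ for its representative $a\in\{1,3,5,7\}$; as $\gcd(a,8)=1$ it is a primitive $8$th root of unity, and the four classes in $(\mz/8)^{\times}$ yield the four distinct primitive $8$th roots of unity.

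Next I would invoke the part of Lemma \ref{gen1} already proved: $\wt{\mz_2}$ is generated by $U^+(\mz_2)$, $N$ and $m(1,-1)$, and using $u^-(-1)=Nu^+(1)N^{-1}$, $N=u^+(-1)u^-(1)u^+(-1)$, $U^{\pm}(\map^3)\subset D(\wt{\mz_2})$ (from (\ref{sub}) and Lemma \ref{order}, where $M=3$ for $F=\mq_2$) and $\mu_\beta(m(1,-1))=-1$, the value $\mu_\beta(u^+(1))$ determines $\mu_\beta$. Hence the four distinct values of $\mu_\beta(u^+(1))$ found above produce four pairwise distinct genuine characters $\mu_\beta$ of $\wt{\mz_2}$.

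It remains to see that these are all the genuine characters of $\wt{\mz_2}$. Fix one $\beta_0$ with $\mathrm{ord}\,\beta_0=-3$. For any genuine character $\lambda$ of $\wt{\mz_2}$, the character $\lambda\mu_{\beta_0}^{-1}$ is trivial on $[1_2,-1]$, hence descends to a character of $\mathrm{SL}_2(\mz_2)$; conversely, $\mu_{\beta_0}\chi$ is a genuine character for every character $\chi$ of $\mathrm{SL}_2(\mz_2)$. Thus the genuine characters of $\wt{\mz_2}$ are in bijection with the characters of $\mathrm{SL}_2(\mz_2)$, of which there are $|\mathrm{SL}_2(\mz_2)^{ab}|=4$ by \cite[\S 2.10]{Ki1} (the analogue of the count used for $q=3$ in Lemma \ref{gen1}). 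Since we have exhibited four pairwise distinct genuine characters $\mu_\beta$, they exhaust all of them.

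The step I expect to require the most care is the explicit identification $\mu_\beta(u^+(1))=e(\beta)$ together with the verification that it is genuinely a \emph{primitive} $8$th root of unity: this rests on the exact order $\mathrm{ord}\,\psi_\beta=-3$, the congruence $x^2\equiv 1\bmod 8$ for $2$-adic units $x$, and the normalization $\me_2(x)=e(-x)$. Once these are in place the distinctness of the four $\mu_\beta$ follows from the determination in Lemma \ref{gen1}, and the counting step is routine given \cite{Ki1}.
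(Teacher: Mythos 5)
Your proposal is correct and follows essentially the same route as the paper: compute $\mu_\beta(u^+(1))=e(\beta)$ from the action of $u^+(b)$ on the odd function $f$ using $x^2\equiv 1\bmod 8$ on $\mathrm{Supp}\,f$, observe that the four classes of $\beta$ modulo squares of the relevant congruence subgroup (equivalently $[\mz_2^\times:1+8\mz_2]=4$) give four distinct primitive $8$th roots of unity, and match this against the count $|\mathrm{SL}_2(\mz_2)^{ab}|=4$ from Kirchheimer via the torsor structure of genuine characters. The only difference is that you spell out explicitly the torsor argument and the determination of $\mu_\beta$ by $\mu_\beta(u^+(1))$, both of which the paper leaves implicit or defers to Lemma~4.
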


\begin{proof} By Lemma \ref{gen1}, there exists a genuine character of $\wt{\mz_2}$. 
Since $\mathrm{SL}_2(\mz_2)^{ab}$ has order 4 by \cite[\S 2.10]{Ki1}, 
the number of genuine characters of $\wt{\mz_2}$ is 4. 
We have $\omega_{\psi_{\beta}}(u^+(b))f(x)=e(-\beta bx^2)f(t)$ 
for $f$ in (\ref{eigen}) and $b \in \mz_2$ by (\ref{gexam}). 
If $f(x) \ne 0$, then we have $x^2-1 \in 8\mz_2$ and $e(-\beta bx^2)=e(-\beta b)$. 
In particular, $\mu_{\beta}(u^+(1))=e(\beta)$ is a primitive 8th root of unity. 
%Moreover, since $N$ equals $u^+(-1)u^-(1)u^+(-1)$, we have $\mu_{\beta}(N)=e(3\beta)$. 

For $\gamma \in \mq_2$ such that $\mathrm{ord} \gamma=-3$, 
if $\mu_{\beta}=\mu_{\gamma}$, then we have $e(\beta)=e(\gamma)$ 
and then $\beta/\gamma \in 1+8\mz_2$. 
Since we have $[\mz_2^{\times}:(1+8\mz_2)]=4$, 
there exist four genuine characters $\mu_{\beta}$ of $\wt{\mz_2}$, 
where $\mu_{\beta}$ extends over all elements $\beta$ such that $\mathrm{ord} \beta=-3$. \end{proof} 

%$\kappa(\beta, g)$をexplicitに書いておくとmultiplier systemをexplicitに
%書き下すときに役立つかと思います。
Let $\mu_{\beta}$ be a nontrivial genuine character in Lemma \ref{gen1} or Lemma \ref{gen2}. 
Then we have 
\[
\mu_{\beta}([g])=s(g) \kappa(\beta, g) \hspace{15pt} 
g=\begin{pmatrix} a & b \\ c & d \end{pmatrix} \in \mathrm{SL}_2(\mo), 
\]
where $\kappa(\beta, g)$ is a continuous function for $g$. 
In the case $F=\mq_2$ and $\mathrm{ord~}\beta=-3$, we have 
\begin{equation} \label{kappa2}
\kappa(\beta, g)=\begin{cases}\psi_{\beta}(-(a+d)c+3c) & c \in \mz_2^{\times} \\ 
\psi_{\beta}((c-b)d-3(d-1)) & c \in 2\mz_2. \end{cases}
\end{equation}

In the case $q=3$ and $\mathrm{ord} \psi_{\beta}=-1$, we have 
\begin{equation} \label{kappa3}
\kappa(\beta, g)=\psi_{\beta}(-(a+d)c+bd(c^2-1)). 
\end{equation} 

\noindent {\bf Remark.}
Suppose that $K'$ is a compact open subgroup of $\mathrm{SL}_2(\mo)$.
Let $\lambda':\tilde {K'}\rightarrow \mathbb{C}^\times$ be a genuine character.
Then one can show that there exists a continuous function $\kappa'$ on $K'$ such that $\lambda'([g])=s(g)\kappa'(g)$ for any $g\in K'$.
As we do not need this result for the rest of this paper, we omit a proof. \vspace{5pt}

% [7] section 1 
%\[ \varepsilon\left(\left[\begin{pmatrix} a & b \\ c & d\end{pmatrix}, \zeta\right]\right)= \begin{cases} \zeta{\displaystyle\frac{\alpha_\psi(d)}{\alpha_\psi(1)}} & \text{ if } c=0, \\ \noalign{\medskip}\zeta{\displaystyle\frac{\alpha_\psi(1)}{\alpha_\psi(d)}}\langle d, c\rangle & \text{ if } c\neq 0.\end{cases} \]

Put $K=$ $\mathrm{SL}_2(\mo)$ and $G=$$\mathrm{SL}_2(F)$. It is known that $K$ (resp. $\tilde{K}$) is 
a compact open subgroup of $G$ (resp. $\tilde{G}$). 
Let $(\pi, V)$ be an irreducible smooth representation of $\tilde{G}$. 
For a character $\lambda$ of $\tilde{K}$, we define a set $(\pi, V)^{\lambda}$ by 
$$(\pi, V)^{\lambda}=\{f \in V \mid \pi(g)f=\lambda(g)^{-1}f \text{ for any } g \in \tilde{K} \}. $$ 

In particular, we consider $(\omega_{\psi_{\beta}}, S(F))^{\lambda}$ 
for a genuine character $\lambda: \tilde{K} \to \mac^{\times}$ 
such that $\lambda(u^+(1)) \ne 1$. 
Since $\lambda(u^+(1)) \ne 1$, $\lambda$ is the character $\mu_{\beta}$ 
in Lemma \ref{gen1} or Lemma \ref{gen2}. 
In particular, we have $q=3$ or $F=\mq_2$. 
When $q=3$ (resp. $F=\mq_2$), we assume that $\mathrm{ord} \psi_{\beta}=-1$ (resp. $-3$). 

\begin{prop} \label{dim}
The representation $c$-Ind$_{\tilde{K}}^{\tilde{G}} \lambda$ is irreducible supercuspidal. 
We have 
$$\mathrm{dim}_{\mac}(\pi, V)^{\lambda}=\begin{cases}1 & 
\pi=c\,\text{-Ind}_{\tilde{K}}^{\tilde{G}} \lambda \\ 0 & \text{otherwise.} \end{cases}$$ 
\end{prop}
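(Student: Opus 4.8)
The plan is to establish two things: first, that $\pi_0 := c\text{-Ind}_{\tilde K}^{\tilde G}\lambda$ is irreducible supercuspidal; and second, that for any irreducible smooth representation $(\pi, V)$ of $\tilde G$ we have $\dim_{\mac}(\pi, V)^{\lambda} = 1$ exactly when $\pi \cong \pi_0$ and $0$ otherwise. For the irreducibility of the compact induction, I would invoke the standard Mackey-theoretic criterion (as in the theory of types, e.g.\ Bushnell--Kutzko, or Kutzko's treatment for $\mathrm{GL}_2$): $c\text{-Ind}_{\tilde K}^{\tilde G}\lambda$ is irreducible provided that for every $g \in \tilde G \setminus \tilde K$ the intertwining space $\mathrm{Hom}_{\tilde K \cap g\tilde K g^{-1}}(\lambda, {}^g\lambda)$ vanishes. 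Because $\tilde K$ is the inverse image of the maximal compact $K = \mathrm{SL}_2(\mo)$ and $\lambda$ restricted to $U^+(1)$ is the nontrivial character $\mu_\beta$ (which has order $3$ when $q=3$ and order $8$ when $F = \mq_2$ by Lemmas \ref{gen1} and \ref{gen2}), one checks the intertwining vanishes: conjugating by a diagonal or lower-triangular element moves $U^+(1)$ to a subgroup on which $\lambda$ and ${}^g\lambda$ differ, using that $\lambda(u^+(b)) = \psi_\beta(-b)$ (up to the explicit constants in \eqref{kappa3}, \eqref{kappa2}) is a faithful character of $\mo/\map$ or $\mz_2/8\mz_2$. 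The supercuspidality then follows because $\tilde K$ is compact mod center and the matrix coefficients of a compactly-induced-from-compact representation are compactly supported mod center, so $\pi_0$ has no Jacquet module, hence is supercuspidal.

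Next, for the dimension count, I would use Frobenius reciprocity for compact induction: for any smooth $(\pi, V)$,
\[
(\pi, V)^\lambda = \mathrm{Hom}_{\tilde K}(\lambda^{-1}, \pi|_{\tilde K})
\cong \mathrm{Hom}_{\tilde K}(\pi^\vee|_{\tilde K}, \lambda)
\cong \mathrm{Hom}_{\tilde G}\bigl(\pi^\vee, c\text{-Ind}_{\tilde K}^{\tilde G}\lambda\bigr),
\]
using that $\tilde K$ is open so that $(\pi^\vee)|_{\tilde K}$ is semisimple and that $\lambda$ is finite-dimensional. Wait --- more carefully, the cleanest route avoids contragredients: $(\pi,V)^\lambda = \mathrm{Hom}_{\tilde K}(\lambda^{-1}, \pi)$, and since $\lambda^{-1}$ is a one-dimensional representation of the open compact $\tilde K$, the relevant reciprocity is $\mathrm{Hom}_{\tilde K}(\lambda^{-1}, \pi|_{\tilde K}) \cong \mathrm{Hom}_{\tilde G}(c\text{-Ind}_{\tilde K}^{\tilde G}\lambda^{-1}, \pi)$ when $\pi$ is admissible, but one should instead use that for $\tilde K$ compact the functor $\mathrm{Hom}_{\tilde K}(\lambda^{-1}, -)$ is exact and that $c\text{-Ind}$ is its left adjoint; since $\pi_0$ is irreducible, $\mathrm{Hom}_{\tilde G}(\pi_0, \pi)$ is $\mac$ if $\pi \cong \pi_0$ and $0$ otherwise by Schur's lemma. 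Combining, $\dim_{\mac}(\pi,V)^\lambda$ equals $1$ when $\pi \cong \pi_0$ and $0$ otherwise, which is the claim; one also checks $(\pi_0)^\lambda$ is genuinely one-dimensional by exhibiting the canonical $\tilde K$-fixed (twisted) line, namely the image in $\pi_0$ of a function supported on $\tilde K$.

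The main obstacle will be the verification of the intertwining-vanishing condition, i.e.\ that $\mathrm{Hom}_{\tilde K \cap {}^g\tilde K}(\lambda, {}^g\lambda) = 0$ for $g \notin \tilde K$. Here I would reduce to double coset representatives via the Cartan (or Iwahori--Bruhat) decomposition of $\mathrm{SL}_2(F)$ relative to $K$, so that $g$ may be taken diagonal of the form $m(\pi^k, \cdot)$ with $k \ge 1$; then $\tilde K \cap {}^g\tilde K$ contains $U^+(\map^0) = U^+(\mo)$ when we conjugate appropriately, and $\lambda$ versus ${}^g\lambda$ differ on this unipotent subgroup because $\lambda|_{U^+(\mo)}$ has conductor exactly $\map$ (it is nontrivial on $U^+(\mo)$ but trivial on $U^+(\map)$, as $\mu_\beta(u^+(b)) = \psi_\beta(-b)$ with $\mathrm{ord}\,\psi_\beta = -1$, resp.\ $-3$ for $\mq_2$), while ${}^g\lambda$ is trivial there. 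This conductor/level computation, together with careful bookkeeping of the cocycle $c(g,h)$ and the splitting function $s$ on the intersections, is where the real work lies; everything else is a formal application of the Bushnell--Kutzko machinery for cuspidal types, which applies verbatim to the metaplectic cover since $\tilde K$ is still compact open and $\tilde Z$ is compact.
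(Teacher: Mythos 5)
Your proposal is correct and follows essentially the same route as the paper: reduce to double coset representatives $m(\pi^n)$, $n\ge 1$, via the Cartan decomposition, kill the intertwining by comparing $\lambda(u^+(1))\ne 1$ with its conjugate $\lambda(u^+(\pi^{2n}))=1$, invoke the Bushnell--Henniart/Kutzko criterion for irreducibility and supercuspidality, and finish the dimension count with Frobenius reciprocity for compact induction plus Schur's lemma. The only caveat is to keep the side of conjugation consistent (the paper works with $\mathrm{Hom}_{g^{-1}Kg\cap K}(\lambda^g,\lambda)$ so that $u^+(1)$ lies in the intersection), but this is bookkeeping, not a gap.
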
 

\begin{proof} 
Put $\lambda^g(x)= \lambda(gxg^{-1})$ for $g \in \tilde{G}$. 
We shall prove that 
\begin{equation} \label{hom} \mathrm{Hom}_{g^{-1}Kg \cap K} (\lambda^g, \lambda)=0 
\hspace{15pt} \text{for } g \notin \tilde{K}. 
\end{equation} 
Put $m(a)=\,$diag$(a, a^{-1})$ for $a \in F^{\times}$. 
Since it is known that 
\[
\mathrm{SL}_2(F)=\sum_{n=0}^{\infty} Km(\pi^n)K,
\] 
we have only to consider the case $g=m(\pi^n)$ for $n>0$. 
Since $u^+(1) \in g^{-1}Kg \cap K$, we have $\lambda(u^+(1)) \ne \lambda^g(u^+(1))$, 
which proves (\ref{hom}). 

It is known that (\ref{hom}) implies the first assertion (see \cite[\S11.4]{BuHe1}). 
Although \cite{BuHe1} treated the GL$_2$ case but the proof is valid in our case. 
By \cite[\S2.5]{BuHe1}, we have 
Hom$_{\tilde{G}}(c$-Ind$_{\tilde{K}}^{\tilde{G}} \lambda, \pi) \simeq$ 
Hom$_{\tilde{K}}(\lambda, \pi)$, which completes the proof. 
\end{proof} 

It is known that 
$$\omega_{\psi_{\beta}}=\omega_{\psi_{\beta}}^+ \oplus \omega_{\psi_{\beta}}^-, $$ 
where $\omega_{\psi_{\beta}}^+$ (resp. $\omega_{\psi_{\beta}}^-$) is 
the restriction of $\omega_{\psi_{\beta}}$ to the even (resp. odd) functions. 
Note that these restrictions are irreducible but not isomorphic. 
By Proposition \ref{dim}, we have $\omega_{\psi_{\beta}}^- \simeq$ 
c-Ind$_{\tilde{K}}^{\tilde{G}} \mu_{\beta}$. 
Since $\lambda(u^+(1)) \ne 1$, we have dim$(\omega_{\psi_{\beta}}^+, S(F))^{\lambda}=0$ 
by Proposition \ref{dim}. 
Since we assume that $\mathrm{ord} \psi_{\beta}=-1$ if $q=3$ and that $\mathrm{ord} \beta=-3$ if $F=\mq_2$,  we have 
\begin{equation} \label{dim2} \mathrm{dim}_{\mac}(\omega_{\psi_{\beta}}, S(F))^{\lambda}=
\begin{cases}1 & \lambda=\mu_{\beta} \\ 0 & \text{otherwise.} \end{cases} \end{equation} 

Now we assume that $q$ is odd. 
By Lemma \ref{eps}, there exists a genuine character $\epsilon_F:\wt{\mo} \to \mac^{\times}$ 
and we have $\omega_{\psi_{\beta}}([g, \tau])\, \mathrm{ch~}\mo=\epsilon_F([g, \tau])^{-1} 
\mathrm{ch~}\mo$,  where $\mathrm{ord} \psi_{\beta}=0$. 

\begin{lem} \label{mq5} 
Put $T_{\beta}=(\omega_{\psi_{\beta}}, S(F))^{\epsilon_F}$. 
Then we have 
$$T_{\beta}= \begin{cases} \mac \ \mathrm{ ch~} \map^{-\mathrm{ord} \psi_{\beta}/2} & 
\text{if } \mathrm{ord~}\psi_{\beta} \equiv 0 \text{ mod~2 } \\ 0 & \text{otherwise.}\end{cases} $$ 
\end{lem}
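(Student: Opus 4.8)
The plan is to extract the eigenvector condition from a finite generating set of $\wt{\mo}$, convert it into support constraints on $f$ and on $\widehat f$, deduce the two cases, and finally check that the candidate generator of $T_\beta$ really lies in it.

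First I would observe that, since $\omega_{\psi_\beta}([g,\tau])=\tau\,\omega_{\psi_\beta}([g])$ and $\epsilon_F([g,\tau])^{-1}=\tau\,s(g)$, a function $f\in S(F)$ lies in $T_\beta$ precisely when $\omega_{\psi_\beta}([g])f=s(g)f$ for all $g\in\mathrm{SL}_2(\mo)$; because an eigenvector equation that holds on a generating set holds on the whole group, and $\wt{\mo}$ is generated by $U^+(\mo)$, $N$ and $m(1,-1)$, it suffices to impose this at $u^+(b)$ ($b\in\mo$), at $N$, and at $m(1,-1)$. The formulas (\ref{gexam}), together with $\alpha_{\psi_\beta}(-1)=\overline{\alpha_{\psi_\beta}(1)}$ and $|\alpha_{\psi_\beta}(1)|=1$, give $\omega_{\psi_\beta}(m(1,-1))=-\mathrm{id}$, matching $\epsilon_F(m(1,-1))^{-1}=-1$, so that generator imposes nothing; using $s(u^+(b))=s(N)=1$ and $|2|=1$ (as $q$ is odd), the remaining conditions are
\[
\psi_\beta(bx^2)f(x)=f(x)\ \ (b\in\mo,\ x\in F),\qquad \alpha_{\psi_\beta}(-1)\,\widehat f(-2x)=f(x)\ \ (x\in F),
\]
and I set $d=\mathrm{ord}\,\psi_\beta=\mathrm{ord}\,\md+\mathrm{ord}\,\beta$.

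Next I would read off the supports. The first equation forces $bx^2\in\ker\psi_\beta=\map^{-d}$ for all $b\in\mo$ whenever $f(x)\neq 0$, i.e.\ $\mathrm{Supp}\,f\subset\map^{\lceil -d/2\rceil}$; the second (recall $-2\in\mo^\times$) then forces $\mathrm{Supp}\,\widehat f\subset\map^{\lceil -d/2\rceil}$ as well. Since by orthogonality of additive characters the Fourier transform carries a function supported in $\map^a$ to one invariant under translation by $\map^{-d-a}$, applying this to $\widehat f$ and using $\widehat{\widehat f}(x)=f(-x)$ shows $f$ is $\map^{-d-\lceil -d/2\rceil}$-periodic. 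If $d$ is odd, then $\lceil -d/2\rceil=(1-d)/2$ while $-d-\lceil -d/2\rceil=-(d+1)/2<(1-d)/2$, so $f$ is supported in an ideal strictly inside its lattice of periods, whence $f=0$ and $T_\beta=0$. If $d$ is even, then $\lceil -d/2\rceil=-d/2=-d-\lceil -d/2\rceil$, so $f\in S(\map^{-d/2}/\map^{-d/2})=\mac\,\mathrm{ch}\,\map^{-d/2}$, giving $T_\beta\subset\mac\,\mathrm{ch}\,\map^{-d/2}$.

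To finish the even case I would verify $\mathrm{ch}\,\map^{-d/2}\in T_\beta$. The equations at $u^+(b)$ and $m(1,-1)$ hold for it by inspection; for $N$, orthogonality gives $\widehat{\mathrm{ch}\,\map^{-d/2}}=\mathrm{vol}(\map^{-d/2})\,\mathrm{ch}\,\map^{-d/2}$ and Plancherel's formula forces $\mathrm{vol}(\map^{-d/2})=1$, so $\omega_{\psi_\beta}(N)\,\mathrm{ch}\,\map^{-d/2}=\alpha_{\psi_\beta}(-1)\,\mathrm{ch}\,\map^{-d/2}$. Thus everything comes down to $\alpha_{\psi_\beta}(-1)=1$, which is the one step that is not purely formal — and I expect it to be the main obstacle, to be resolved by reduction to the unramified-conductor case already in the text: since $\alpha_{\psi_\beta}$ depends only on the square class and $\alpha_{\psi_\beta}(a)=\alpha_{\psi_{\beta c^2}}(a)$, I choose $c\in F^\times$ with $\mathrm{ord}\,c=-d/2$, so $\beta'=\beta c^2$ has $\mathrm{ord}\,\psi_{\beta'}=0$; the identity recorded just before Lemma \ref{eps} gives $\omega_{\psi_{\beta'}}(N)\,\mathrm{ch}\,\mo=\mathrm{ch}\,\mo$, while the computation just made (now with $d=0$) gives $\omega_{\psi_{\beta'}}(N)\,\mathrm{ch}\,\mo=\alpha_{\psi_{\beta'}}(-1)\,\mathrm{ch}\,\mo$, so $\alpha_{\psi_\beta}(-1)=\alpha_{\psi_{\beta'}}(-1)=1$. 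Hence $T_\beta=\mac\,\mathrm{ch}\,\map^{-d/2}$ when $d$ is even. (Alternatively, $\mathrm{ch}\,\map^{-d/2}\in T_\beta$ follows by transporting $\mathrm{ch}\,\mo\in(\omega_{\psi_{\beta'}},S(F))^{\epsilon_F}$ along $\phi\mapsto\phi(c^{-1}\cdot)$, which intertwines $\omega_{\psi_{\beta'}}$ with $\omega_{\psi_\beta}$ once the $\psi$-self-dual Haar measures are matched.)
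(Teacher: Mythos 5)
Your proposal is correct and follows essentially the same route as the paper's proof: the $u^+(b)$-eigenvector condition bounds the support, the $N$-condition together with Fourier duality bounds the periodicity (this is exactly the paper's computation $\hat{\phi}\in S(\map^{-1-b}/\map^{-1-a})$), and the even case is settled by appeal to the $\mathrm{ord}\,\psi_{\beta}=0$ identity $\omega_{\psi_{\beta}}([g,\tau])\,\mathrm{ch}\,\mo=\epsilon_F([g,\tau])^{-1}\,\mathrm{ch}\,\mo$ recorded before Lemma \ref{eps}. The only difference is organizational: you treat all orders uniformly (which also makes the upper bound $\dim T_{\beta}\le 1$ fully explicit), whereas the paper first reduces to $\mathrm{ord}\,\psi_{\beta}\in\{0,1\}$ by conjugating by $m(t,1)$.
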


\begin{proof} 
Put $D=\mathrm{ord} \psi_{\beta}$. Suppose that $D=0$. 
Then we have $\epsilon_F(U^+(\mo))=1$ and it is clear that $\wt{\mo}$ preserves 
$S(\mo/\mo)=\mac \mathrm{~ch~}\mo$ with respect to $\omega_{\psi_{\beta}}$. 
Then we have $T_{\beta}=\mac \mathrm{~ch~}\mo=\mac \mathrm{~ch~}\map^{-D/2}$. 
Since we have $\omega_{\psi_{\beta t^2}}(g)=\omega_{\psi_{\beta}}(m(t, 1) g\,m(t, 1)^{-1})$ 
for $t \in F^{\times}$ and $g \in \wt{\mo}$, 
the same is true for $T_{\beta t^2}$ for any $t \in F^{\times}$. 

Assume that $\phi \in T_{\beta} \backslash \{0\}$ with $D=1$. 
Then we have $\omega_{\psi_{\beta}}(h)\phi=\epsilon_F(h)^{-1}\phi$. 
By (\ref{gexam}), 
we have $\psi_{\beta}(bx^2)=1$ for any $b \in \mo^{\times}$ when $\phi(x) \ne 0$. 
In particular, since $\mathrm{ord} b x^2 \ge$ $\mathrm{ord} x^2 \ge -1$, we have $\phi \in S(\mo)$. 

We assume $\phi \in S(\map^a/\map^b)$ such that $a \ge 0$ is maximal and $b$ is minimal. 
A calculation of the Fourier transformation shows that 
$\hat{\phi} \in S(\map^{-1-b}/\map^{-1-a})$. 
Since $\alpha_{\psi_{\beta}}(-1)\hat{\phi}(-2x)=\phi(x)$ by (\ref{gexam}), 
we have $\hat{\phi} \in S(\map^a/ \map^b)$. 
Then $a=-1-b$ is less than 0, which contradicts $a \ge 0$. 
\end{proof}

Set $F^{\times 2}= \{x^2 \mid x \in F^{\times} \}$. 
Assume that $q=3$ or $F=\mq_2$. 
By Lemma \ref{gen1} and Lemma \ref{gen2}, 
there exist genuine characters $\mu_{\beta}$ of $\wt{\mo}$. 

\begin{lem} \label{mq3} 
When $q=3$ (resp. $F=\mq_2$), 
we put $T_{\beta}=(\omega_{\psi_{\beta}}, S(F))^{\mu_{\gamma}}$, 
where $\gamma \in F^{\times}$ such that $\mathrm{ord} \psi_{\gamma}=-1$ (resp. $-3$). 
Then we have 
\begin{equation} \label{f3} \mathrm{dim}~T_{\beta}= \begin{cases} 1 & \text{if } 
\beta/\gamma \in F^{\times 2} \\ 0 & \text{otherwise.} \end{cases} \end{equation} 
In particular, when $\beta/\gamma \in 1+\map$ (resp. $1+8\mz_2$), 
we have $T_{\beta}=\mac f$, where $f$ is the function in (\ref{eigen}). \end{lem}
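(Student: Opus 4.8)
The plan is to reduce the computation to the already-proved formula \eqref{dim2} by rescaling the additive character, after first disposing of a degenerate case by a direct support argument. Throughout, fix a prime element $\pi$ of $F$ and recall that $\mathrm{ord}\,\psi_\gamma=-1$ (resp. $-3$) when $q=3$ (resp. $F=\mq_2$), so that $\mathrm{ord}\,\gamma=\mathrm{ord}\,\psi_\gamma-\mathrm{ord}\,\md<-\mathrm{ord}\,\md$ and hence $\gamma\notin\md^{-1}$. The main thing to get right will be that the rescaling reduction is available precisely when $\mathrm{ord}(\beta/\gamma)$ is even (the complementary case being handled by the support argument), together with the check that the rescaling operator respects the defining conditions of the two $\mu_\gamma$-isotypic spaces; everything else is bookkeeping with valuations and square classes plus Lemmas \ref{gen1} and \ref{gen2} and formula \eqref{dim2}.

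First I would treat the case $\mathrm{ord}(\beta/\gamma)$ odd, where $\beta/\gamma\notin F^{\times2}$ and the claim is $T_\beta=0$. Take $f\in T_\beta$ and $x$ with $f(x)\neq0$. Applying the relation for $u^+(b)$ in \eqref{gexam} and using $\mu_\gamma(u^+(b))^{-1}=\psi_\gamma(b)$ for $b\in\mo$ — this is exactly the computation carried out in the proofs of Lemma \ref{gen1} and Lemma \ref{gen2} — one gets $\psi_\beta(bx^2)=\psi_\gamma(b)$ for all $b\in\mo$, i.e. $\beta x^2-\gamma\in\md^{-1}$. Since $\gamma\notin\md^{-1}$, every element of $\gamma+\md^{-1}$ has order $\mathrm{ord}\,\gamma$, so $\mathrm{ord}(\beta x^2)=\mathrm{ord}\,\gamma$, forcing $2\,\mathrm{ord}\,x=\mathrm{ord}\,\psi_\gamma-\mathrm{ord}\,\psi_\beta$, which is odd under the present hypothesis — a contradiction. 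Hence $\mathrm{Supp}\,f=\emptyset$ and $f=0$.

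Next I would treat the case $\mathrm{ord}(\beta/\gamma)=2\ell$ even; write $\beta/\gamma=\pi^{2\ell}v$ with $v\in\mo^\times$ and set $\beta_1=\pi^{-2\ell}\beta$, so that $\mathrm{ord}\,\psi_{\beta_1}=\mathrm{ord}\,\psi_\gamma$ (the normalization demanded by \eqref{dim2}), $\beta_1/\gamma=v\in\mo^\times$, and $\beta=\beta_1(\pi^\ell)^2$. By the scaling identity for the Weil representation used in the proof of Lemma \ref{mq5}, $\omega_{\psi_\beta}(g)=\omega_{\psi_{\beta_1}}(m(\pi^\ell,1)\,g\,m(\pi^\ell,1)^{-1})$ for $g\in\wt{\mo}$; since $\omega_{\psi_{\beta_1}}$ is a representation of $\widetilde{\mathrm{SL}_2(F)}$, the invertible operator $\Phi=\omega_{\psi_{\beta_1}}(m(\pi^\ell,1))^{-1}$ on $S(F)$ satisfies $\Phi\,\omega_{\psi_\beta}(g)\,\Phi^{-1}=\omega_{\psi_{\beta_1}}(g)$ for $g\in\wt{\mo}$. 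Because $T_\beta$ and $(\omega_{\psi_{\beta_1}},S(F))^{\mu_\gamma}$ are both cut out by the conditions $\omega(g)f=\mu_\gamma(g)^{-1}f$ for $g\in\wt{\mo}$, the operator $\Phi$ restricts to a linear isomorphism between them, whence $\dim T_\beta=\dim(\omega_{\psi_{\beta_1}},S(F))^{\mu_\gamma}$; by \eqref{dim2} this is $1$ if $\mu_{\beta_1}=\mu_\gamma$ and $0$ otherwise.

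Finally I would identify when $\mu_{\beta_1}=\mu_\gamma$. By Lemma \ref{gen1} (resp. Lemma \ref{gen2}) this holds if and only if $\beta_1/\gamma\in1+\map$ (resp. $1+8\mz_2$). Since $(\mo^\times)^2=1+\map$ when $q=3$ and $(\mz_2^\times)^2=1+8\mz_2$, and since $\beta_1/\gamma\in\mo^\times$ with $\beta/\gamma=\pi^{2\ell}(\beta_1/\gamma)$, this is equivalent to $\beta_1/\gamma\in F^{\times2}$, hence to $\beta/\gamma\in F^{\times2}$. This gives \eqref{f3}. For the last assertion, if $\beta/\gamma\in1+\map$ (resp. $1+8\mz_2$) then $\ell=0$, $\beta_1=\beta$, $\mathrm{ord}\,\psi_\beta=-1$ (resp. $\mathrm{ord}\,\beta=-3$) and $\mu_\beta=\mu_\gamma$, so by Lemma \ref{gen1} (resp. Lemma \ref{gen2}) the function $f$ of \eqref{eigen} satisfies $\omega_{\psi_\beta}([g,\tau])f=\mu_\gamma([g,\tau])^{-1}f$, i.e. $0\neq f\in T_\beta$; combined with $\dim T_\beta=1$ this yields $T_\beta=\mac f$.
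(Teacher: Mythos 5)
Your proposal is correct and follows essentially the same route as the paper's proof: normalize $\mathrm{ord}\,\psi_\beta$ by rescaling $\beta$ by a square (conjugating the Weil representation by $m(\pi^\ell,1)$, exactly as in the proof of Lemma \ref{mq5}), then invoke (\ref{dim2}) together with Lemma \ref{gen1} (resp.\ Lemma \ref{gen2}) and the identification of $1+\map$ (resp.\ $1+8\mz_2$) with the group of squares of units. The only divergence is minor: where the paper disposes of the wrong-parity case by citing Proposition \ref{dim} and (\ref{dim2}) after normalizing to $\mathrm{ord}\,\psi_\beta=0$, you give a short direct support/valuation argument from the $u^+(b)$-eigenvalue equation, which is a valid and slightly more self-contained substitute.
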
 

\begin{proof} 
We prove the lemma in the case $q=3$. The proof for $F=\mq_2$ is similar. 
Put $D=\mathrm{ord} \psi_{\beta}$. 
Then we may assume that $D \in \{0, -1\}$ in the same way as the proof of Lemma \ref{mq5}. 
By Proposition \ref{dim} and (\ref{dim2}), we have dim~$T_{\beta}=0$ when $D=0$. 
Suppose that $D=-1$ and that $\phi \in T_{\beta}$ is nonzero. 
Then by Lemma \ref{gen1}, we have $\phi \in \mac f$. 
Lemma \ref{gen1} shows that $f$ lies in $T_{\beta}$ if and only if $\beta/\gamma \in 1+\map$. 
We have $1+\map \subset F^{\times 2}$, which completes the proof. \end{proof}

\section{Multiplier systems for $\mathrm{SL}_2(\mo)$} \label{system} 
From now on, let $F$ be a totally real number field such that $[F:\mq]=n$. 
Let $v$ be a place of $F$ and $\ma$ the adele ring of $F$. 
We denote the completion of $F$ at $v$ by $F_v$. 
If $v$ is an infinite place, we write $v \mid \infty$. Otherwise, we write $v < \infty$. 
For $v<\infty$, let $\mo_v$, $\map_v$ and $\md_v$ be the ring of integers of $F_v$, 
the maximal ideal of $\mo_v$ and the different of $F_v/\mq_p$, respectively.

For any $v$, let $\iota_v:F \to F_v$ be the embedding. 
The entrywise embeddings of $\mathrm{SL}_2(F)$ into $\mathrm{SL}_2(F_v)$ 
are also denoted by $\iota_v$. 
Let $\{\infty_1, \cdots, \infty_n\}$ be the set of infinite places of $F$. 
Put $\iota_i=\iota_{\infty_i}$ for $1 \le i \le n$. 
We embed $\mathrm{SL}_2(F)$ into $\mathrm{SL}_2(\mr)^n$ by $r \mapsto (\iota_1(r), \cdots, \iota_n(r))$. 

We define the metaplectic group $\wt{\mr}$ of $\mathrm{SL}_2(\mr)$ similar to 
the case $F_v/\mq_p$. 
Let $\ms$ be a finite set of places of $F$, which contains all places above 2 and $\infty$. 
%We also assume that $\ms$ contains all $v<\infty$ such that $\mathrm{ord} _v \psi_v \ne 0$. 
Set 
$$\mathrm{SL}_2(\ma)_{\ms}=\prod_{v \in \ms} \mathrm{SL}_2(F_v) \times 
\prod_{v \notin \ms} \mathrm{SL}_2(\mo_v). $$ 
The double covering of $\mathrm{SL}_2(\ma)_{\ms}$ defined by the 2-cocycle 
$\prod_{v \in \ms} c_v(g_{1, v}, g_{2, v})$ is denoted by $\wt{\ma}_{\ms}$, 
where $c_v$ is the Kubota 2-cocycle for $\mathrm{SL}_2(F_v)$. 

Let $s_v:$ $\mathrm{SL}_2(\mo_v) \to \{\pm 1\}$ is the map $s$ in (\ref{split}) for 
$v<\infty$. 
For a finite set $\ms'$ of places of $F$ such that $\ms \subset \ms'$, 
we can define an embedding 
$\iota_{\ms}^{\ms'}: \wt{\ma}_{\ms} \to \wt{\ma}_{\ms'}$ by 
$$[(g_v), \zeta] \mapsto \Big[(g_v),\ \ \zeta \prod_{v \in \ms' \backslash \ms} s_v(g_v) \Big]. $$ 

For $v<\infty$, a map $\mathbf{s}_v: \mathrm{SL}_2(\mo_v) \to \wt{\mo_v}$ is given by 
$\mathbf{s}_v(\gamma)=[\gamma, s_v(\gamma)]$ for $\gamma \in \mathrm{SL}_2(\mo_v)$. 
The adelic metaplectic group $\wt{\ma}$ is the direct limit $\varinjlim \wt{\ma}_{\ms}$. 
It is a double covering of $\mathrm{SL}_2(\ma)$ and 
there exists a canonical embedding $\wt{F_v} \to \wt{\ma}$ for each $v$. 
Let $\prod_{v}' \wt{F_v}$ be the restricted direct product with respect to 
$\mathbf{s}_v($$\mathrm{SL}_2(\mo_v))$. 
Then there is a canonical surjection $\prod_{v}' \wt{F_v} \to \wt{\ma}$. 
The image of $(g_v)_v \in \prod_{v}' \wt{F_v}$ is also denoted by $(g_v)_v$. 
Note that for a given $g \in \wt{\ma}$, the expression $g=(g_v)_v$ is not unique. 

We denote the embedding of $\mathrm{SL}_2(F)$ into $\mathrm{SL}_2(\ma)$ by $\iota$. 
The finite part of $\mathrm{SL}_2(\ma)$ is denoted by $\mathrm{SL}_2(\ma_f)$. 
Let $\iota_f:$ $\mathrm{SL}_2(F) \to$ $\mathrm{SL}_2(\ma_f)$ be 
the projection of the finite part 
and $\iota_{\infty}:$ $\mathrm{SL}_2(F) \to$ $\mathrm{SL}_2(F_{\infty})=$ $\mathrm{SL}_2(\mr)^n$ that of the infinite part. 
Then we have $\iota(g)=\iota_f(g) \iota_{\infty}(g)$ for any $g \in \mathrm{SL}_2(F)$. 
The embedding of $F$ into $\ma_f$ is also denoted by $\iota_f$. 

It is known that $\mathrm{SL}_2(F)$ can be canonically embedded into $\wt{\ma}$.
The embedding $\tilde{\iota}$ is given by $g \mapsto ([\iota_v(g)])_v$ 
for each $g \in$ $\mathrm{SL}_2(F)$. 
We define the maps $\tilde{\iota}_f:\mathrm{SL}_2(F) \to \wt{\ma_f}$ 
and $\tilde{\iota}_{\infty}:\mathrm{SL}_2(F) \to \wt{F_{\infty}}$ by 
\[
\tilde{\iota}_f(g)=([\iota_v(g)])_{v<\infty} \times ([1_2])_{v \mid \infty}, \hspace{15pt} 
\tilde{\iota}_{\infty}(g)=([1_2])_{v<\infty} \times ([\iota_i(g)])_{v \mid \infty}. 
\]
Then we have $\tilde{\iota}(g)=\tilde{\iota}_f(g) \tilde{\iota}_{\infty}(g)$ for any 
$g \in \mathrm{SL}_2(F)$. 

For $\gamma=[g, \tau] \in \wt{\mr}$, $g=\begin{pmatrix} a & b \\ c& d \end{pmatrix}$ 
and $z \in \mh$, 
$\tilde{j}:\wt{\mr} \times \mh \to \mac$ is an automorphy factor given by 
\begin{equation} \label{auto} 
\tilde{j}(\gamma, z)=\begin{cases}\tau \sqrt{d} & \text{if } c=0, d>0, \\ 
-\tau \sqrt{d} & \text{if } c=0, d<0, \\ \tau(cz+d)^{1/2} & \text{if } c \ne 0. \end{cases} 
\end{equation} 
Here, we choose arg$(cz+d)$ such that $-\pi<$arg$(cz+d) \le \pi$. 
Note that $\tilde{j}([g, \tau], z)$ is the unique automorphy factor such that 
$\tilde{j}([g, \tau], z)^2=j(g, z)$, where $j(g, z)$ is the usual automorphy factor 
on $\mathrm{SL}_2(\mr) \times \mh$ (see \cite[\S 7]{HiIk1}). 
Note that $\tilde{j}([g], z)=J(g, z)$, where $J(g, z)$ is defined in (\ref{capJ}).

\begin{dfn}\label{mulsys}
Let $\Gamma \subset$ $\mathrm{SL}_2(\mo)$ be a congruence subgroup. 
the map $\mv=\mv(\gamma):\Gamma \to \mac^{\times}$ is 
said to be a multiplier system of half-integral weight 
if $\mv(\gamma) \prod_{i=1}^n \tilde{j}([\iota_i(\gamma)], z_i)$ is an automorphy factor 
for $\Gamma \times \mh^n$, where $\tilde{j}$ is the automorphy factor in (\ref{auto}). 
\end{dfn}

We have 
$\tilde{j}(\gamma_1, \gamma_2(z)) \tilde{j}(\gamma_2, z)=\tilde{j}(\gamma_1\gamma_2, z)$ 
for $\gamma_1, \gamma_2 \in \wt{\mr}$. 
Replacing $\gamma_i$ with $[g_i]$ for $i=1, 2$, we have 
\begin{equation} \label{auto2} 
\tilde{j}([g_1], g_2(z)) \tilde{j}([g_2], z)=c_{\mr}(g_1, g_2) \tilde{j}([g_1g_2], z), \end{equation} 
where $c_{\mr}(\cdot, \cdot)$ is the Kubota 2-cocycle at infinite places. 

\begin{lem} \label{muliff}
A function $\mv:\Gamma \to \mac^{\times}$ is a multiplier system of half-integral weight 
if and only if we have 
$$\mv(\gamma_1)\mv(\gamma_2)=c_{\infty}(\gamma_1, \gamma_2) \mv(\gamma_1\gamma_2) 
\hspace{15pt} \gamma_1, \gamma_2 \in \Gamma, $$ 
where $c_{\infty}(\gamma_1, \gamma_2)=
\prod_{i=1}^n c_{\mr}(\iota_i(\gamma_1), \iota_i(\gamma_2))$. \end{lem}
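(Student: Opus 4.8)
The plan is to unwind Definition \ref{mulsys} directly using the cocycle relation \eqref{auto2}. Recall that a function $J_w(\gamma, z) = \mv(\gamma) \prod_{i=1}^n \tilde{j}([\iota_i(\gamma)], z_i)$ being an automorphy factor for $\Gamma \times \mh^n$ means exactly that it is nowhere zero and satisfies the cocycle identity $J_w(\gamma_1, \gamma_2(z)) J_w(\gamma_2, z) = J_w(\gamma_1 \gamma_2, z)$ for all $\gamma_1, \gamma_2 \in \Gamma$ and $z \in \mh^n$. Since $\mv$ takes values in $\mac^\times$ and each $\tilde{j}$ is nowhere vanishing, the nonvanishing is automatic; the content is the cocycle identity.

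First I would substitute the product formula into the cocycle identity. On the left side one gets
\[
\mv(\gamma_1)\mv(\gamma_2) \prod_{i=1}^n \tilde{j}([\iota_i(\gamma_1)], \iota_i(\gamma_2)(z_i))\, \tilde{j}([\iota_i(\gamma_2)], z_i),
\]
and on the right side $\mv(\gamma_1\gamma_2) \prod_{i=1}^n \tilde{j}([\iota_i(\gamma_1\gamma_2)], z_i)$. Applying \eqref{auto2} at each infinite place $\infty_i$ — noting that $\iota_i(\gamma_1)\iota_i(\gamma_2) = \iota_i(\gamma_1\gamma_2)$ since $\iota_i$ is a group homomorphism — the product over $i$ on the left becomes $\prod_{i=1}^n c_{\mr}(\iota_i(\gamma_1), \iota_i(\gamma_2))\, \tilde{j}([\iota_i(\gamma_1\gamma_2)], z_i)$, i.e. $c_\infty(\gamma_1, \gamma_2) \prod_{i=1}^n \tilde{j}([\iota_i(\gamma_1\gamma_2)], z_i)$. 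Cancelling the common nonzero factor $\prod_{i=1}^n \tilde{j}([\iota_i(\gamma_1\gamma_2)], z_i)$ from both sides yields $\mv(\gamma_1)\mv(\gamma_2) c_\infty(\gamma_1,\gamma_2)^{-1} = \mv(\gamma_1\gamma_2)$, which is equivalent to the claimed relation $\mv(\gamma_1)\mv(\gamma_2) = c_\infty(\gamma_1, \gamma_2)\mv(\gamma_1\gamma_2)$ once one checks $c_\infty$ is $\pm 1$-valued, hence its own inverse.

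This argument is reversible: if $\mv$ satisfies the stated relation, then running the same computation backwards shows $J_w$ satisfies the automorphy-factor cocycle identity, and $J_w$ is nowhere zero, so it is an automorphy factor. I do not anticipate a serious obstacle here; the only point requiring a word of care is the bookkeeping on which argument $z$ appears where (the left-hand factor $\tilde{j}([\iota_i(\gamma_1)], \cdot)$ is evaluated at the point $\iota_i(\gamma_2)(z_i)$, consistently with how the Kubota cocycle $c_\mr$ arises in \eqref{auto2}), together with the observation that $c_\mr$, being a $2$-cocycle valued in $\{\pm 1\}$, satisfies $c_\mr = c_\mr^{-1}$, so that moving it from one side of the equation to the other does not change it. I would state these two small facts explicitly and then present the chain of equalities above as the proof in both directions.
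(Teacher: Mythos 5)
Your argument is correct and is exactly the paper's proof (the paper simply notes that $\iota_i(\gamma_1)\iota_i(\gamma_2)=\iota_i(\gamma_1\gamma_2)$ and invokes (\ref{auto2}) together with Definition \ref{mulsys}); you have merely written out the substitution and cancellation explicitly, including the harmless use of $c_{\infty}=c_{\infty}^{-1}$ since the Kubota cocycle is $\{\pm 1\}$-valued.
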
 

\begin{proof} 
We have $\iota_i(\gamma_1)\iota_i(\gamma_2)=\iota_i(\gamma_1\gamma_2)$ for any $i$. 
Thus (\ref{auto2}) and Definition \ref{mulsys} prove the lemma. \end{proof}

Let $K_{\Gamma} \subset$ $\mathrm{SL}_2(\ma_f)$ be the closure of $\iota_f(\Gamma)$ 
in $\mathrm{SL}_2(\ma_f)$. 
Then $K_{\Gamma}$ is a compact open subgroup 
and we have $\iota_f^{-1}(K_{\Gamma})=\Gamma$. 
Let $\tilde{K}_{\Gamma}$ be the inverse image of $K_{\Gamma}$ in $\wt{\ma_f}$. 

\begin{lem} \label{vlambda}
Let $\lambda:\tilde{K}_{\Gamma} \to \mac^{\times}$ be a genuine character. 
Put $\mv_{\lambda}(\gamma)=\lambda(\tilde{\iota}_f(\gamma))$ 
for $\gamma \in \Gamma$. 
Then $\mv_{\lambda}$ is a multiplier system of half-integral weight for $\Gamma$. \end{lem}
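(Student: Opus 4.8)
The plan is to reduce everything to the cocycle identity of Lemma~\ref{muliff} and then to exploit that the canonical embedding $\tilde{\iota}:\mathrm{SL}_2(F)\to\wt{\ma}$ is a group homomorphism. First I would check that $\mv_\lambda$ is a well-defined $\mac^\times$-valued function on $\Gamma$: for $\gamma\in\Gamma$ we have $\iota_f(\gamma)\in\iota_f(\Gamma)\subset K_\Gamma$, so $\tilde{\iota}_f(\gamma)\in\tilde{K}_\Gamma$ and $\lambda(\tilde{\iota}_f(\gamma))$ makes sense. Since $\lambda$ is a character, multiplicativity gives $\mv_\lambda(\gamma_1)\mv_\lambda(\gamma_2)=\lambda(\tilde{\iota}_f(\gamma_1)\tilde{\iota}_f(\gamma_2))$, so by Lemma~\ref{muliff} it suffices to show that $\tilde{\iota}_f(\gamma_1)\tilde{\iota}_f(\gamma_2)$ and $\tilde{\iota}_f(\gamma_1\gamma_2)$ differ by the central element $[1_2,c_\infty(\gamma_1,\gamma_2)]$.

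The one nontrivial input is the comparison of the finite Kubota cocycle with the infinite one along $\iota(\mathrm{SL}_2(F))$. For $\gamma_1,\gamma_2\in\Gamma$ set $c_f(\gamma_1,\gamma_2)=\prod_{v<\infty}c_v(\iota_v(\gamma_1),\iota_v(\gamma_2))$, so that the multiplication law in $\wt{\ma_f}$ gives $\tilde{\iota}_f(\gamma_1)\tilde{\iota}_f(\gamma_2)=\tilde{\iota}_f(\gamma_1\gamma_2)\,[1_2,c_f(\gamma_1,\gamma_2)]$, and similarly $\tilde{\iota}_\infty(\gamma_1)\tilde{\iota}_\infty(\gamma_2)=\tilde{\iota}_\infty(\gamma_1\gamma_2)\,[1_2,c_\infty(\gamma_1,\gamma_2)]$ in $\wt{F_\infty}$, with $c_\infty$ as in Lemma~\ref{muliff}. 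The finite and infinite components commute in $\wt{\ma}$ and $[1_2,\pm1]$ is central, so multiplying these two identities and using $\tilde{\iota}(g)=\tilde{\iota}_f(g)\tilde{\iota}_\infty(g)$ yields $\tilde{\iota}(\gamma_1)\tilde{\iota}(\gamma_2)=\tilde{\iota}(\gamma_1\gamma_2)\,[1_2,\,c_f(\gamma_1,\gamma_2)c_\infty(\gamma_1,\gamma_2)]$. But $\tilde{\iota}$ is a homomorphism, so the central factor is trivial, and since $c_f,c_\infty\in\{\pm1\}$ this forces $c_f(\gamma_1,\gamma_2)=c_\infty(\gamma_1,\gamma_2)$. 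In substance this is just the Hilbert-symbol product formula, already packaged for us in the statement that $\mathrm{SL}_2(F)$ embeds canonically into $\wt{\ma}$.

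With that identity in hand the conclusion is immediate: using that $\lambda$ is genuine, i.e.\ $\lambda([1_2,\zeta]x)=\zeta\lambda(x)$ for $\zeta=\pm1$, I would compute $\mv_\lambda(\gamma_1)\mv_\lambda(\gamma_2)=\lambda(\tilde{\iota}_f(\gamma_1\gamma_2)[1_2,c_f(\gamma_1,\gamma_2)])=c_f(\gamma_1,\gamma_2)\,\mv_\lambda(\gamma_1\gamma_2)=c_\infty(\gamma_1,\gamma_2)\,\mv_\lambda(\gamma_1\gamma_2)$, which by Lemma~\ref{muliff} says precisely that $\mv_\lambda$ is a multiplier system of half-integral weight for $\Gamma$. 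The only place where anything real happens is the identity $c_f=c_\infty$; since the canonical embedding $\mathrm{SL}_2(F)\hookrightarrow\wt{\ma}$ is taken as known, even that reduces to bookkeeping with the central extensions, and I expect no genuine obstacle beyond keeping track of which central elements appear at which places.
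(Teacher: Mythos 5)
Your proposal is correct and follows essentially the same route as the paper: both derive the identity $\tilde{\iota}_f(\gamma_1)\tilde{\iota}_f(\gamma_2)=\tilde{\iota}_f(\gamma_1\gamma_2)[1_2,c_\infty(\gamma_1,\gamma_2)]$ from the fact that $\tilde{\iota}$ is a homomorphism together with the commutation of the finite and infinite components, and then conclude via genuineness of $\lambda$ and Lemma~\ref{muliff}. Your explicit introduction of $c_f$ and the observation that $c_f=c_\infty$ (the Hilbert-symbol product formula) is just a slightly more spelled-out version of the same computation.
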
 

\begin{proof} 
For $\gamma_1, \gamma_2 \in \Gamma$, we have 
$\tilde{\iota}(\gamma_1)\tilde{\iota}(\gamma_2)=\tilde{\iota}(\gamma_1\gamma_2)$. 
The left-hand side equals $\tilde{\iota}_f(\gamma_1)\tilde{\iota}_{\infty}(\gamma_1) 
\tilde{\iota}_f(\gamma_2)\tilde{\iota}_{\infty}(\gamma_2)$. 
Since $\tilde{\iota}_{\infty}(g)=([\iota_i(g)])_{i=1, \cdots, n}$ for $g \in$ $\mathrm{SL}_2(F)$
and $\tilde{\iota}_{\infty}(\gamma_1)$ commutes with $\tilde{\iota}_f(\gamma_2)$, 
we have $$\tilde{\iota}_f(\gamma_1)\tilde{\iota}_f(\gamma_2)
=\tilde{\iota}_f(\gamma_1\gamma_2)[1_2, c_{\infty}(\gamma_1, \gamma_2)]. $$ 
Since $\lambda$ is genuine, Lemma \ref{muliff} proves the lemma.
\end{proof} 

For $v<\infty$, the map $\mathbf{s}_v$ is the splitting on $K_1(4)_v$, where 
$$K_1(4)_v=\{\gamma=\begin{pmatrix} a & b \\ c & d \end{pmatrix} \in \mathrm{SL}_2(\mo_v) 
\mid c \equiv 0, d \equiv 1 \text{ mod~4}\}. $$ 
If $K_{\Gamma} \subset K_1(4)_f=\prod_{v<\infty} K_1(4)_v$, 
we may define a splitting $\mathbf{s}:K_{\Gamma} \to \wt{\ma}$ by 
\[
\mathbf{s}(\gamma)=(\mathbf{s}_v(\iota_v(\gamma)))_{v<\infty} \times ([1_2])_{v \mid \infty}. 
\]
We consider it as a homomorphism. 
Then we have $\tilde{K}_{\Gamma}=\mathbf{s}(K_{\Gamma}) \cdot \{[1_2, \pm1]\}$.
Note that $\mathbf{s}(K_{\Gamma}) \subset \wt{\ma_f}$ is a compact open subgroup. 

For any congruence subgroup $\Gamma$, a map $\mv_0:\Gamma \to \mac^{\times}$ 
is defined by $\mv_0(\gamma)=\prod_{v<\infty} s_v(\iota_v(\gamma))$, 
which is not always a multiplier system of half-integral weight for $\Gamma$. 
\begin{cor} \label{cor1} 
If $\Gamma \subset \Gamma_1(4)=\left\{\begin{pmatrix} a & b \\ c & d \end{pmatrix} \in 
\mathrm{SL}_2(\mo) \mid c \equiv 0, d \equiv 1 \text{ mod~4} \right\}$, 
then $\mv_0$ is a multiplier system of half-integral weight for $\Gamma$. 
\end{cor}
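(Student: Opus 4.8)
The plan is to produce a genuine character $\lambda$ of $\tilde{K}_{\Gamma}$ with $\mv_{\lambda}=\mv_0$ and then appeal to Lemma \ref{vlambda}. The hypothesis $\Gamma \subset \Gamma_1(4)$ enters exactly here: it forces $K_{\Gamma} \subset K_1(4)_f$, so the homomorphic splitting $\mathbf{s}:K_{\Gamma} \to \wt{\ma_f}$ is defined and $\tilde{K}_{\Gamma}=\mathbf{s}(K_{\Gamma})\cdot\{[1_2, \pm 1]\}$. Since $\mathbf{s}$ is a section of the covering over $K_{\Gamma}$, one has $\mathbf{s}(K_{\Gamma})\cap\{[1_2, \pm 1]\}=\{[1_2]\}$, so every element of $\tilde{K}_{\Gamma}$ is uniquely of the form $\mathbf{s}(k)[1_2, \tau]$ with $k \in K_{\Gamma}$ and $\tau \in \{\pm 1\}$. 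I would then set $\lambda(\mathbf{s}(k)[1_2, \tau])=\tau$. Because $\mathbf{s}$ is a homomorphism and $[1_2, \pm 1]$ is central in $\wt{\ma_f}$, this is a well-defined homomorphism $\tilde{K}_{\Gamma}\to\mac^{\times}$, and it is genuine since $\lambda([1_2, -1]\mathbf{s}(k)[1_2, \tau])=-\tau$.

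Next I would identify $\mv_{\lambda}$. For $\gamma \in \Gamma$ the element $\tilde{\iota}_f(\gamma)=([\iota_v(\gamma)])_{v<\infty}\times([1_2])_{v\mid\infty}$ lies in $\tilde{K}_{\Gamma}$, and so does $\mathbf{s}(\iota_f(\gamma))=([\iota_v(\gamma), s_v(\iota_v(\gamma))])_{v<\infty}\times([1_2])_{v\mid\infty}$; both project to $\iota_f(\gamma)$. Comparing them inside the restricted product $\prod_{v}' \wt{F_v}$ and using $c_v(\iota_v(\gamma), 1_2)=1$, one gets $([\iota_v(\gamma), s_v(\iota_v(\gamma))])_v \cdot ([1_2, s_v(\iota_v(\gamma))])_v=([\iota_v(\gamma)])_v$; since $s_v(\iota_v(\gamma))=1$ for all but finitely many $v$, the collection $([1_2, s_v(\iota_v(\gamma))])_v$ maps to the central element $[1_2, \prod_{v<\infty}s_v(\iota_v(\gamma))]$ of $\wt{\ma_f}$. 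Hence $\tilde{\iota}_f(\gamma)=\mathbf{s}(\iota_f(\gamma))\,[1_2, \prod_{v<\infty}s_v(\iota_v(\gamma))]$, so $\mv_{\lambda}(\gamma)=\lambda(\tilde{\iota}_f(\gamma))=\prod_{v<\infty}s_v(\iota_v(\gamma))=\mv_0(\gamma)$. Now Lemma \ref{vlambda} gives that $\mv_0=\mv_{\lambda}$ is a multiplier system of half-integral weight for $\Gamma$.

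I do not expect any serious difficulty here; the one point that needs care is the adelic bookkeeping — checking that $\lambda$ is well-defined (which is exactly where $\mathbf{s}$ must be a section on all of $K_{\Gamma}$, hence where $\Gamma \subset \Gamma_1(4)$ is used) and tracking how the finitely many nontrivial local signs $s_v(\iota_v(\gamma))$ coalesce into the central element of $\wt{\ma_f}$. An alternative route avoiding $\lambda$ altogether is to verify the cocycle identity of Lemma \ref{muliff} directly: the fact that $\mathbf{s}_v$ is a homomorphism on $K_1(4)_v$ is equivalent to $c_v(\iota_v(\gamma_1), \iota_v(\gamma_2))=s_v(\iota_v(\gamma_1))\,s_v(\iota_v(\gamma_2))\,s_v(\iota_v(\gamma_1\gamma_2))$ for $v<\infty$, and taking the product over all finite $v$ together with the product formula for the Kubota cocycles (the same Hilbert reciprocity that makes $\tilde{\iota}$ a homomorphism) turns this into $\mv_0(\gamma_1)\mv_0(\gamma_2)=c_{\infty}(\gamma_1, \gamma_2)\mv_0(\gamma_1\gamma_2)$.
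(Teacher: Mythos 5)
Your argument is correct and is essentially the paper's own proof: define the genuine character $\lambda$ on $\tilde{K}_{\Gamma}=\mathbf{s}(K_{\Gamma})\cdot\{[1_2,\pm 1]\}$ by $\lambda(\mathbf{s}(k)[1_2,\tau])=\tau$, observe that $\tilde{\iota}_f(\gamma)=\mathbf{s}(\iota_f(\gamma))[1_2,\mv_0(\gamma)]$ so that $\mv_{\lambda}=\mv_0$, and invoke Lemma \ref{vlambda}. Your additional checks (uniqueness of the decomposition, genuineness, the coalescence of the local signs into the central element) only make explicit what the paper leaves implicit.
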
 

\begin{proof} 
Since $\Gamma \subset \Gamma_1(4)$, we have $K_{\Gamma} \subset K_1(4)_f$. 
We define a genuine character $\lambda:\tilde{K}_{\Gamma} \to \mac^{\times}$ by 
\[
\lambda(\mathbf{s}(k)[1_2, \tau])=\tau, \hspace{15pt} 
k \in K_{\Gamma}, \tau \in \{\pm1\}. 
\]
Put $\mv_{\lambda}(\gamma)=\lambda(\tilde{\iota}_f(\gamma))$ for $\gamma \in \Gamma$. 
Since $\mathbf{s}(\gamma)=([\iota_v(\gamma), s_v(\iota_v(\gamma))])_{v<\infty}$, 
we have 
\[
\mv_{\lambda}(\gamma)=\lambda(\mathbf{s}(\gamma)[1_2, \mv_0(\gamma)])
=\mv_0(\gamma). 
\] 
Therefore Lemma \ref{vlambda} proves the corollary. 
\end{proof} 

Now suppose that $\Gamma \subset$ $\mathrm{SL}_2(\mo)$ is a congruence subgroup 
and that $\mv:\Gamma \to \mac^{\times}$ is a multiplier system of half-integral weight. 

\begin{lem} \label{weldef} 
There exists a genuine character $\lambda:\tilde{K}_{\Gamma} \to \mac^{\times}$ 
such that $\mv_{\lambda}=\mv$ 
if and only if there exists a congruence subgroup 
$\Gamma' \subset \Gamma \cap \Gamma_1(4)$ 
such that $\mv(\gamma)=\mv_0(\gamma)$ for any $\gamma \in \Gamma'$. 
\end{lem}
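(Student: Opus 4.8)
The plan is to transport the question onto the dense subgroup $D:=\tilde{\iota}_f(\Gamma)\cdot\{[1_2,\pm1]\}$ of $\tilde{K}_{\Gamma}$. First I record that $D$ really is a subgroup: since $c_{\infty}$ is $\{\pm1\}$-valued, the identity $\tilde{\iota}_f(\gamma_1)[1_2,\tau_1]\cdot\tilde{\iota}_f(\gamma_2)[1_2,\tau_2]=\tilde{\iota}_f(\gamma_1\gamma_2)[1_2,\tau_1\tau_2 c_{\infty}(\gamma_1,\gamma_2)]$ (established in the proof of Lemma \ref{vlambda}) shows closure; and $D$ is dense in $\tilde{K}_{\Gamma}$ because $\iota_f(\Gamma)$ is dense in $K_{\Gamma}$ and $D$ contains the whole fibre $\{[1_2,\pm1]\}$ over the identity. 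Next, the formula $\lambda_0(\tilde{\iota}_f(\gamma)[1_2,\tau])=\tau\,\mv(\gamma)$ is well defined (because $\iota_f$ is injective on $\mathrm{SL}_2(F)$), and by Lemma \ref{muliff} it is a genuine homomorphism $\lambda_0\colon D\to\mac^{\times}$. Since a genuine character is smooth, any genuine $\lambda$ with $\mv_{\lambda}=\mv$ satisfies $\lambda|_D=\lambda_0$, and conversely any continuous extension of $\lambda_0$ to $\overline{D}=\tilde{K}_{\Gamma}$ is a genuine character with $\mv_{\lambda}=\mv$. So the lemma is equivalent to: $\lambda_0$ is continuous on $D$ if and only if $\mv=\mv_0$ on some congruence subgroup $\Gamma'\subset\Gamma\cap\Gamma_1(4)$. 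Throughout I will use the identity $\tilde{\iota}_f(\gamma)=\mathbf{s}(\iota_f(\gamma))[1_2,\mv_0(\gamma)]$ for $\gamma\in\Gamma_1(4)$, which is just the definitions of $\mathbf{s}$ and $\mv_0$ unwound inside $\wt{\ma_f}$.

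For the ``if'' direction, assume $\mv|_{\Gamma'}=\mv_0|_{\Gamma'}$ with $\Gamma'\subset\Gamma\cap\Gamma_1(4)$. Then $K_{\Gamma'}=\overline{\iota_f(\Gamma')}\subset K_1(4)_f$, so $\mathbf{s}$ is a homomorphism on $K_{\Gamma'}$ and $W:=\mathbf{s}(K_{\Gamma'})$ is a compact open subgroup of $\wt{\ma_f}$ lying inside $\tilde{K}_{\Gamma}$ and containing the identity. Using $\iota_f^{-1}(K_{\Gamma'})=\Gamma'$ together with the identity above, one checks $D\cap W=\{\mathbf{s}(\iota_f(\gamma))\mid\gamma\in\Gamma'\}$, on which $\lambda_0$ takes the value $\mv_0(\gamma)\mv(\gamma)\equiv1$. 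Thus $W$ is an open neighbourhood of $1$ on which $\lambda_0$ is trivial, so $\lambda_0$ is continuous at $1$, hence continuous, hence extends to the desired genuine character of $\tilde{K}_{\Gamma}$.

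For the ``only if'' direction, let $\lambda$ be a genuine character of $\tilde{K}_{\Gamma}$ with $\mv_{\lambda}=\mv$; then $\lambda|_D=\lambda_0$, and, $\tilde{K}_{\Gamma}$ being profinite and $\lambda$ smooth, $\ker\lambda$ is an open subgroup not containing $[1_2,-1]$. Hence the covering map restricts to a topological isomorphism of $\ker\lambda$ onto an open subgroup $V\subset K_{\Gamma}$; that is, $\ker\lambda=\mathbf{s}'(V)$ for a continuous section $\mathbf{s}'$ of the metaplectic cover over $V$. Now I shrink. Choose a congruence subgroup $\Gamma_1\subset\Gamma\cap\Gamma_1(4)$ with $\iota_f(\Gamma_1)\subset V$; over $K_{\Gamma_1}\subset V\cap K_1(4)_f$ both $\mathbf{s}'$ and $\mathbf{s}$ are defined homomorphisms, so they differ by a continuous character $\eta\colon K_{\Gamma_1}\to\{\pm1\}$. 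Put $\Gamma':=\Gamma_1\cap\iota_f^{-1}(\ker\eta)$, again a congruence subgroup inside $\Gamma\cap\Gamma_1(4)$. Then for $\gamma\in\Gamma'$ we have $\mathbf{s}(\iota_f(\gamma))=\mathbf{s}'(\iota_f(\gamma))\in\ker\lambda$, so from $\tilde{\iota}_f(\gamma)=\mathbf{s}(\iota_f(\gamma))[1_2,\mv_0(\gamma)]$ we obtain $\mv(\gamma)=\lambda(\tilde{\iota}_f(\gamma))=\mv_0(\gamma)$, as required.

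The purely bookkeeping points — that $D$ is dense, that $\mathbf{s}(K_{\Gamma'})$ is a compact open subgroup of $\wt{\ma_f}$, that $\iota_f^{-1}(K_{\Gamma'})=\Gamma'$, that a finite intersection of congruence subgroups is a congruence subgroup — I expect to be routine. The step that needs real care is the comparison in the ``only if'' direction: extracting the canonical splitting $\mathbf{s}$ from the abstract splitting $\mathbf{s}'$ carved out of $\ker\lambda$, by descending to a sufficiently deep level and absorbing the discrepancy character $\eta$ into a further congruence subgroup. It is also precisely here that smoothness of $\lambda$ (equivalently, openness of $\ker\lambda$) is essential: without it every cocycle-compatible $\mv$ would extend and the statement would be false.
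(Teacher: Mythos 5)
Your proof is correct and follows essentially the same route as the paper's: both directions rest on the identity $\tilde{\iota}_f(\gamma)=\mathbf{s}(\iota_f(\gamma))[1_2,\mv_0(\gamma)]$ for $\gamma\in\Gamma\cap\Gamma_1(4)$, the openness of $\ker\lambda$ and of $\mathbf{s}(K_1(4)_f)$-type subgroups, and the cocycle relation of Lemma \ref{muliff}. The only difference is presentational: you package the construction of $\lambda$ as a continuous extension of $\lambda_0$ from the dense subgroup $D$, whereas the paper builds $\lambda$ explicitly on the decomposition $K_{\Gamma}=\iota_f(\Gamma)\cdot K_{\Gamma'}$ and checks well-definedness and multiplicativity by hand, after shrinking $K_{\Gamma'}$ so that $\mathbf{s}(K_{\Gamma'})$ is normal in $\tilde{K}_{\Gamma}$ --- the one small step your ``hence extends'' silently absorbs (it is justified because $\tilde{K}_{\Gamma}$ is profinite, so $W$ may be shrunk to an open normal subgroup).
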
 

\begin{proof} 
Suppose that there exists a genuine character 
$\lambda:\tilde{K}_{\Gamma} \to \mac^{\times}$ such that $\mv_{\lambda}=\mv$. 
Since Ker~$\lambda$ and $\mathbf{s}(\Gamma_1(4))$ are open in $\wt{\ma_f}$, 
the intersection is also open. We denote its image in $\mathrm{SL}_2(\ma_f)$ by $K'$. 
Then we have $\tilde{K'}=\mathbf{s}(K') \times \{[1_2, \pm1]\}$. 
Put $\Gamma'=\iota_f^{-1}(K')$. 
Then we have 
$\mv(\gamma)=\mv_{\lambda}(\gamma)=\mv_0(\gamma)$ for any $\gamma \in \Gamma'$. 

Conversely, suppose that there exists a congruence subgroup 
$\Gamma' \subset \Gamma \cap \Gamma_1(4)$ 
such that $\mv(\gamma)=\mv_0(\gamma)$ for any $\gamma \in \Gamma'$. 
Then the closure $K_{\Gamma'}$ of $\iota_f(\Gamma')$ in $\mathrm{SL}_2(\ma_f)$ is 
a compact open subgroup. 
%and we have $\Gamma'=K_{\Gamma'} \cdot \mathrm{SL}_2(F_{\infty}) \cap$ $\mathrm{SL}_2(F)$. 
Since $\iota_f(\Gamma)$ is dense and $K_{\Gamma'}$ is open in $K_{\Gamma}$, 
we have $K_{\Gamma}=\iota_f(\Gamma) \cdot K_{\Gamma'}$. 
For $k \in \tilde{K}_{\Gamma}$, there exist $\gamma \in \Gamma$, $k' \in K_{\Gamma'}$ and 
$\tau \in \{\pm1\}$ such that $k=\tilde{\iota}_f(\gamma)\mathbf{s}(k')[1_2, \tau]$. 

We assume that $k$ also equals $\tilde{\iota}_f(\gamma_0)\mathbf{s}(k_0')[1_2, \tau_0]$ 
for $\gamma_0 \in \Gamma$, $k_0' \in K_{\Gamma'}$ and $\tau_0 \in \{\pm1\}$. 
Put $\omega=\gamma_0^{-1}\gamma$. 
Then we have $\omega \in \Gamma'$ and 
$$\tilde{\iota}_f(\gamma)=
\tilde{\iota}_f(\gamma_0)\tilde{\iota}_f(\omega)[1_2, c_{\infty}(\gamma_0, \omega)], 
\hspace{15pt} \tilde{\iota}_f(\omega)= \mathbf{s}(\iota_f(\omega))[1_2, \mv_0(\omega)]. $$ 
Then we have $k=\tilde{\iota}_f(\gamma) \mathbf{s}(k')[1_2, \tau]
=\tilde{\iota}_f(\gamma_0)\mathbf{s}(\iota_f(\omega)k')
[1_2, \tau \mv_0(\omega)c_{\infty}(\gamma_0, \omega)]$. 
Thus we have $k_0'=\iota_f(\omega)k'$ 
and $\tau_0=\tau \mv_0(\omega)c_{\infty}(\gamma_0, \omega)$. 
Since $\mv=\mv_0$ in $\Gamma'$ and  
$\mv(\gamma)=\mv(\gamma_0)\mv(\omega)c_{\infty}(\gamma_0, \omega)$ 
by Lemma \ref{muliff}, we have $\mv(\gamma_0)\tau_0=\mv(\gamma)\tau$. 
Then a function $\lambda(k)=\mv(\gamma)\tau$ is well-defined. 

Since $\lambda(k[1_2, \sigma])=\mv(\gamma)\tau\sigma=\sigma \lambda(k)$ 
for $\sigma \in \{\pm 1\}$, $\lambda$ is genuine. 
It suffices to show that $\lambda(k_1k_2)=\lambda(k_1)\lambda(k_2)$ 
for any $k_1, k_2 \in \tilde{K}_{\Gamma}$. 
There exist $\gamma_i \in \Gamma$, $k_i' \in K_{\Gamma'}$ and 
$\tau_i \in \{\pm1\}$ such that $k_i=\tilde{\iota}_f(\gamma_i)\mathbf{s}(k_i')[1_2, \tau_i]$ 
for $i=1, 2$. 
Then we have $\lambda(k_1)\lambda(k_2)=\mv(\gamma_1)\mv(\gamma_2)\tau_1\tau_2$. 
Replacing $K_{\Gamma'}$ with its sufficiently small subgroup, 
we may assume that $\mathbf{s}(K_{\Gamma'})$ is a normal subgroup of 
$\tilde{K}_{\Gamma}$. 
Then we have 
\[
\tilde{\iota}_f(\gamma_2)^{-1}\mathbf{s}(k_1')\tilde{\iota}_f(\gamma_2)
=\mathbf{s}(\iota_f(\gamma_2)^{-1}k_1'\iota_f(\gamma_2)) \in \mathbf{s}(K_{\Gamma'}).
\]
Since $\tilde{\iota}_f(\gamma_1)\tilde{\iota}_f(\gamma_2)
=\tilde{\iota}_f(\gamma_1\gamma_2)[1_2, c_{\infty}(\gamma_1, \gamma_2)]$, 
$\lambda(k_1k_2)$ equals
$\mv(\gamma_1\gamma_2)c_{\infty}(\gamma_1, \gamma_2)\tau_1\tau_2$. 
By Lemma \ref{muliff}, we have $\lambda(k_1k_2)=\lambda(k_1)\lambda(k_2)$, 
which proves the lemma. 
\end{proof} 

\begin{prop} \label{notmq}
If $F \ne \mq$, then any multiplier system $\mv$ of half-integral weight 
of any congruence subgroup $\Gamma \subset \mathrm{SL}_2(\mo)$ is obtained 
from a genuine character of $\tilde{K}_{\Gamma}$. 
\end{prop}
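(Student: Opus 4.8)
The plan is to reduce, via Lemma~\ref{weldef}, to showing that $\mv$ and $\mv_0$ coincide on some congruence subgroup, and then to deduce this from the commutator identities of Section~\ref{local} together with the fact that $F\ne\mq$ forces the unit group $\mo^{\times}$ to be infinite. Set $\Delta=\Gamma\cap\Gamma_1(4)$, again a congruence subgroup, and fix an integral ideal $\mathfrak{m}\subset 4\mo$ with $\Gamma(\mathfrak{m})\subset\Delta$. By Corollary~\ref{cor1} the restriction $\mv_0|_{\Delta}$ is a multiplier system of half-integral weight, and so of course is $\mv|_{\Delta}$; since by Lemma~\ref{muliff} both obey the cocycle relation with the \emph{same} factor $c_\infty$, the quotient $\chi:=(\mv/\mv_0)|_{\Delta}:\Delta\to\mac^{\times}$ is an ordinary group homomorphism. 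By Lemma~\ref{weldef} it now suffices to find a congruence subgroup $\Gamma'\subset\Delta$ with $\chi|_{\Gamma'}\equiv 1$.

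Since $\chi$ has abelian target it factors through $\Delta^{\mathrm{ab}}$, so it is enough to know that $\Delta^{\mathrm{ab}}$ is finite; this is exactly where the hypothesis $F\ne\mq$ enters, and it is a standard consequence of the congruence subgroup problem for $\mathrm{SL}_2$ over a ring of integers with infinitely many units (Serre; Bass--Milnor--Serre; Vaserstein). The underlying mechanism is elementary: $\chi$ kills commutators, and since $\mo^{\times}$ is infinite we may pick a unit $u\in\mo^{\times}\cap(1+\mathfrak{m})$ of infinite order; then $\mathrm{diag}(u,u^{-1})$ and, for every $b\in\mathfrak{m}$, the matrices $\begin{pmatrix}1&b\\0&1\end{pmatrix}$ and $\begin{pmatrix}1&0\\b&1\end{pmatrix}$ all lie in $\Gamma(\mathfrak{m})\subset\Delta$, and the identities $[\,\mathrm{diag}(u,u^{-1}),\begin{pmatrix}1&b\\0&1\end{pmatrix}\,]=\begin{pmatrix}1&(u^2-1)b\\0&1\end{pmatrix}$ and $[\,\mathrm{diag}(u,u^{-1}),\begin{pmatrix}1&0\\b&1\end{pmatrix}\,]=\begin{pmatrix}1&0\\(u^{-2}-1)b&1\end{pmatrix}$ (the computation behind (\ref{sub})) show that $\chi$ is trivial on all unipotent matrices whose off-diagonal entry lies in the \emph{nonzero} ideal $\mathfrak{c}:=(u^2-1)\mathfrak{m}$; these unipotents, together with the relevant diagonal matrices, normally generate a finite-index subgroup of $\mathrm{SL}_2(\mo)$ by the results cited above. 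In any case, $\Delta^{\mathrm{ab}}$ being finite, $\chi$ has finite image, so $\ker\chi$ has finite index in $\Delta$, hence in $\mathrm{SL}_2(\mo)$.

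Finally, by the congruence subgroup property once more, the finite-index subgroup $\ker\chi$ contains a congruence subgroup $\Gamma'$. Then $\Gamma'\subset\Delta\subset\Gamma\cap\Gamma_1(4)$ and $\mv=\mv_0$ on $\Gamma'$, so Lemma~\ref{weldef} completes the proof. The one genuinely non-elementary ingredient is the passage from ``$\chi$ is trivial on an elementary subgroup'' to ``$\chi$ is trivial on a congruence subgroup'', which rests on the congruence subgroup property; this is precisely where $F\ne\mq$ is indispensable, for over $\mq$ the principal congruence subgroups $\Gamma(N)$ with $N\ge 3$ are free of positive rank, so $\Gamma(N)^{\mathrm{ab}}$ is infinite and supports homomorphisms to $\mac^{\times}$ of infinite order---the multiplier system of $\eta$ being the basic example---and the statement genuinely fails there.
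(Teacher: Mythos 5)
Your proof is correct and follows essentially the same route as the paper's: reduce via Lemma~\ref{weldef} to matching $\mv$ with $\mv_0$ on a congruence subgroup, observe that $\mv/\mv_0$ is an ordinary homomorphism on $\Gamma\cap\Gamma_1(4)$ (Corollary~\ref{cor1} and Lemma~\ref{muliff}), and invoke the congruence subgroup property for $F\ne\mq$. The paper phrases the last step slightly more directly---the derived group $D(\Gamma)$ itself contains a congruence subgroup by Serre's theorem---whereas you pass through finiteness of the abelianization and apply the congruence subgroup property to $\ker(\mv/\mv_0)$, but this is the same ingredient.
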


\begin{proof}
By Lemma \ref{weldef}, it suffices to show that there exists a congruence subgroup 
$\Gamma' \subset \Gamma \cap \Gamma_1(4)$ 
such that $\mv(\gamma)=\mv_0(\gamma)$ for any $\gamma \in \Gamma'$. 
We assume that a congruence subgroup $\Gamma$ satisfies 
$\Gamma \subset \Gamma_1(4)$ by replacing $\Gamma$ with $\Gamma \cap \Gamma_1(4)$. 
Since $\mv_0(\gamma)/\mv(\gamma)$ is a character of $\Gamma$, 
we have $\mv_0(\gamma)/\mv(\gamma)=1$ for any $\gamma \in D(\Gamma)$. 
By the congruence subgroup property, $D(\Gamma)$ contains 
a congruence subgroup $\Gamma'$ (see \cite[Corollary 3 of Theorem 2]{Se2} or \cite[\S 3]{Ki1}). 
Thus we have $\mv(\gamma)=\mv_0(\gamma)$ for any $\gamma \in \Gamma'$, 
which proves this proposition. 
\end{proof}

By Lemma \ref{weldef} and Proposition \ref{notmq}, the multiplier system of half-integral 
weight of a congruence subgroup $\Gamma$ associated to an automorphy factor 
in the sense of Shimura \cite{Sh1} is obtained from a genuine character of $\tilde K_\Gamma$.

\begin{lem} \label{mv0} 
If $F=\mq$, then we have 
\[
\mv_0(g)=\begin{cases}\left(\dfrac{d}{c}\right)^* & c:\text{ odd} \vspace{5pt} \\
\left(\dfrac{c}{d}\right)_* & c:\text{ even}, \end{cases} \hspace{15pt} 
g=\begin{pmatrix} a & b \\ c & d \end{pmatrix} \in \mathrm{SL}_2(\mz). 
\]
\end{lem}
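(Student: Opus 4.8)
The plan is to unwind the definition of $\mv_0$ and evaluate the resulting finite product of local Hilbert symbols, splitting according to the parity of $c$. For $F=\mq$ the finite places are the rational primes $p$, with $F_p=\mq_p$, $\mo_p=\mz_p$, $\map_p=p\mz_p$, and $\mv_0(g)=\prod_{p<\infty}s_p(\iota_p(g))$ with $s_p$ the map in (\ref{split}); since $ad-bc=1$ we have $\gcd(c,d)=1$. First I would dispose of the case $c=0$: then $d=\pm1$ and $s_p(\iota_p(g))=\langle -1,d\rangle_{\mq_p}$, which is $1$ at every $p$ when $d=1$, while for $d=-1$ Hilbert reciprocity gives $\prod_{p<\infty}\langle -1,-1\rangle_{\mq_p}=\langle -1,-1\rangle_{\mr}^{-1}=-1$; these match the conventions $\left(\frac{0}{1}\right)_*=1$ and $\left(\frac{0}{-1}\right)_*=-1$. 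For $c\neq 0$, a prime $p\nmid c$ has $c\in\mz_p^{\times}$, so $s_p(\iota_p(g))=1$; hence $\mv_0(g)=\prod_{p\mid c}\langle c,d\rangle_{\mq_p}$, and each such $p$ satisfies $p\nmid d$.

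When $c$ is odd every $p\mid c$ is odd, and the symbol is tame: writing $c=p^{\alpha}c'$ with $p\nmid c'$ and using multiplicativity together with $\langle u,v\rangle_{\mq_p}=1$ for $p$-adic units $u,v$ at odd $p$ (which also kills the sign of $c$, since $\langle -1,d\rangle_{\mq_p}=1$), one finds $\langle c,d\rangle_{\mq_p}=\langle p,d\rangle_{\mq_p}^{\alpha}=\left(\frac{d}{p}\right)^{\mathrm{ord}_p c}$ with $\left(\frac{d}{p}\right)$ the Legendre symbol. Multiplying over $p\mid c$ and invoking the definition of the Jacobi symbol with odd positive modulus $|c|$ gives $\mv_0(g)=\left(\frac{d}{|c|}\right)=\left(\frac{d}{c}\right)^{*}$, the case $c=\pm1$ being the empty product $1$.

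When $c$ is even, $d$ is odd, and here I would push the computation away from the prime $2$, whose local symbol is inconvenient, by using the product formula. The symbol $\langle c,d\rangle_{\mq_p}$ is nontrivial only at $\infty$, at $2$, at the odd $p\mid c$, and at the odd $p\mid d$; since $\mv_0(g)=\langle c,d\rangle_{\mq_2}\prod_{\text{odd }p\mid c}\langle c,d\rangle_{\mq_p}$, the identity $\prod_v\langle c,d\rangle_v=1$ yields
\[
\mv_0(g)=\langle c,d\rangle_{\mr}\cdot\prod_{\text{odd }p\mid d}\langle c,d\rangle_{\mq_p}.
\]
For odd $p\mid d$ write $d=p^{\beta}d'$ with $p\nmid d'$; then $\langle c,d\rangle_{\mq_p}=\langle c,p\rangle_{\mq_p}^{\beta}=\left(\frac{c}{p}\right)^{\mathrm{ord}_p d}$, so the product over $p\mid d$ is the Jacobi symbol $\left(\frac{c}{|d|}\right)=\left(\frac{c}{d}\right)^{*}$. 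Finally $\langle c,d\rangle_{\mr}=-1$ precisely when $c<0$ and $d<0$, i.e.\ $\langle c,d\rangle_{\mr}=t(c,d)$, so $\mv_0(g)=t(c,d)\left(\frac{c}{d}\right)^{*}=\left(\frac{c}{d}\right)_{*}$.

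The only delicate step is the even case: I deliberately never evaluate $\langle c,d\rangle_{\mq_2}$ directly, trading it via Hilbert reciprocity for the symbol at $\infty$ together with symbols at the odd primes dividing $d$. This is exactly what makes the factor $t(c,d)=\langle c,d\rangle_{\mr}$ emerge, and it accounts for the $(\cdot/\cdot)_{*}$ versus $(\cdot/\cdot)^{*}$ distinction between the two cases. The remaining effort is routine bookkeeping at the boundary values $c=0,\pm1$, settled by the conventions fixed in Section \ref{intro}.
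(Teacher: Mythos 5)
Your proof is correct, and for the substantive case ($c$ even) it takes a genuinely different route from the paper. Both arguments begin identically: dispose of the boundary cases $c=0$ and $c=\pm 1$ via the conventions, observe that $s_p(\iota_p(g))=1$ for $p\nmid c$, reduce to $\mv_0(g)=\prod_{p\mid c}\langle c,d\rangle_{\mq_p}$, and evaluate the tame symbols at odd $p\mid c$ to get $\left(\frac{d}{c}\right)^*$ when $c$ is odd. Where you diverge is the even case: the paper computes $\langle c,d\rangle_{\mq_2}$ explicitly as $t_0(u,d)\left(\frac{2}{|d|}\right)^{\mathrm{ord}_2 c}$ with $u$ the odd part of $c$ and $t_0(x,y)=(-1)^{(x-1)(y-1)/4}$, and then converts $t_0(u,d)\left(\frac{2}{|d|}\right)^{\mathrm{ord}_2 c}\left(\frac{d}{|u|}\right)$ into $\left(\frac{c}{d}\right)_*$ by invoking the classical Jacobi reciprocity identity $\left(\frac{u}{|d|}\right)\left(\frac{d}{|u|}\right)=t(c,d)\,t_0(u,d)$ from Knopp; you instead never touch the $2$-adic symbol, trading the places dividing $c$ for the place at infinity and the odd primes dividing $d$ via the Hilbert product formula $\prod_v\langle c,d\rangle_v=1$, which lands directly on $t(c,d)\left(\frac{c}{|d|}\right)$. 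The two are of course equivalent in content, since the product formula over $\mq$ encodes quadratic reciprocity together with its supplements, but your version is cleaner in that the factor $t(c,d)=\langle c,d\rangle_{\mr}$ appears for a structural reason (it is the archimedean symbol) rather than falling out of sign bookkeeping, and it avoids quoting the explicit wild symbol at $2$; the paper's version has the mild advantage of being self-contained at the level of classical Jacobi-symbol identities and of exhibiting the local factor at $2$ explicitly, which is in the spirit of the explicit formulas (\ref{kappa2}) used later. Your handling of the boundary cases and of the empty products at $c=\pm1$, $d=\pm1$ is also correct.
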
 

\begin{proof} 
In the case $(c, d)=(\pm1, 0)$, we have $\left(\dfrac{0}{c}\right)^*=\mv_0(g)=1$. 
In the case $(c, d)=(0, 1)$ (resp. $(0, -1)$), 
we have $\left(\dfrac{0}{d}\right)_*=\mv_0(g)=1$(resp. $-1$). 
If $c \ne 0$ and $d \in 2\mz+1$ satisfy $(c, d)=1$, we have 
\[
\left(\frac{c}{d}\right)^*=\left(\frac{c}{|d|}\right), \hspace{15pt} 
\left(\frac{c}{d}\right)_*=t(c, d) \left(\frac{c}{|d|}\right), \hspace{15pt} 
t(c, d)=\begin{cases}-1 & c, d<0 \\ 1 & \text{otherwise. } \end{cases} 
\] 

Suppose that $cd \ne 0$. 
Put $u=c \cdot 2^{-\mathrm{ord}_2c}$. Then we have $(u, d)=(c, d)=1$. 
Put $t_0(x, y)=(-1)^{(x-1)(y-1)/4}$ for $x, y \in 2\mz+1$. 
If a prime $p$ satisfies $p \mid c$, we have 
\[
\langle c, d \rangle_p=\begin{cases}\left(\dfrac{d}{p}\right)^{\mathrm{ord}_pc}&p\geq 3\\ 
t_0(u, d) \left(\dfrac{2}{|d|}\right)^{\mathrm{ord}_2c}&p=2. \end{cases}
\]
If $c$ is odd, then we have 
$\left(\dfrac{d}{c}\right)^*=\prod_{p|c} \left(\dfrac{d}{p}\right)^{\text{ord}_pc}
=\mv_0(g)$. 
If $c$ is even, then we have 
$\left(\dfrac{u}{|d|}\right)\left(\dfrac{d}{|u|}\right)=t(c, d)t_0(u, d)$ 
(see \cite[p.51]{Kn1}). %Chapter 4, Lemma 1
Thus we have 
\[
\left(\dfrac{c}{d}\right)_*
=t(c, d) \left(\dfrac{2}{|d|}\right)^{\mathrm{ord}_2c}\left(\dfrac{u}{|d|}\right)
=t_0(u, d) \left(\frac{2}{|d|} \right)^{\mathrm{ord}_2c} \left(\frac{d}{|u|} \right)=\mv_0(g). 
\]
\end{proof}

Put 
\[
K_f=\prod_{v<\infty} \mathrm{SL}_2(\mo_v). 
\]
Then $K_f$ is a compact open group of $\mathrm{SL}_2(\ma_f)$.
The inverse image of $K_f$ in $\wt{\ma_f}$ is denoted by $\tilde{K_f}$. 
We have 
$\mathrm{SL}_2(\mo)=\mathrm{SL}_2(F) \cap K_f \cdot\mathrm{SL}_2(F_{\infty})$.

\begin{prop} \label{lift} 
Let $\mv$ be a multiplier system of half-integral weight for $\mathrm{SL}_2(\mo)$. 
Then there exists a genuine character 
$\lambda:\tilde{K}_f \to \mac^{\times}$ such that $\mv_{\lambda}=\mv$. 
\end{prop}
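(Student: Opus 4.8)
The plan is to reduce the statement to a claim about $\tilde K_\Gamma$ for $\Gamma=\mathrm{SL}_2(\mo)$, dispose of the case $F\neq\mq$ by the previous proposition, and then settle the case $F=\mq$ by an explicit computation with the $\eta$-multiplier. First I would show that for $\Gamma=\mathrm{SL}_2(\mo)$ one has $\tilde K_\Gamma=\tilde K_f$: since $\iota_f(\mathrm{SL}_2(\mo))=\iota_f(\mathrm{SL}_2(F))\cap K_f$ and $\iota_f(\mathrm{SL}_2(F))$ is dense in $\mathrm{SL}_2(\ma_f)$ by strong approximation (equivalently by the identity $\mathrm{SL}_2(\mo)=\mathrm{SL}_2(F)\cap K_f\cdot\mathrm{SL}_2(F_\infty)$ recalled above, together with the noncompactness of $\mathrm{SL}_2(F_\infty)$), the group $\iota_f(\mathrm{SL}_2(\mo))$ is dense in the open compact subgroup $K_f$, so $K_\Gamma=K_f$ and hence $\tilde K_\Gamma=\tilde K_f$. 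It therefore suffices to produce a genuine character $\lambda$ of $\tilde K_\Gamma$ with $\mv_\lambda=\mv$. If $F\neq\mq$, this is exactly Proposition \ref{notmq}, and we are done.

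So assume $F=\mq$, i.e.\ $\Gamma=\mathrm{SL}_2(\mz)$. By Lemma \ref{weldef} it is enough to exhibit a congruence subgroup $\Gamma'\subset\mathrm{SL}_2(\mz)\cap\Gamma_1(4)=\Gamma_1(4)$ on which $\mv=\mv_0$, and I claim that $\Gamma'=\Gamma(24)$ works. By \eqref{form} and $\tilde j([g],z)=J(g,z)$, the function $\mv_\eta$ is a multiplier system of half-integral weight for $\mathrm{SL}_2(\mz)$ (as $\eta$ is nowhere zero on $\mh$, the factor $\eta(\gamma z)/\eta(z)$ is an automorphy factor), so $\mv\,\mv_\eta^{-1}$ is an ordinary character of $\mathrm{SL}_2(\mz)$; every such character factors through $\mathrm{SL}_2(\mz/12\mz)$ (because $\mathrm{SL}_2(\mz)^{\mathrm{ab}}$ has order $12$ and $\mathrm{SL}_2(\mz)\to\mathrm{SL}_2(\mz/12\mz)$ induces an isomorphism on abelianizations), hence is trivial on $\Gamma(12)$ and a fortiori on $\Gamma(24)$. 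On the other hand, for $\gamma\in\Gamma_1(4)$ the entry $c$ is even, so Lemma \ref{mv0} gives $\mv_0(\gamma)=\left(\frac{c}{d}\right)_*$ while \eqref{etamul} gives $\mv_\eta(\gamma)=\left(\frac{c}{d}\right)_*e\!\left(\tfrac{(a+d)c-bd(c^2-1)+3d-3-3cd}{24}\right)$; thus $\mv_\eta\,\mv_0^{-1}$ restricted to $\Gamma_1(4)$ is $\gamma\mapsto e\!\left(\tfrac{(a+d)c-bd(c^2-1)+3d-3-3cd}{24}\right)$, which equals $1$ on $\Gamma(24)$ by a direct congruence check: if $a\equiv d\equiv1$ and $b\equiv c\equiv0\pmod{24}$, then $(a+d)c$, $bd(c^2-1)$, $3d-3$, and $3cd$ are all $\equiv0\pmod{24}$. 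Combining the two factorizations gives $\mv=(\mv\,\mv_\eta^{-1})(\mv_\eta\,\mv_0^{-1})\mv_0=\mv_0$ on $\Gamma(24)$, and since $\Gamma(24)\subset\Gamma_1(4)$, Lemma \ref{weldef} yields the required genuine character $\lambda$ of $\tilde K_\Gamma=\tilde K_f$.

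The main obstacle is precisely the case $F=\mq$: the proof of Proposition \ref{notmq} relies on the congruence subgroup property, which fails for $\mathrm{SL}_2(\mz)$, so instead one must compare an arbitrary half-integral-weight multiplier system of $\mathrm{SL}_2(\mz)$ with the reference system $\mv_0$ by routing through the explicit $\eta$-multiplier \eqref{etamul} and the level-$24$ congruence check above. Everything else — strong approximation to identify $\tilde K_\Gamma$ with $\tilde K_f$, the passage between $\tilde j$ and $J$, and the final application of Lemma \ref{weldef} — is routine bookkeeping.
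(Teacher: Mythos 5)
Your proof is correct and follows essentially the same route as the paper: the case $F\neq\mq$ is delegated to Proposition \ref{notmq}, and for $F=\mq$ one compares $\mv$ with $\mv_0$ via the explicit $\eta$-multiplier on a small principal congruence subgroup and then invokes Lemma \ref{weldef}. The only divergence is that you verify $\mv_\eta=\mv_0$ on $\Gamma(24)$ rather than on $\Gamma(12)$ as the paper asserts, and your choice is in fact the careful one: on $\Gamma(12)$ the exponent in (\ref{etamul}) is only guaranteed to be divisible by $12$, not $24$ (e.g.\ $\mv_\eta\bigl(\left(\begin{smallmatrix}1&12\\0&1\end{smallmatrix}\right)\bigr)=e(12/24)=-1\neq 1=\mv_0\bigl(\left(\begin{smallmatrix}1&12\\0&1\end{smallmatrix}\right)\bigr)$), whereas your level-$24$ congruence check and the subsequent application of Lemma \ref{weldef} go through verbatim.
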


\begin{proof} 
If $F \ne \mq$, the assertion is proved by Proposition \ref{notmq}. 
If $F=\mq$, let $\mv_{\eta}$ be the multiplier system of $\eta(z)$ in (\ref{etamul}). 
Put 
$$\Gamma(12)=\left\{ \begin{pmatrix}a & b \\ c & d \end{pmatrix} \in \mathrm{SL}_2(\mz) 
\mid a \equiv d \equiv 1, b \equiv c \equiv 0 \text{ mod~12}\right\}. $$ 
By Lemma \ref{mv0}, we have $\mv_{\eta}(\gamma)=\mv_0(\gamma)$ 
for $\gamma \in \Gamma(12)$. 
Since $\mv_{\eta}(\gamma)/\mv(\gamma)=1$ for any $\gamma \in D(\mathrm{SL}_2(\mz))$, 
we have $\mv(\gamma)=\mv_0(\gamma)$ for any 
$\gamma \in D(\mathrm{SL}_2(\mz)) \cap \Gamma(12)$, which is a congruence subgroup. 
By Lemma \ref{weldef}, there exists a genuine character 
$\lambda:\tilde{K}_f \to \mac^{\times}$ such that $\mv_{\lambda}=\mv$.  
\end{proof} 

\begin{cor} \label{cor2} 
There exists a multiplier system $\mv$ of half-integral weight for $\mathrm{SL}_2(\mo)$ 
if and only if 2 splits completely in $F/\mq$. 
There exists a genuine character of $\wt{\mo_v}$ for any $v<\infty$, 
provided that this condition holds. 
\end{cor}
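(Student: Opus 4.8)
The plan is to deduce the corollary from Proposition~\ref{lift}, Lemma~\ref{vlambda}, and the local classification of genuine characters carried out in Section~\ref{local}. Applying Lemma~\ref{vlambda} to $\Gamma=\mathrm{SL}_2(\mo)$, so that $K_\Gamma=K_f$, and combining it with Proposition~\ref{lift}, one sees that a multiplier system $\mv$ of half-integral weight for $\mathrm{SL}_2(\mo)$ exists if and only if there is a genuine character $\lambda\colon\tilde{K}_f\to\mac^{\times}$. Hence everything reduces to deciding when $\tilde{K}_f$ admits a genuine character.

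I would first prove the intermediate equivalence: $\tilde{K}_f$ admits a genuine character if and only if $\wt{\mo_v}$ admits one for every $v<\infty$. For the ``only if'' direction, the canonical map $\prod'_{v<\infty}\wt{F_v}\to\wt{\ma_f}$ restricts, on the $v$-th factor, to an embedding $\wt{\mo_v}\hookrightarrow\tilde{K}_f$ that sends $[1_2,-1]$ to the central element $[1_2,-1]$; so restricting a genuine $\lambda$ to this copy of $\wt{\mo_v}$ yields a genuine character of $\wt{\mo_v}$. For the ``if'' direction, choose genuine characters $\lambda_v$ of $\wt{\mo_v}$ for all $v<\infty$. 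By Lemma~\ref{eps}, $\lambda_v=\epsilon_v$ whenever $q_v\ge 5$, and $\epsilon_v$ is trivial on $\mathbf{s}_v(\mathrm{SL}_2(\mo_v))$; since only finitely many finite places have $q_v\le 4$, the formula $\lambda\bigl((g_v)_v\bigr)=\prod_v\lambda_v(g_v)$ is a finite product and defines a character of $\prod'_{v<\infty}\wt{\mo_v}$. This character is trivial on the kernel of the surjection $\prod'_{v<\infty}\wt{\mo_v}\to\tilde{K}_f$, which consists of the tuples $\bigl([1_2,(-1)^{a_v}]\bigr)_v$ with $\sum_v a_v$ even, since $\lambda$ maps such a tuple to $(-1)^{\sum_v a_v}=1$; hence $\lambda$ descends to $\tilde{K}_f$, and it is genuine because $\lambda([1_2,-1])=-1$.

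Finally I would match the local condition with the splitting behaviour of $2$. If $2$ does not split completely in $F/\mq$, there is a finite place $v\mid 2$ with $F_v\ne\mq_2$; then $q_v$ is even and $F_v\ne\mq_2$, so Lemma~\ref{noexi} shows $\wt{\mo_v}$ has no genuine character, and therefore no multiplier system exists. Conversely, if $2$ splits completely, then for $v\nmid 2$ the order $q_v$ is odd and Lemma~\ref{eps} supplies the genuine character $\epsilon_v$, while for $v\mid 2$ we have $F_v=\mq_2$ and Lemma~\ref{gen1} (or Lemma~\ref{gen2}) supplies a genuine character of $\wt{\mz_2}$; thus $\wt{\mo_v}$ has a genuine character for every $v<\infty$. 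This proves both statements of the corollary, the second being exactly this existence under the splitting hypothesis.

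The only step needing care is the ``if'' half of the reduction to local groups: one has to use the precise description of $\tilde{K}_f$ as the quotient of the restricted direct product $\prod'_{v<\infty}\wt{\mo_v}$ by the central subgroup of ``even-sign'' tuples, so that the product $\prod_v\lambda_v$ is simultaneously a finite product, well defined on $\tilde{K}_f$, and genuine. The remaining steps are direct appeals to Proposition~\ref{lift}, Lemma~\ref{vlambda}, and Lemmas~\ref{noexi}, \ref{eps}, and \ref{gen1}.
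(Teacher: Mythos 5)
Your proposal is correct and follows exactly the route the paper intends: the corollary is stated without proof immediately after Proposition \ref{lift}, and the intended derivation is precisely your combination of Proposition \ref{lift} and Lemma \ref{vlambda} (reducing existence of a multiplier system to existence of a genuine character of $\tilde{K}_f$), the reduction to the local groups $\wt{\mo_v}$ via the restricted-product description of $\wt{\ma_f}$, and Lemmas \ref{noexi}, \ref{eps} and \ref{gen1}. Your explicit check that $\prod_v\lambda_v$ is a finite product and descends to the quotient $\tilde{K}_f$ is a detail the paper leaves implicit, but it matches the construction of $\wt{\ma}$ given in Section \ref{system}.
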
 

\begin{prop} \label{kappa}
Suppose that 2 splits completely in $F/\mq$. 
Let $\mv_{\lambda}$ be a multiplier system of half-integral weight of $\mathrm{SL}_2(\mo)$, 
where $\lambda=\prod_{v<\infty} \lambda_v$ is a genuine character of $\tilde{K}_f$. 
Put $S_2=\{v < \infty \mid F=\mq_2 \}$ and $T_3=\{v<\infty \mid q_v=3\}$. 
If $q_v$ is odd, let $\epsilon_v$ be a genuine character of $\mathrm{SL}_2(\mo_v)$ 
in Lemma \ref{eps}. 
We set $S_3 =\{v \in T_3 \mid \lambda_v \ne \epsilon_v\}$. 
Let $\beta_v$ be a element of $F^{\times}$ such that $\lambda_v=\mu_{\beta_v}$ 
for $v \in S_2 \cup S_3$. 
Then we have 
\[
\mv_{\lambda}(\gamma)=\mv_0(\gamma) 
\prod_{v \in S_2 \cup S_3} \kappa_v(\beta_v, \iota_v(\gamma)) \hspace{20pt} 
\gamma= \begin{pmatrix} a & b \\ c & d \end{pmatrix} \in \mathrm{SL}_2(\mo). 
\]
Here, if $v \in S_2$, 
\[
\kappa_v(\beta_v, g)=\begin{cases}\psi_{\beta_v}(-(a+d)c+3c) & c \in \mz_2^{\times} \\ 
\psi_{\beta_v}((c-b)d-3(d-1)) & c \in 2\mz_2 \end{cases} 
\]
and if $v \in S_3$, 
\[
\kappa_v(\beta_v, g)=\psi_{\beta_v}(-(a+d)c+bd(c^2-1)) 
\]
for $g= \begin{pmatrix} a & b \\ c & d \end{pmatrix} \in \mathrm{SL}_2(\mo_v)$. 
Note that $\kappa_v(\beta_v, \iota_v(\gamma))$ is a continuous function on $\gamma$. 
\end{prop}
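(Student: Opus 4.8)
The plan is to unwind the definition $\mv_\lambda(\gamma)=\lambda(\tilde{\iota}_f(\gamma))$ and evaluate it one place at a time, feeding in the explicit descriptions of the local genuine characters obtained in Section \ref{local}. Since $\tilde{\iota}_f(\gamma)=([\iota_v(\gamma)])_{v<\infty}$ lies in $\tilde K_f$ and, by hypothesis, $\lambda=\prod_{v<\infty}\lambda_v$, we have
\[
\mv_\lambda(\gamma)=\prod_{v<\infty}\lambda_v([\iota_v(\gamma)]),
\]
and the whole problem is to see that almost all factors equal $s_v(\iota_v(\gamma))=1$ and to read off the remaining ones.

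First I would sort out which local characters can occur. Since $2$ splits completely in $F/\mq$, every finite place $v$ has either $F_v=\mq_2$ (that is, $v\in S_2$) or $q_v$ odd; recall moreover, by Corollary \ref{cor2}, that this splitting hypothesis is exactly what makes all the $\wt{\mo_v}$ carry genuine characters. When $q_v$ is odd the metaplectic cover splits over $\mathrm{SL}_2(\mo_v)$, giving the character $\epsilon_v$, and Lemma \ref{eps} shows that $\epsilon_v$ is the only genuine character of $\wt{\mo_v}$ once $q_v\ge 5$; when $q_v=3$, Lemma \ref{gen1} lists exactly three genuine characters, namely $\epsilon_v$ and the nontrivial $\mu_{\beta_v}$. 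By the definition $S_3=\{v\in T_3\mid \lambda_v\ne\epsilon_v\}$ we therefore get $\lambda_v=\epsilon_v$ for every $v\notin S_2\cup S_3$, while for $v\in S_3$ (resp.\ $v\in S_2$) the character $\lambda_v$ is one of the nontrivial $\mu_{\beta_v}$ of Lemma \ref{gen1} (resp.\ Lemma \ref{gen2}); note that there is no splitting character $\epsilon_v$ when $F_v=\mq_2$, so at a place of $S_2$ every genuine character is of $\mu$-type. In particular $\beta_v$ is automatically chosen so that $\iota_v(\beta_v)$ lies in the range ($\mathrm{ord}\,\beta=-3$, resp.\ $\mathrm{ord}\,\psi_\beta=-1$) in which formula (\ref{kappa2}), resp.\ (\ref{kappa3}), is valid.

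Next I would substitute. Writing $g=\iota_v(\gamma)\in\mathrm{SL}_2(\mo_v)$: for $v\notin S_2\cup S_3$ one has $\lambda_v([g])=\epsilon_v([g,1])=s_v(g)$, while for $v\in S_2\cup S_3$ the identity $\mu_{\beta}([g])=s(g)\,\kappa(\beta,g)$ recorded in Section \ref{local} gives $\lambda_v([g])=s_v(g)\,\kappa_v(\beta_v,g)$ with $\kappa_v$ as in (\ref{kappa2}) or (\ref{kappa3}). As $s_v(g)=1$ for all but finitely many $v$, the product $\prod_{v<\infty}\lambda_v([\iota_v(\gamma)])$ is finite; collecting the $\kappa_v$-factors and using $\prod_{v<\infty}s_v(\iota_v(\gamma))=\mv_0(\gamma)$, we obtain
\[
\mv_\lambda(\gamma)=\mv_0(\gamma)\prod_{v\in S_2\cup S_3}\kappa_v(\beta_v,\iota_v(\gamma)),
\]
which is the asserted formula. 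Continuity of $\gamma\mapsto\kappa_v(\beta_v,\iota_v(\gamma))$ is clear from (\ref{kappa2}), (\ref{kappa3}), which express it as $\psi_{\beta_v}$ applied to a polynomial in the entries of $\iota_v(\gamma)$.

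The only part that calls for care is the bookkeeping underlying the first display: one must check that a genuine character $\lambda$ of $\tilde K_f$ really does restrict to genuine characters $\lambda_v$ of the individual $\wt{\mo_v}$ with $\lambda_v=\epsilon_v$ for almost every $v$, so that the infinite product collapses to the finite product written above. This rests on the finiteness of $S_2$ and $T_3$ together with Lemma \ref{eps}. Everything past that point — the trichotomy $F_v=\mq_2$, $q_v=3$, $q_v\ge 5$, and the insertion of (\ref{kappa2}), (\ref{kappa3}) — is a mechanical consequence of Lemmas \ref{eps}, \ref{gen1} and \ref{gen2}.
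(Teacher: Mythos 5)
Your proposal is correct and follows the same route as the paper: factor $\mv_\lambda(\gamma)=\prod_{v<\infty}\lambda_v([\iota_v(\gamma)])$, use $\epsilon_v([g])=s_v(g)$ at places outside $S_2\cup S_3$ and $\mu_{\beta_v}([g])=s_v(g)\kappa_v(\beta_v,g)$ from (\ref{kappa2}) and (\ref{kappa3}) at the remaining places, then collect the $s_v$ factors into $\mv_0$. The extra bookkeeping you supply (the classification of local genuine characters via Lemmas \ref{eps}, \ref{gen1}, \ref{gen2} and the finiteness of the product) is left implicit in the paper but is exactly the right justification.
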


\begin{proof}
We have $\mv_{\lambda}(\gamma)=\lambda(\tilde{\iota}_f(\gamma))
=\prod_{v<\infty}\lambda_v([\iota_v(\gamma)])$. 
If $v \notin S_2 \cup S_3$, then we have $\epsilon_v([g])=s_v(g)$ 
for any $g \in \mathrm{SL}_2(\mo_v)$.
If $v \in S_2$ (resp. $S_3$), we have 
$\mu_{\beta_v}([g])=s_v(g) \kappa_v(\beta_v, g)$ by (\ref{kappa2}) (resp. (\ref{kappa3})) 
for any $g \in \mathrm{SL}_2(\mo_v)$. 
This proves the proposition. 
\end{proof}

\section{The condition of the existence of a theta function} \label{main} 
Suppose that 2 splits completely in $F/\mq$. 
By Lemma \ref{noexi}, 
there exists a genuine character $\lambda_v:\wt{\mo_v} \to \mac^{\times}$ for any $v<\infty$. 
If $v<\infty$, put $K_v=$ $\mathrm{SL}_2(\mo_v)$. 
If $v \mid \infty$, put $K_v=\mathrm{SO(2)}$. 
Then $K_v$ is a maximal compact subgroup of $\mathrm{SL}_2(F_v)$ for any $v$. 
Let $\beta$ be an element of $F^{\times}$ 
and $\psi_{\beta}$ the character of $\ma/F$ as in Section \ref{intro}. 
For any $v$, we denote the Weil representation of $\wt{F_v}$ by $\omega_{\psi_{\beta}, v}$. 

Let $\{\infty_1, \cdots, \infty_n\}$ be the set of $v \mid \infty$ 
and $S(\mr)$ the Schwartz space of $\mr$.
We have an irreducible decomposition 
\[
\omega_{\psi_{\beta}, v}=\omega_{\psi_{\beta}, v}^+ \oplus \omega_{\psi_{\beta}, v}^-,
\]
where $\omega_{\psi_{\beta}, v}^+$ (resp. $\omega_{\psi_{\beta}, v}^-$) is 
an irreducible representation of the set of even (resp. odd) functions in $S(\mr)$ 
 (see \cite[Lemma 2.4.4]{LiVe1}). 

The group $\wt{\mr}$ has a maximal compact subgroup $\widetilde{\mathrm{SO}(2)}$, 
which is the inverse image of $\mathrm{SO}(2)$ in $\wt{\mr}$. 
It is known that if $\lambda_v: \widetilde{\mathrm{SO(2)}} \to \mac^{\times}$ is 
a genuine character, 
dim$_{\mac}(\omega_{\psi_{\beta}, v}, S(\mr))^{\lambda_v}$ is at most 1. 
Let  $\lambda_{\infty, 1/2}$  be a genuine character of lowest weight 1/2 
with respect to $(\omega_{\psi_{\beta}, v}^+, S(\mr))$ and $\lambda_{\infty, 3/2}$ 
of lowest weight 3/2 with respect to $(\omega_{\psi_{\beta}, v}^-, S(\mr))$. 
For $\beta>0$, 
$(\omega_{\psi_{\beta}, v}^+, S(\mr))^{\lambda_{\infty, 1/2}}= \mac\, e(i\iota_v(\beta) x^2)$ 
and 
$(\omega_{\psi_{\beta}, v}^-, S(\mr))^{\lambda_{\infty, 3/2}}=\mac\, x e(i\iota_v(\beta)x^2)$ 
are spaces of lowest weight vectors. 
If $\beta<0$, there exist no lowest weight vectors with respect to 
$(\omega_{\psi_{\beta}, v}^+, S(\mr))$ or $(\omega_{\psi_{\beta}, v}^-, S(\mr))$. 

Note that $\lambda_v(\mathbf{s}_v($$\mathrm{SL}_2(\mo_v)))=1$ 
for any $v<\infty$ except for finitely many places. 
Then a genuine character $\lambda_f:\tilde{K_f} \to \mac^{\times}$ is given by 
$\lambda_f(g)=\prod_{v<\infty} \lambda_v(g_v)$ for $g=(g_v)_v \in \tilde{K_f}$. 
Put $w=(w_1, \cdots, w_n) \in \{1/2, 3/2 \}^n$. 
We define an automorphy factor $j^{\lambda_f, w}(\gamma, z)$ 
for $\gamma \in$ $\mathrm{SL}_2(\mo)$ and $z=(z_1, \cdots, z_n) \in \mh^n$ by 
$$j^{\lambda_f, w}(\gamma, z)=\prod_{v<\infty} \lambda_v([\iota_v(\gamma)]) 
\prod_{i=1}^n \tilde{j}([\iota_i(\gamma)], z_i)^{2w_i}.$$ 
In particular, we have 
$j^{\lambda_f, w}(-1_2, z)=\prod_{v<\infty} \lambda_v([-1_2]) \times (-1)^{\sum 2w_i}$. 
If it does not equal 1, the space of Hilbert modular forms of weight $w$ for $\mathrm{SL}_2(\mo)$ 
is $\{0\}$. 

Put $K=K_f \times \prod_{v \mid \infty} \mathrm{SO}(2)$. 
There exists a genuine character $\lambda:\tilde{K} \to \mac^{\times}$ 
such that its $v$-component equals $\lambda_v$, 
where $\lambda_{\infty_i}$ is $\lambda_{\infty, 1/2}$ or $\lambda_{\infty, 3/2}$ 
for $1 \le i \le n$. 
Then we have an automorphy factor $j^{\lambda_f, w}(\gamma, z)$ 
corresponding to $\lambda$ such that $\lambda_{\infty_i}=\lambda_{\infty, w_i}$.

Let $M_w($$\mathrm{SL}_2(\mo), \lambda_f)$ be the space of Hilbert modular forms on $\mh^n$ 
with respect to $j^{\lambda_f, w}(\gamma, z)$. 
A holomorphic function $h(z)$ of $\mh^n$ belongs to the space 
$M_w($$\mathrm{SL}_2(\mo), \lambda_f)$ if and only if 
\[
h(\gamma(z))=j^{\lambda_f, w}(\gamma, z)h(z), 
\]
where $\gamma(z)=(\iota_1(\gamma)(z_1), \cdots, \iota_n(\gamma)(z_n))$
for $\gamma \in$ $\mathrm{SL}_2(\mo)$ and $z \in \mh^n$. 
(When $F=\mathbb{Q}$, the usual cusp condition is also required.)

For each $g \in \wt{\ma}$, there exist $\gamma \in$ $\mathrm{SL}_2(F)$, 
$g_{\infty} \in \widetilde{\mathrm{SL}_2(\mr)^n}$ 
and $g_f \in \tilde{K_f}$ such that $g=\gamma g_{\infty} g_f$ 
by the strong approximation theorem for $\mathrm{SL}_2(\ma)$. 
Put $\mathbf{i}=(\sqrt{-1}, \cdots, \sqrt{-1}) \in \mh^n$. 
For $h \in M_w(\mathrm{SL}_2(\mo), \lambda_f)$, put 
$$\varphi_h(g)=h(g_{\infty}(\mathbf{i})) \lambda_f(g_f)^{-1} 
\prod_{i=1}^n \tilde{j}(g_{\infty_i}, \sqrt{-1})^{-2w_i}. $$ 
Then $\varphi_h$ is an automorphic form on $\mathrm{SL}_2(F)\backslash \wt{\ma}$. 

Let $\mathcal{A}_w(\mathrm{SL}_2(F)\backslash \widetilde{\mathrm{SL}_2(\ma)}, \lambda_f)$ be the space of automorphic forms $\varphi$ on $\mathrm{SL}_2(F)\backslash \widetilde{\mathrm{SL}_2(\ma)}$ satisfying the following conditions (1), (2), and (3).
\begin{itemize}
\item[(1)]
$\varphi(gk_\infty)=\varphi(g)\prod_{i=1}^n \tilde j(k_{\infty, i}, \sqrt{-1})^{-2w_i}$ for any $g\in \widetilde{\mathrm{SL}_2(\ma)}$ and $k_\infty=(k_{\infty, 1}, \ldots, k_{\infty, n})\in \widetilde{\mathrm{SO}(2)^n}$.
\item[(2)] 
$\varphi$ is a lowest weight vector with respect to the right translation of $\widetilde{\mathrm{SL}_2(\mr)^n}$.
\item[(3)]
$\varphi(gk)=\lambda_f(k)^{-1}\varphi(g)$ for any $g\in \widetilde{\mathrm{SL}_2(\ma)}$ and  $k\in \tilde K_f$.
\end{itemize}

Then $\Phi:h\mapsto \varphi_h$ gives rise to an isomorphism 
\[
M_w(\mathrm{SL}_2(\mo), \lambda_f)\longrightarrow\hskip -15pt \raise 4pt\hbox{$\sim$}
\hskip 8pt \mathcal{A}_w(\mathrm{SL}_2(F)\backslash \widetilde{\mathrm{SL}_2(\ma)}, \lambda_f). 
\]
For $\varphi\in \mathcal{A}_w(\mathrm{SL}_2(F)\backslash \widetilde{\mathrm{SL}_2(\ma)}, \lambda_f)$, put $h=\Phi^{-1}(\varphi)$.
Then we have
\[
h(z)=\varphi(g_\infty)\prod_{i=1}^n \tilde{j}(g_{\infty_i}, \sqrt{-1})^{2w_i},
\qquad g_\infty\in \widetilde{\mathrm{SL}_2(\mathbb{R})^n}, \ 
g_\infty(\mathbf{i})=z.
\]

Now suppose that $v<\infty$. 
Let $\md$ be the different of $F/\mq$ 
and $q_v$ the order of the residue field $\mo_v/\map_v$. 
When $q_v$ is odd, let $\epsilon_v$ be a genuine character $\epsilon_F$ in Lemma \ref{eps}. 
Put $S_2=\{v \mid F_v=\mq_2\}$, $T_3=\{v<\infty \mid q_v=3\}$ and 
$S_3=\{v \in T_3 \mid \lambda_v \ne \epsilon_v \}$. 
Since 2 splits completely in  $F/\mq$, we have $|S_2|=n$. 
By Lemma \ref{mq5} and Lemma \ref{mq3}, 
$(\omega_{\psi_{\beta}, v}, S(F_v))^{\lambda_v}$ is not 0 if and only if we have 
$$\mathrm{ord}_v \psi_{\beta, v} \equiv \begin{cases} 0 \text{ mod~2} & 
\text{if } \lambda_v=\epsilon_v \\ 1 \text{ mod~2} & \text{otherwise.} 
\end{cases} $$ 

Then, if $(\omega_{\psi_{\beta}, v}, S(F_v))^{\lambda_v} \ne 0$ for any $v<\infty$, 
there exists a fractional ideal $\maa$ such that 
\begin{equation} \label{class} (8\beta) \md \prod_{v \in S_3} \map_v=\maa^2. \end{equation} 
The set of totally positive elements of $F$ is denoted by $F_+^{\times}$. 
Replacing $\beta$ with $\beta \gamma^2$ and $\maa$ with $(\maa \gamma)^2$ 
in (\ref{class}) for $\gamma \in F_+^{\times}$, 
we may assume $\mathrm{ord} _v \maa=0$ for $v \in S_2 \cup S_3$. 
Then we have $\mathrm{ord} _v \psi_{\beta, v}=-1$ (resp. $-3$) for $v \in S_3$ (resp. $S_2$). 

Conversely, suppose that there exists a fractional ideal $\maa$ satisfying (\ref{class}) 
for a subset $S_3 \subset T_3$. 
For $v<\infty$, put
\[
\lambda_v=\begin{cases} 
\epsilon_v & \text{ if } \mathrm{ord}_v \psi_{\beta, v} \equiv 0 \ \ \text{mod~2} \\
\mu_{\beta} & \text{ if } \mathrm{ord}_v \psi_{\beta, v} \equiv 1 \ \ \text{mod~2}, 
\end{cases}
\]
where $\mu_{\beta}$ is a genuine character in Lemma \ref{gen1} or Lemma \ref{gen2}. 
By Lemma \ref{mq5} and Lemma \ref{mq3}, 
we have $(\omega_{\psi_{\beta}, v}, S(F_v))^{\lambda_v} \ne 0$ for any $v<\infty$. 
Let $\lambda:\tilde{K} \to \mac^{\times}$ be a genuine character 
such that its $v$-component equals $\lambda_v$, where 
$\lambda_{\infty_i}=\lambda_{\infty, w_i}$ for $w_i \in \{1/2, 3/2\}$. 
Put $S_{\infty}=\{\infty_i \mid w_i=3/2 \}$. 

From now on, suppose that $\beta \in F_+^{\times}$. 
Let $S(\ma)$ be the Schwartz space of $\ma$ 
and $(\omega_{\psi_{\beta}}, S(\ma))^{\lambda}$ the set of functions 
$\phi=\prod_v \phi_v \in S(\ma)$ such that 
$\phi_v \in (\omega_{\psi_{\beta}, v}, S(F_v))^{\lambda_v}$ for any $v$. 
For $\phi \in S(\ma)$, we define the theta function $\Theta_{\phi}$ by 
\begin{equation} \label{tfun} 
\Theta_{\phi}(g)=\sum_{\xi \in F} \omega_{\psi_{\beta}}(g)\phi(\xi) \hspace{15pt} 
g=(g_v) \in \wt{\ma}, \end{equation} 
where $\omega_{\psi_{\beta}}(g)\phi(\xi)
=\prod_v \omega_{\psi_{\beta}, v}(g_v)\phi_v(\iota_v(\xi))$ 
is essentially a finite product. 
We have $\Theta_{\phi}(gk)=\lambda(k)^{-1}\Theta_{\phi}(g)$ 
for any $g \in \wt{\ma}$ and $k \in \tilde{K}_f$. 
If $\phi \in (\omega_{\psi_{\beta}}, S(\ma))^{\lambda}$, 
then $\Theta_{\phi}$ is a Hilbert modular form of weight $w=(w_1, \cdots, w_n)$. 

It is known that 
$$\omega_{\psi_{\beta}}=\bigoplus_{S} \omega_{\psi_{\beta}, S}, \hspace{15pt} 
\omega_{\psi_{\beta}, S}=\left( \bigotimes_{v \in S} \omega_{\psi_{\beta}, v}^- \right) 
\otimes \left( \bigotimes_{v \notin S} \omega_{\psi_{\beta}, v}^+ \right), $$ 
where $S$ ranges over all finite subsets of places of $F$ (see \cite[\S 3.4]{Ga1}). 
We define a map $\Theta$ from $\omega_{\psi_{\beta}}$ to 
the space of automorphic forms on $\widetilde{\mathrm{SL}_2(\mathbb{A})}$
by $\Theta(\phi)(g)=\Theta_{\phi}(g)$. 
Then it is known that 
\begin{equation} \label{gan} 
\mathrm{Im}(\Theta) \simeq \bigoplus_{|S|:\text{even}} \omega_{\psi_{\beta}, S}, 
\end{equation} 
(see \cite[Proposition 3.1]{Ga1}).

Let $\mathbf{G}$ be the set of triplets $(\beta, S_3, \maa)$ of 
$\beta \in F_+^{\times}$, a subset $S_3 \subset T_3$ and a fractional ideal $\maa$ of $F$ 
satisfying (\ref{class}) and the condition (A), 
\begin{equation}
|S_2|+|S_3|+|S_{\infty}| \in 2\mz. \tag{\text{A}}
\end{equation}
We define an equivalence relation $\sim$ on $\mathbf{G}$ by 
\[
(\beta, S_3, \maa) \sim (\beta', S'_3, \maa') \Longleftrightarrow 
S_3=S'_3, \ \beta'=\gamma^2 \beta, \ \maa'=\gamma \maa 
\text{ for some } \gamma \in F^{\times}. 
\]

\begin{thm} \label{main1} 
Suppose that 2 splits completely in $F/\mq$. 
Let $\beta \in F^\times_+$, $\lambda:\tilde{K} \to \mac^{\times}$ 
and $w_1, \ldots, w_n\in\{1/2, 3/2\}$ be as above.
Then there exists $\phi=\prod_v \phi_v \in (\omega_{\psi_{\beta}}, S(\ma))^{\lambda}$ 
such that $\Theta_\phi\neq 0$ if and only if 
there exists a fractional ideal $\maa$ of $F$ such that $(\beta, S_3, \maa) \in \mathbf{G}$. 
\end{thm}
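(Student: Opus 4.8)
The plan is to move everything into the adelic Weil representation and to exploit the decomposition $\omega_{\psi_\beta}=\bigoplus_S\omega_{\psi_\beta,S}$ together with the isomorphism $(\ref{gan})$. I would reduce the theorem to two facts: (i) the space $(\omega_{\psi_\beta},S(\ma))^\lambda$ is at most one dimensional, and it is nonzero if and only if $(\ref{class})$ admits a solution $\maa$; and (ii) when $(\omega_{\psi_\beta},S(\ma))^\lambda\neq 0$, its (essentially unique) generator $\phi=\prod_v\phi_v$ lies in the single summand $\omega_{\psi_\beta,S_0}$ with $S_0=S_2\cup S_3\cup S_\infty$, whence $\Theta_\phi\neq 0$ if and only if $|S_0|$ is even, i.e.\ condition $(\mathrm A)$ holds. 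Granting (i) and (ii), the theorem follows at once, since by definition $(\beta,S_3,\maa)\in\mathbf{G}$ for some $\maa$ is exactly the conjunction of the solvability of $(\ref{class})$ and condition $(\mathrm A)$; both directions of the ``if and only if'' are then covered simultaneously.

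For (i) I would argue place by place, using that a pure tensor $\prod_v\phi_v$ is nonzero precisely when every $\phi_v$ is nonzero and that $(\omega_{\psi_\beta},S(\ma))^\lambda=\bigotimes_v(\omega_{\psi_\beta,v},S(F_v))^{\lambda_v}$. At $v\mid\infty$ the hypothesis $\beta\in F_+^\times$ gives $\iota_v(\beta)>0$, so $(\omega_{\psi_\beta,v},S(\mr))^{\lambda_v}$ is the one dimensional space of lowest weight vectors, namely $\mac\,e(i\iota_v(\beta)x^2)$ if $w_i=1/2$ and $\mac\,x\,e(i\iota_v(\beta)x^2)$ if $w_i=3/2$. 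At $v<\infty$, Lemma $\ref{mq5}$ and Lemma $\ref{mq3}$ (recalling that $\epsilon_v$ is the only genuine character of $\wt{\mo_v}$ when $q_v\geq 5$) show that $(\omega_{\psi_\beta,v},S(F_v))^{\lambda_v}$ is one dimensional precisely when either $\lambda_v=\epsilon_v$ and $\mathrm{ord}_v\psi_{\beta,v}$ is even, or $v\in S_2\cup S_3$ and $\lambda_v$ is the genuine character $\mu_\beta$ attached to $\beta$ (forcing $\mathrm{ord}_v\psi_{\beta,v}$ odd), and is zero otherwise. Since $\mathrm{ord}_v\psi_{\beta,v}=\mathrm{ord}_v\md+\mathrm{ord}_v\beta$, and $\mathrm{ord}_v(8)$ equals $3$ for $v\in S_2$ and $0$ otherwise, the condition ``$(\omega_{\psi_\beta,v},S(F_v))^{\lambda_v}\neq 0$ for all $v<\infty$'' is equivalent to ``$\mathrm{ord}_v\big((8\beta)\md\prod_{v'\in S_3}\map_{v'}\big)$ is even for every $v$'', i.e.\ to the solvability of $(\ref{class})$. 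Carrying out this bookkeeping — and in particular matching the square-class conditions coming from Lemma $\ref{mq3}$ at the places of $S_2\cup S_3$ with the divisibility conditions in $(\ref{class})$, so that $\lambda$ really is the genuine character attached to $\beta$ at those places — is the step I expect to be the most delicate.

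For (ii) I would determine the parity of each local generator. At $v\mid\infty$, $e(i\iota_v(\beta)x^2)$ is even and $x\,e(i\iota_v(\beta)x^2)$ is odd, so $\phi_v$ is odd exactly when $\infty_i\in S_\infty$. At $v<\infty$ with $\lambda_v=\epsilon_v$, Lemma $\ref{mq5}$ identifies $\phi_v$ with the characteristic function of an additive subgroup of $F_v$, which is even; while at $v\in S_2\cup S_3$, Lemma $\ref{mq3}$ together with $(\ref{dim2})$ gives $(\omega_{\psi_\beta,v},S(F_v))^{\lambda_v}\subset\omega_{\psi_\beta,v}^-$, so $\phi_v$ is odd (one may take $\phi_v=f$ as in $(\ref{eigen})$). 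Hence the set of places at which $\phi_v$ is odd is exactly $S_0=S_2\cup S_3\cup S_\infty$, a finite set, so $\mac\,\phi$ is contained in the single summand $\omega_{\psi_\beta,S_0}$. Because $\Theta$ is $\wt{\ma}$-equivariant and each $\omega_{\psi_\beta,S}$ is irreducible, the restriction of $\Theta$ to $\omega_{\psi_\beta,S_0}$ is either injective or zero; by $(\ref{gan})$ it is injective exactly when $|S_0|$ is even. Therefore $\Theta_\phi\neq 0$ if and only if $|S_2|+|S_3|+|S_\infty|\in 2\mz$, which is condition $(\mathrm A)$, and combining this with (i) completes the proof.
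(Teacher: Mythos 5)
Your proposal is correct and follows essentially the same route as the paper: the nonvanishing of each local space $(\omega_{\psi_{\beta},v},S(F_v))^{\lambda_v}$ via Lemmas \ref{mq5} and \ref{mq3} (and the lowest weight vectors at $v\mid\infty$) is converted into the solvability of (\ref{class}), and then the decomposition (\ref{gan}) shows $\Theta_\phi\neq 0$ exactly when the number of places where $\phi_v$ is odd, namely $|S_2|+|S_3|+|S_\infty|$, is even. The square-class subtlety at $v\in S_2\cup S_3$ that you flag as delicate is handled in the paper by the standing assumption that $\lambda_v$ is the character $\mu_\beta$ attached to $\beta$ itself at those places.
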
 

\begin{proof} 
Let $\lambda_v:\widetilde{\mathrm{SL}_2(\mathfrak{o}_v)}\rightarrow \mathbb{C}^\times$ 
be the $v$-component of $\lambda$ for any $v<\infty$. 
We already proved that there exists $\prod_{v<\infty} \phi_v \ne 0$ such that 
$\phi_v \in (\omega_{\psi_{\beta}, v}, S(F_v))^{\lambda_v}$ for any $v<\infty$ if and only if 
there exists a fractional ideal $\maa$ of $F$ satisfying (\ref{class}).
Suppose that the equivalent conditions hold. 
Since we have 
$(\omega_{\psi_{\beta}, v}^+, S(\mr))^{\lambda_{\infty, 1/2}}= \mac\, e(i\iota_v(\beta) x^2)$ 
and 
$(\omega_{\psi_{\beta}, v}^-, S(\mr))^{\lambda_{\infty, 3/2}}=\mac\, x e(i\iota_v(\beta) x^2)$
for any $v \mid \infty$, 
there exists a nonzero $\phi=\prod_v \phi_v \in (\omega_{\psi_{\beta}}, S(\ma))^{\lambda}$. 
It is clear that if there exists a nonzero 
$\phi=\prod_v \phi_v \in (\omega_{\psi_{\beta}}, S(\ma))^{\lambda}$, 
$\prod_{v<\infty} \phi_v \ne 0$ satisfies 
$\phi_v \in (\omega_{\psi_{\beta}, v}, S(F_v))^{\lambda_v}$ for any $v<\infty$. 

Suppose there exists a nonzero 
$\phi=\prod_v \phi_v \in (\omega_{\psi_{\beta}}, S(\ma))^{\lambda}$. 
Note that
$|S_2|+|S_3|+|S_{\infty}|$ is the number of $v$ such that $\phi_v$ is an odd function. 
Then $|S|$ in (\ref{gan}) is $|S_2|+|S_3|+|S_{\infty}|$. 
By (\ref{gan}), it is clear that $\Theta_{\phi} \ne 0$ if and only if the condition (A) holds. \end{proof} 

Let $H$ be a group of fractional ideals that consists of all elements of the form
\[
\prod_{\begin{smallmatrix} {v\in T_3}\end{smallmatrix}} \map_v^{e_v}, \hspace{15pt} 
\sum_v e_v \in 2\mz.
\]
Let $\mathrm{Cl}^+$ be the narrow ideal class group of $F$.
Put $\mathrm{Cl}^{+2}=\{\mathfrak{c}^2 \mid \mathfrak{c} \in \mathrm{Cl}^+ \}$. 
We denote the image of the group $H$ (resp. $\mathfrak{b} \in \mathrm{Cl}^+$) 
in $\mathrm{Cl}^+/\mathrm{Cl}^{+2}$ by $\bar{H}$ (resp. $[\mathfrak{b}]$). 

\begin{thm} \label{main2}
Suppose that 2 splits completely in $F/\mq$. 
Let $w_1, \ldots, w_n\in\{1/2, 3/2\}$ be as above.
\begin{itemize}
\item[(1)] 
Suppose that $|S_2|+|S_{\infty}|$ is even.
Then there exists $(\beta, S_3, \maa) \in \mathbf{G}$ if and only if $[\md]\in\bar{H}$. 
\item[(2)] 
Suppose that $|S_2|+|S_{\infty}|$ is odd.
Then there exists $(\beta, S_3, \maa) \in \mathbf{G}$ if and only if $T_3\neq \emptyset$ 
and $[\md \map_{v_0}]\in \bar{H}$. Here, $v_0$ is any fixed element of $T_3$.
\end{itemize} 
\end{thm}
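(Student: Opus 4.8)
The plan is to rewrite the two defining conditions of $\mathbf{G}$ as a single assertion about the narrow ideal class group $\mathrm{Cl}^{+}$, and then deduce (1) and (2) by a parity count over subsets of $T_3$.

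First I would unwind condition (\ref{class}). Because $2$ splits completely in $F/\mq$, we have $|S_2|=n$ and $2\mo=\prod_{v\in S_2}\map_v$, so $(8)=\prod_{v\in S_2}\map_v^{3}$; in particular $8$, and hence any $(8\beta)$ with $\beta\in F_+^{\times}$, is a principal ideal with a totally positive generator, so it is trivial in $\mathrm{Cl}^{+}$. For $S_3\subset T_3$ put $\mathfrak{I}_{S_3}=\md\prod_{v\in S_3}\map_v$, so that (\ref{class}) reads $(8\beta)\,\mathfrak{I}_{S_3}=\maa^{2}$. I claim that, for a fixed $S_3$, there exist $\beta\in F_+^{\times}$ and a fractional ideal $\maa$ satisfying (\ref{class}) if and only if $[\mathfrak{I}_{S_3}]\in\mathrm{Cl}^{+2}$, equivalently $[\md\prod_{v\in S_3}\map_v]=1$ in $\mathrm{Cl}^{+}/\mathrm{Cl}^{+2}$. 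Indeed, taking classes in $\mathrm{Cl}^{+}$ in $(8\beta)\mathfrak{I}_{S_3}=\maa^{2}$ and using triviality of $(8\beta)$ gives $[\mathfrak{I}_{S_3}]=[\maa]^{2}\in\mathrm{Cl}^{+2}$; conversely, if $[\mathfrak{c}^{2}]=[\mathfrak{I}_{S_3}]$ for some fractional ideal $\mathfrak{c}$, then $\mathfrak{c}^{2}\mathfrak{I}_{S_3}^{-1}$ is trivial in $\mathrm{Cl}^{+}$, hence equal to $(\delta)$ for some $\delta\in F_+^{\times}$ by the definition of the narrow class group, and then $(\delta/8,\,S_3,\,\mathfrak{c})$ satisfies (\ref{class}).

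Next I would feed in condition (A). Since $|S_2|=n$, condition (A) reads $|S_3|\equiv|S_2|+|S_{\infty}|\pmod 2$: in case (1) it forces $|S_3|$ even, and in case (2) it forces $|S_3|$ odd, which in particular requires $T_3\neq\emptyset$. Combined with the previous paragraph, $\mathbf{G}$ is nonempty exactly when there is a subset $S_3\subset T_3$ of the prescribed parity with $[\md]=[\prod_{v\in S_3}\map_v]$ in $\mathrm{Cl}^{+}/\mathrm{Cl}^{+2}$ --- here I use that this group is an $\mathbb{F}_2$-vector space, so $[\md]$ is its own inverse.

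Finally I would identify the relevant set of classes. Modulo $\mathrm{Cl}^{+2}$ one has $[\prod_{v\in T_3}\map_v^{e_v}]=[\prod_{v:\,e_v\ \mathrm{odd}}\map_v]$, and $\sum_v e_v$ is even precisely when the number of $v$ with $e_v$ odd is even; hence $\bar H=\{\,[\prod_{v\in S}\map_v]\mid S\subset T_3,\ |S|\ \mathrm{even}\,\}$, a subgroup of $\mathrm{Cl}^{+}/\mathrm{Cl}^{+2}$, while $\{\,[\prod_{v\in S}\map_v]\mid S\subset T_3,\ |S|\ \mathrm{odd}\,\}$ is the coset $[\map_{v_0}]\bar H$ for any $v_0\in T_3$ (replace $S$ by its symmetric difference with $\{v_0\}$). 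Thus a suitable even $S_3$ exists iff $[\md]\in\bar H$, which is (1); and a suitable odd $S_3$ exists iff $T_3\neq\emptyset$ and $[\md]\in[\map_{v_0}]\bar H$, i.e. $[\md\map_{v_0}]\in\bar H$, which is (2) --- and this last condition does not depend on the choice of $v_0$ since $[\map_{v_0}\map_{v_1}]\in\bar H$ for all $v_0,v_1\in T_3$. I expect the only delicate points to be the total-positivity bookkeeping of the second paragraph (it is essential that $(8\beta)$ is trivial in $\mathrm{Cl}^{+}$, not merely principal, which is exactly why the narrow class group enters and where ``$2$ splits completely'' is used a second time) and making the parity imposed by (A) match the parity built into the definition of $H$.
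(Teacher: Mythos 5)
Your proof is correct and follows essentially the same route as the paper: translate condition (\ref{class}) into the statement $[\md\prod_{v\in S_3}\map_v]\in\mathrm{Cl}^{+2}$ (using that $(8\beta)$ has a totally positive generator), observe that (A) fixes the parity of $|S_3|$, and identify $\bar H$ with the classes of even products of the $\map_v$, $v\in T_3$, and the odd products with the coset $[\map_{v_0}]\bar H$. The only quibble is a side remark, not a gap: the triviality of $(8\beta)$ in $\mathrm{Cl}^{+}$ needs only that $8\beta$ is totally positive, so the hypothesis that $2$ splits completely is not actually invoked there.
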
 

\begin{proof}
We prove the theorem in case (1). The proof for case (2) is similar. 

If $[\md] \in \bar{H}$, we have 
$(8\beta)\md \prod_{\begin{smallmatrix} {v \in T_3} \end{smallmatrix}} 
\map_v^{e_v}= \maa'^2$ 
such that $\sum_v e_v$ is even for a fractional ideal $\maa'$ and $\beta \in F_+^{\times}$. 
Put $S_3=\{v \in T_3 \mid e_v$ : odd$\}$. 
Since $|S_2|+|S_3|+|S_{\infty}|$ is even, we have $(\beta, S_3, \maa) \in \mathbf{G}$, where 
\[
\maa=\prod_{\begin{smallmatrix} {v \in T_3\backslash S_3} \end{smallmatrix}} 
\map_v^{-e_v/2} \maa'. 
\]

Conversely, if there exists $(\beta, S_3, \maa) \in \mathbf{G}$, 
it satisfies (\ref{class}) and $|S_3|$ is even. 
Then we have $[\md]=\prod_{v \in S_3} [\map_v] \in \bar{H}$. 
\end{proof}

Let $w_i$ be 1/2 or 3/2 for $1 \le i \le n$. 
Suppose that there exists  $(\beta, S_3, \maa) \in \mathbf{G}$. 
Replacing $(\beta, S_3, \maa)$ with an equivalent element of $\mathbf{G}$, 
we may assume $\mathrm{ord} _v \maa=0$ for $v \in S_2 \cup S_3$. 
Let $f_v$ be the function $f$ in (\ref{eigen}) and put 
\[
f=\prod_{v \in S_2 \cup S_3} f_v \times 
\prod_{v<\infty, v \notin S_2 \cup S_3} \mathrm{ch }\maa_v^{-1}, 
\]
where $\maa_v=\maa \mo_v$. 
Put $\phi=f \times \prod_{i=1}^n f_{\infty, i}$, 
where $f_{\infty, i}(x)=x^{w_i-(1/2)} e(i\iota_i(\beta) x^2)$ for $x \in \mr$. 
By Theorem \ref{main1}, there exists $\Theta_{\phi} \ne 0$ of weight $w=(w_1, \cdots, w_n)$. 

Put $z=(z_1, \cdots, z_n), \mathbf{i}=(\sqrt{-1}, \cdots, \sqrt{-1}) \in \mh^n$. 
We define $x_i, y_i \in \mr$ by $z_i=x_i+\sqrt{-1} y_i$ for $1 \le i \le n$. 
Then we have $z=g_{\infty}(\mathbf{i})$, where 
$g_{\infty}=(g_{\infty_1}, \cdots, g_{\infty_n}) \in$ $\mathrm{SL}_2(\mr)^n$, 
$g_{\infty_i}=\begin{pmatrix} y_i^{1/2} & y_i^{1/2}x_i \\ 0 & y_i^{-1/2} \end{pmatrix}$. 
Since $\lambda_v([1_2])=1$ for $v<\infty$, 
we have $$\Theta_{\phi}(g_{\infty})=\sum_{\xi \in \maa^{-1}} f(\iota_f(\xi)) 
\prod_{i=1}^n \omega_{\psi_{\beta}, \infty_i}([g_{\infty_i}]) f_{\infty, i}(\iota_i(\xi)). $$ 

\begin{thm} \label{hil} 
Let $\phi$ and $\Theta_{\phi}$ be as above. 
We define a theta function $\theta_{\phi}: \mh^n \to \mac$ by 
\[
\theta_{\phi}(z)=\sum_{\xi \in \maa^{-1}} f(\iota_f(\xi)) 
\prod_{\infty_i \in S_{\infty}} \iota_i(\xi) \prod_{i=1}^n e(z_i \iota_i(\beta \xi^2)). 
\] 
Then $\theta_{\phi}$ is a nonzero Hilbert modular form of weight $w$ 
for $\mathrm{SL}_2(\mo)$ with respect to a multiplier system. 

Every theta function of weight $w$ for $\mathrm{SL}_2(\mo)$ with a multiplier system may be obtained in this way. 
\end{thm}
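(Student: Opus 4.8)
The plan is to recognize $\theta_{\phi}$, up to a nonzero constant, as the classical Hilbert modular form attached to the adelic theta function $\Theta_{\phi}$ under the dictionary $\Phi^{-1}$ recalled before the statement, and then to read off the transformation law, the non-vanishing, and the surjectivity claim from results already in hand. First I would check that $\Theta_{\phi}$, as a function on $\mathrm{SL}_2(F)\backslash\wt{\ma}$, lies in $\mathcal{A}_w(\mathrm{SL}_2(F)\backslash\widetilde{\mathrm{SL}_2(\ma)}, \lambda_f)$: condition~(3) is exactly the identity $\Theta_{\phi}(gk)=\lambda(k)^{-1}\Theta_{\phi}(g)$ for $k\in\tilde K_f$ noted before Theorem~\ref{main1}, while conditions~(1) and~(2) hold because each infinite component $f_{\infty, i}(x)=x^{w_i-1/2}e(i\iota_i(\beta)x^2)$ is a lowest weight vector of weight $w_i$ for $(\omega_{\psi_{\beta}, \infty_i}^{\pm}, S(\mr))$ with $\widetilde{\mathrm{SO}(2)}$-character $\lambda_{\infty, w_i}$. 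Hence $h:=\Phi^{-1}(\Theta_{\phi})\in M_w(\mathrm{SL}_2(\mo), \lambda_f)$, so
\[
h(\gamma(z))=j^{\lambda_f, w}(\gamma, z)\,h(z)=\mv_{\lambda}(\gamma)\prod_{i=1}^n J(\iota_i(\gamma), z_i)^{2w_i}\,h(z)\qquad(\gamma\in\mathrm{SL}_2(\mo)),
\]
using $\tilde j([\iota_i(\gamma)], z_i)=J(\iota_i(\gamma), z_i)$ and $\mv_{\lambda}(\gamma)=\lambda(\tilde\iota_f(\gamma))=\prod_{v<\infty}\lambda_v([\iota_v(\gamma)])$; since $\mv_{\lambda}$ is a multiplier system of half-integral weight by Lemma~\ref{vlambda}, this already shows $h$ is a Hilbert modular form of weight $w$ for $\mathrm{SL}_2(\mo)$ with respect to $\mv_{\lambda}$.

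Next I would compute $h(z)$ explicitly and identify it with $\theta_{\phi}(z)$. Choose $g_{\infty}\in\widetilde{\mathrm{SL}_2(\mr)^n}$ with $g_{\infty}(\mathbf{i})=z$, writing the matrix under $g_{\infty_i}$ as the product $m(y_i^{1/2}, 1)\,u^+(x_iy_i^{-1})$ of upper triangular matrices, for which the Kubota cocycle is trivial. Applying the formulas~\eqref{gexam} and using $\alpha_{\psi_{\beta}}(-y_i^{1/2})=\alpha_{\psi_{\beta}}(-1)$ for $y_i>0$ gives
\[
\omega_{\psi_{\beta}, \infty_i}([g_{\infty_i}])f_{\infty, i}(x)=\alpha_{\psi_{\beta}}(1)\,\alpha_{\psi_{\beta}}(-1)\,y_i^{w_i/2}\,x^{w_i-1/2}\,e(\iota_i(\beta)z_i x^2).
\]
Since $g_{\infty_i}$ has lower-left entry $0$ and lower-right entry $y_i^{-1/2}>0$ we have $\tilde j([g_{\infty_i}], \sqrt{-1})^{2w_i}=y_i^{-w_i/2}$, and $\alpha_{\psi_{\beta}}(1)\alpha_{\psi_{\beta}}(-1)=|\alpha_{\psi_{\beta}}(1)|^2=1$. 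Summing over $\xi\in F$ in $\Theta_{\phi}(g_{\infty})\prod_i\tilde j([g_{\infty_i}], \sqrt{-1})^{2w_i}$ — only $\xi\in\maa^{-1}$ survive, because of the factor $f(\iota_f(\xi))$ — all the $z$-independent factors cancel, and, since $x^{w_i-1/2}=x$ if $\infty_i\in S_{\infty}$ and $=1$ otherwise, what remains is exactly $\theta_{\phi}(z)$. Thus $\theta_{\phi}=h$, so $\theta_{\phi}$ is a Hilbert modular form of weight $w$ for $\mathrm{SL}_2(\mo)$ with respect to the multiplier system $\mv_{\lambda}$; and since $(\beta, S_3, \maa)\in\mathbf{G}$, Theorem~\ref{main1} gives $\Theta_{\phi}\neq 0$, hence $\theta_{\phi}\neq 0$.

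For the surjectivity statement, let $\theta$ be any theta function of weight $w$ for $\mathrm{SL}_2(\mo)$ with a multiplier system; it is the classical avatar of $\Theta_{\phi'}$ for some $\phi'=\prod_v\phi'_v\in(\omega_{\psi_{\beta'}}, S(\ma))^{\lambda'}$, where by Proposition~\ref{lift} the multiplier system is $\mv_{\lambda'_f}$ for a genuine character $\lambda'_f$ of $\tilde K_f$ and the infinite components of $\lambda'$ are the $\lambda_{\infty, w_i}$. Existence of a lowest weight vector at every $v\mid\infty$ forces $\beta'\in F_+^{\times}$; then Proposition~\ref{dim}, Lemma~\ref{mq5} and Lemma~\ref{mq3} show that each $(\omega_{\psi_{\beta'}, v}, S(F_v))^{\lambda'_v}$ is one-dimensional, spanned by the characteristic function of $(\maa'\mo_v)^{-1}$ when $v<\infty$ and $\lambda'_v=\epsilon_v$, by a scalar multiple of the function in~\eqref{eigen} when $v<\infty$ and $\lambda'_v\neq\epsilon_v$, and by $x^{w_i-1/2}e(i\iota_i(\beta')x^2)$ when $v\mid\infty$. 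After rescaling $\phi'$ and replacing $(\beta', S_3, \maa')$ by an equivalent triplet with $\mathrm{ord}_v\maa'=0$ for $v\in S_2\cup S_3$, $\phi'$ is of the form of the $\phi$ fixed above, and $\Theta_{\phi'}\neq 0$ forces $(\beta', S_3, \maa')\in\mathbf{G}$ by Theorem~\ref{main1}; hence $\theta=\theta_{\phi}$ up to a constant. The step I expect to be the main obstacle is the explicit infinite-place computation in the second paragraph: one must keep careful track of the Weil constants $\alpha_{\psi_{\beta}}(\cdot)$ and of the chosen branch of $(cz+d)^{1/2}$ in $\tilde j$, so as to confirm that every $z$-independent factor produced by the $m(\cdot, 1)$- and $u^+(\cdot)$-actions cancels against $\prod_i\tilde j([g_{\infty_i}], \sqrt{-1})^{2w_i}$ and leaves $\theta_{\phi}$ on the nose; the rest reduces to the local uniqueness statements and Proposition~\ref{lift}.
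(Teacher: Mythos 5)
Your proposal is correct and follows essentially the same route as the paper: identify $\theta_{\phi}$ with $\Phi^{-1}(\Theta_{\phi})$ via the explicit computation of $\omega_{\psi_{\beta}, \infty_i}([g_{\infty_i}])f_{\infty, i}$ at the infinite places, obtain non-vanishing from Theorem \ref{main1}, and deduce the surjectivity claim from Proposition \ref{lift} together with the local one-dimensionality results. (One cosmetic slip: in the decomposition of $g_{\infty_i}$ the unipotent factor should be $u^+(x_i)$ rather than $u^+(x_iy_i^{-1})$, which does not affect the final formula you state.)
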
 

\begin{proof} 
Since 
\[
\omega_{\psi_{\beta}, \infty_i}([g_{\infty, i}]) f_{\infty, i}(\iota_i(\xi))= 
y_i^{w_i/2} \iota_i(\xi)^{w_i-(1/2)} e(z_i \iota_i(\beta \xi^2)), 
\] 
we have $\theta_{\phi}(z)=\Theta_{\phi}(g_{\infty}) \times \prod_{i=1}^n y_i^{-w_i/2}$. 
Then $\theta_{\phi}$ is nonzero. 
Note that 
\[
\tilde{j}([g_{\infty_i}], \sqrt{-1})^{2w_i}=y_i^{-w_i/2}. 
\]

Since $\phi \in (\omega_{\psi_{\beta}}, S(\ma))^{\lambda}$, we have 
$\Theta_\phi\in \mathcal{A}_w(\mathrm{SL}_2(F)\backslash \widetilde{\mathrm{SL}_2(\ma)}, \lambda_f)$. Then we have 
$\theta_\phi=\Phi^{-1}(\Theta_\phi)\in M_w($$\mathrm{SL}_2(\mo), \lambda_f)$. 
The multiplier system of $\theta_{\phi}$ is $\mv_{\lambda}$ given by 
\[
\mv_{\lambda}(\gamma)
=\mv_0(\gamma) \prod_{v \in S_2 \cup S_3} \kappa_v(\beta, \gamma) \hspace{20pt} 
\gamma \in \mathrm{SL}_2(\mo), 
\]
where $\kappa_v$ for  $v \in S_2 \cup S_3$ is the function in Proposition \ref{kappa}. 

By Proposition \ref{lift}, if $\theta$ is a theta function of weight $w$ 
for $\mathrm{SL}_2(\mo)$ with a multiplier system $\mv$, 
we have a genuine character $\lambda_f$ of $\tilde{K}_f$ such that $\mv=\mv_{\lambda_f}$. 
Let $\lambda=\lambda_f \times \prod_{i=1}^n \lambda_{\infty, w_i}$ 
be a genuine character of $\tilde{K}$. 
Then there exists nonzero $\phi \in (\omega_{\psi_{\beta}}, S(\ma))^{\lambda}$ such that 
$\theta=\theta_{\phi}$ up to constant, 
which completes the proof. 
\end{proof} 

\begin{prop} 
Let $\mathrm{Cl}$ be the usual ideal class group of $F$.
Let $Sq:\mathrm{Cl} \to \mathrm{Cl}^+$ be the homomorphism given by 
$[\maa] \mapsto [\maa^2]$ for a fractional ideal $\maa$ of $F$. 
The number of equivalence classes of $\mathbf{G}$ is equal to 
\[
[E^+:E^2] \sum_{\underset{(\text{A})}{S_3 \subset T_3}} 
|Sq^{-1}([\md \prod_{v \in S_3} \map_v])|, 
\]
where $S_3$ ranges over all subset of $T_3$ satisfying (A). 
Here, $E^+$ is the group of totally positive units of $F$ and 
$E^2$ is the subgroup of squares of units of $F$.   
\end{prop}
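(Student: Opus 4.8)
The plan is to sort the elements of $\mathbf{G}$ by the subset $S_3$, which is preserved by $\sim$, count the $\sim$-classes sharing a fixed $S_3$, and then sum over all $S_3\subset T_3$ satisfying (A). So fix such an $S_3$ and put $\mathfrak{m}=\md\prod_{v\in S_3}\map_v$. The first point I would record is that for $\beta\in F_+^{\times}$ the fractional ideal $(8\beta)$ is principal with a totally positive generator, hence trivial in $\mathrm{Cl}^+$; by (\ref{class}) this forces the class of $\maa^2$ in $\mathrm{Cl}^+$ to equal that of $\mathfrak{m}$, that is, $Sq([\maa])=[\mathfrak{m}]$, where $[\maa]\in\mathrm{Cl}$ is the ordinary ideal class. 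Since the action $(\beta,S_3,\maa)\mapsto(\gamma^2\beta,\gamma\maa)$ leaves $[\maa]\in\mathrm{Cl}$ unchanged (although it may move the narrow class), this produces a well-defined map $\Psi$ from the set of $\sim$-classes of $\mathbf{G}$ with the given $S_3$ to the fibre $Sq^{-1}([\mathfrak{m}])\subset\mathrm{Cl}$.

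Next I would verify that $\Psi$ is surjective. If $[\maa_0]\in Sq^{-1}([\mathfrak{m}])$, then $\maa_0^2\mathfrak{m}^{-1}$ is trivial in $\mathrm{Cl}^+$, so $\maa_0^2=(\delta)\mathfrak{m}$ for some totally positive $\delta\in F^{\times}$; then $(\delta/8,S_3,\maa_0)$ satisfies (\ref{class}) by construction and (A) by the choice of $S_3$, so it lies in $\mathbf{G}$, and $\Psi$ sends its class to $[\maa_0]$.

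The heart of the proof, and the step I expect to be the main obstacle, is computing the fibres of $\Psi$. Fix $[\maa_0]\in Sq^{-1}([\mathfrak{m}])$ and a base point $(\beta_0,S_3,\maa_0)$ above it. Given any $(\beta,S_3,\maa)\in\mathbf{G}$ with $[\maa]=[\maa_0]$, write $\maa=(\rho)\maa_0$ and replace it by the $\sim$-equivalent triplet $(\rho^{-2}\beta,S_3,\maa_0)$; this is legitimate since $\rho^{-2}$, being a square, is totally positive, so the new first entry stays in $F_+^{\times}$. Now both triplets carry the ideal $\maa_0$, so (\ref{class}) gives $(8\rho^{-2}\beta)=(8\beta_0)$, whence $\rho^{-2}\beta=u\beta_0$ for some $u\in\mo^{\times}$, and $u\in E^+$ because $\beta_0$ and $\rho^{-2}\beta$ are totally positive. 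Thus every class in the fibre is represented by some $(u\beta_0,S_3,\maa_0)$ with $u\in E^+$, and conversely each such triplet lies in $\mathbf{G}$ because $(u)=\mo$. Finally $(u\beta_0,S_3,\maa_0)\sim(u'\beta_0,S_3,\maa_0)$ holds iff there is $\gamma\in F^{\times}$ with $\gamma\maa_0=\maa_0$ and $\gamma^2u'\beta_0=u\beta_0$; the first equality forces $\gamma\in\mo^{\times}$, and the second then amounts to $u/u'\in E^2$. Hence the fibre is in bijection with $E^+/E^2$, which is finite, so it has $[E^+:E^2]$ elements.

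Putting the three steps together, the number of $\sim$-classes of $\mathbf{G}$ with a prescribed $S_3$ equals $[E^+:E^2]\,\bigl|Sq^{-1}([\md\prod_{v\in S_3}\map_v])\bigr|$, with the convention that this term is $0$ when the class in question is not in the image of $Sq$; summing over all $S_3\subset T_3$ satisfying (A) yields the stated formula. The recurring delicate point is the bookkeeping with signs: one must use repeatedly that squares are totally positive in order to keep representatives inside $F_+^{\times}$, and one must take the \emph{ordinary} class group, not the narrow one, as the target of $\Psi$ precisely because $\gamma$ in the equivalence relation ranges over all of $F^{\times}$.
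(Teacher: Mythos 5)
Your argument is correct and follows the same route as the paper: fix $S_3$ satisfying (A), observe that $(8\beta)$ is narrowly trivial so the ordinary class $[\maa]$ must lie in $Sq^{-1}([\md\prod_{v\in S_3}\map_v])$, and then show each such class contributes exactly $[E^+:E^2]$ equivalence classes of triplets. The paper merely sketches this by citing Hammond's Theorem 2.9, whereas you have supplied the fibre computation in full; the details check out.
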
 

\begin{proof} 
We follow the argument of Hammond \cite{Ha1} Theorem 2.9. 
For given $S_3$ satisfying (A), the number of ideal classes $[\maa]$ such that 
$\maa^2$ is narrowly equivalent to $\md \prod_{v \in S_3} \map_v$ is equal to 
$|Sq^{-1}([\md \prod_{v \in S_3} \map_v])|$. 
Then for a given fractional ideal $\maa$ such that 
$\maa^2$ is narrowly equivalent to $\md \prod_{v \in S_3} \map_v$, 
the number of equivalence classes of triplets of the form $(\beta, S_3, \maa)$ 
such that $\beta \in F_+^{\times}$ satisfying (\ref{class}) is equal to $[E^+:E^2]$. 
\end{proof} 

\section{The case $F=\mq$ or $F$ is a real quadratic field} \label{quad}
Suppose that $F=\mq$. 
If $S_{\infty}=\emptyset$, the equivalence class of $\mathbf{G}$ is 
$\{(1/24, \{3\}, \mz)\}$. 
The theta function obtained by $\{(1/24, \{3\}, \mz)\}$ equals $2\eta(z)$. 
Then its multiplier system equals $\mv_{\eta}$ in (\ref{etamul}). 
If $S_{\infty}=\{\infty\}$, then the equivalence class of $\mathbf{G}$ is 
$\{(1/8, \emptyset, \mz)\}$. 
The theta function obtained by $\{(1/8, \emptyset, \mz)\}$ equals $2\eta^3(z)$. 
Then its multiplier system equals the cubic power of $\mv_{\eta}$.

Now suppose that $F=\mq(\sqrt{D})$, where $D>1$ is a square-free integer. 
When there exists $(\beta, S_3, \maa) \in \mathbf{G}$, one of the followings holds. 
\begin{itemize} 
\item[(C1)] $(8\beta) \md =\maa^2$ and $S_3=\emptyset$. 
\item[(C2)] $(8\beta) \md \map=\maa^2$ such that $N_{F/\mq}(\map)=3$ 
and $S_3=\{\map\}$. 
\item[(C3)] $(8\beta) \md \map \bar{\map}=\maa^2$ 
such that $N_{F/\mq}(\map)=N_{F/\mq}(\bar{\map})=3$ and $S_3=\{\map, \bar{\map}\}$. 
\end{itemize}  
If $|S_{\infty}|$ is even, (C1) or (C3) holds. 
If $|S_{\infty}|$ is odd, (C2) holds.  

Suppose that $D \equiv 1$ mod~8. 
Then 2 splits in $F/\mq$ and we have $\md=(\sqrt{D})$. 

%Dirichlet, Dedekind の"Lectures on Number Theory"の§70，§68の引用
\begin{lem} \label{norm}
Let $N$ be a positive square-free integer. 
Put $L=\mathbb{Q}(\sqrt{-1})$ \ (resp.~$L=\mathbb{Q}(\sqrt{-3})$).
Then the following statements are equivalent.
\begin{itemize}
\item[(a)] $N$ is a norm of an element of $L^\times$.
\item[(b)] $N$ is a norm of an integer of $L$.
\item[(c)] No prime factor of $N$ are inert in $L/\mathbb{Q}$.
\item[(d)] There exist integers $u$ and $v$ such that $N=u^2+v^2$ \ (resp.~$N=3u^2+v^2$).
\end{itemize}
\end{lem}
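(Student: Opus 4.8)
The plan is to prove the equivalences by establishing the cycle $(b)\Rightarrow(a)\Rightarrow(c)\Rightarrow(b)$, which gives $(a)\Leftrightarrow(b)\Leftrightarrow(c)$, and then separately $(b)\Leftrightarrow(d)$. The only substantive inputs are that both $L=\mq(\sqrt{-1})$ and $L=\mq(\sqrt{-3})$ have class number one, that $L$ is imaginary quadratic so $N_{L/\mq}$ is positive on $L^{\times}$, and the splitting behaviour of rational primes in $L$ (a prime $p$ is inert iff $\left(\frac{d_L}{p}\right)=-1$, and otherwise there is a prime ideal of $\mathcal{O}_L$ above $p$ of absolute norm $p$). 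I would treat the two fields in parallel except in the step $(b)\Leftrightarrow(d)$, where the shape of the norm form differs; write $\mathcal{O}_L$ for the ring of integers of $L$.

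The implication $(b)\Rightarrow(a)$ is trivial. For $(a)\Rightarrow(c)$, suppose $N=N_{L/\mq}(\alpha)$ with $\alpha\in L^{\times}$ and let $p$ be a rational prime inert in $L$. In the factorization of the fractional ideal $\alpha\mathcal{O}_L$ the ideal $p\mathcal{O}_L$ occurs with some exponent $k\in\mz$; taking absolute norms and using that $p\mathcal{O}_L$ has norm $p^{2}$, we get $\mathrm{ord}_{p}N=2k$, so square-freeness forces $p\nmid N$, which is exactly $(c)$. (The square-freeness is essential, e.g.\ $9=N_{\mq(\sqrt{-1})/\mq}(3)$ with $3$ inert.) For $(c)\Rightarrow(b)$: by $(c)$ every prime $p\mid N$ splits or ramifies, so we may choose a prime ideal $\mathfrak{p}_{p}\mid p$ of absolute norm $p$; as $N$ is square-free, $\maa=\prod_{p\mid N}\mathfrak{p}_{p}$ has absolute norm $N$. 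Since $L$ has class number one, $\maa=(\beta)$ with $\beta\in\mathcal{O}_L$, and then $N_{L/\mq}(\beta)=\pm N$; positivity of the norm form on the imaginary quadratic field $L$ gives $N_{L/\mq}(\beta)=N$, i.e.\ $(b)$.

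It remains to prove $(b)\Leftrightarrow(d)$. For $L=\mq(\sqrt{-1})$ we have $\mathcal{O}_L=\mz[\sqrt{-1}]$ and $N_{L/\mq}(u+v\sqrt{-1})=u^{2}+v^{2}$, so $(b)$ and $(d)$ are literally the same assertion. For $L=\mq(\sqrt{-3})$, put $\omega=\tfrac{-1+\sqrt{-3}}{2}$, so $\mathcal{O}_L=\mz[\omega]$ and $N_{L/\mq}(a+b\omega)=a^{2}-ab+b^{2}$; the inclusion $\mz[\sqrt{-3}]\subset\mathcal{O}_L$ together with $N_{L/\mq}(v+u\sqrt{-3})=v^{2}+3u^{2}$ gives $(d)\Rightarrow(b)$ at once. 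For the converse, suppose $N=a^{2}-ab+b^{2}$; the identity $4N=(2a-b)^{2}+3b^{2}$ shows $N=x^{2}+3y^{2}$ (with $y=b/2$) as soon as $b$ is even. If $b$ is odd, I would replace $a+b\omega$ by a suitable $\omega$-multiple: multiplication by the unit $\omega$ sends $(a,b)\mapsto(-b,a-b)$ and preserves the norm, and a short parity check — using that $N$ is odd (since $a^{2}-ab+b^{2}$ is even only when it is divisible by $4$, impossible for square-free $N$), so $a,b$ are not both even — shows that after one or two such multiplications the second coordinate becomes even, whence $N=3(b/2)^{2}+(a-b/2)^{2}$ and $(d)$ holds.

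The main obstacle is this last step: $x^{2}+3y^{2}$ is the norm form of the non-maximal order $\mz[\sqrt{-3}]$ rather than of $\mathcal{O}_{\mq(\sqrt{-3})}$, so the passage from $(b)$ to $(d)$ is not purely formal and genuinely requires adjusting a representative by a unit of $\mathcal{O}_L$. Everything else is bookkeeping around the decomposition of primes and the class-number-one property; one could alternatively deduce $(a)\Leftrightarrow(c)$ from the Hasse norm theorem and local computations of the norm groups, but the ideal-theoretic argument above is more self-contained.
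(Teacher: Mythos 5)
Your proof is correct, but it takes a genuinely different route from the paper on the main point. The paper only argues $(b)\Rightarrow(a)$ and $(a)\Rightarrow(c)$ directly (the latter essentially as you do: an inert prime is a prime element of norm $p^2$, so $p\mid N$ would force $p^2\mid N$, contradicting square-freeness), and it outsources the entire equivalence $(b)\Leftrightarrow(c)\Leftrightarrow(d)$ to Dirichlet's \emph{Lectures on number theory}, \S 68 and \S 70, adding only the remark that the arguments there (stated for $N$ odd, resp.\ $N$ odd and prime to $3$) extend to the general case. You instead give a self-contained proof: $(c)\Rightarrow(b)$ via the class-number-one property of $\mathbb{Z}[i]$ and $\mathbb{Z}[\omega]$ together with positivity of the norm form, and $(b)\Leftrightarrow(d)$ by comparing norm forms, where you correctly isolate the one nontrivial issue — that $3u^2+v^2$ is the norm form of the non-maximal order $\mathbb{Z}[\sqrt{-3}]$ rather than of $\mathbb{Z}[\omega]$ — and resolve it by adjusting a representative by a power of the unit $\omega$ after a parity check (your claim that one or two multiplications suffice checks out in all residual parity cases, since square-freeness rules out $a,b$ both even). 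What each approach buys: the paper's is shorter and rests on a classical reference, at the cost of a slightly hand-waving "the proof is valid in the general case"; yours is longer but fully self-contained and makes explicit exactly where square-freeness, class number one, and the distinction between the two orders in $\mathbb{Q}(\sqrt{-3})$ enter. Your fractional-ideal formulation of $(a)\Rightarrow(c)$ is also marginally more careful than the paper's, which writes $p\mid \alpha_N$ for an element $\alpha_N\in L^{\times}$ that need not be integral.
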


\begin{proof} 
The statements (b), (c) and (d) are equivalent by \cite[\S 68 and \S 70]{Di1}. 
If $L=\mathbb{Q}(\sqrt{-1})$, although \cite[\S 68]{Di1} treated the case $N$ is odd 
but the proof is valid in general case. 
If $L=\mathbb{Q}(\sqrt{-3})$, \cite[\S 70]{Di1} treated the case $N$ is odd 
and not divisible by 3, but the proof is valid in general case. 
If (b) holds, then clearly (a) holds. 

It suffices to show that if (a) holds, then (c) holds. 
Suppose that $\alpha_N \in L^{\times}$ satisfies $N=N_{L/\mq}(\alpha_N)$. 
If a prime $p$ is inert in $L/\mq$, it is a prime element of $L$ and 
we have $N_{L/\mq}(p)=p^2$. 
Then if $p \mid N$, we have $p \mid \alpha_N$ and $p^2 \mid N$, 
which contradicts that $N$ is square-free. 
\end{proof}

We consider an analogy of the following: 
if $K$ is a real quadratic field, then a necessary and sufficient condition 
that the narrow ideal class of the different of $K$ be a square 
is that the discriminant $D$ of $K$ be the sum of two integer squares
(see \cite{Ha1} Proposition 3.1). 

\begin{lem} \label{squ} 
A necessary and sufficient condition that the narrow ideal class of $\md \map$ is a square 
for a prime ideal $\map$ which has norm 3 
is that $D$ is of the form $3u^2+v^2$ for some $u, v \in \mathbb{N}$. 
\end{lem}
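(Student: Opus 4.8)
The plan is to translate the condition ``$[\md\map]$ is a square in $\mathrm{Cl}^+$'' into a congruence condition on the primes dividing $D$ by genus theory for $F=\mq(\sqrt D)$, and then to recognize that condition through Lemma~\ref{norm}. This runs parallel to Hammond's proof of the analogous criterion for $[\md]$ alone (sum of two squares), with $\mq(\sqrt{-1})$ replaced by $\mq(\sqrt{-3})$ and with the prime $\map$ above $3$ now entering the computation. First I would set up the genus field: write $D=p_1\cdots p_t$ with distinct odd primes (possible since $D\equiv 1\bmod 8$), put $p^\ast=(-1)^{(p-1)/2}p$, and let $\mathcal G=F(\sqrt{p_1^\ast},\ldots,\sqrt{p_t^\ast})$ be the narrow genus field of $F$. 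It is unramified over $F$ at every finite place, and the Artin map induces an isomorphism $\mathrm{Cl}^+/\mathrm{Cl}^{+2}\simeq\mathrm{Gal}(\mathcal G/F)$; hence a narrow ideal class is a square exactly when its Artin symbol is trivial, equivalently when every genus character $\chi_{d_1,d_2}$ of $F$, indexed by the factorizations $D=d_1d_2$ into coprime fundamental discriminants, annihilates it. Since $\md^2=(D)$ and $\map^2=(3)$ are generated by totally positive rationals, the classes $[\md]$, $[\map]$ and $[\md\map]$ all lie in $\mathrm{Cl}^+[2]$, and the problem becomes to decide when $\chi_{d_1,d_2}([\md])\chi_{d_1,d_2}([\map])=1$ for all $d_1,d_2$.

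The heart of the argument is the Legendre-symbol computation. For a prime ideal $\mathfrak q$ of $F$ of residue degree one over a rational prime $\ell$ one has $\chi_{d_1,d_2}(\mathfrak q)=\left(\frac{d_i}{\ell}\right)$ for whichever index $i$ satisfies $\ell\nmid d_i$ (the two choices agreeing when $\ell$ splits in $F$). Applying this to the ramified primes dividing $\md$ and to $\map$, and using that $\chi_{d_1,d_2}([\map])=\left(\frac{d_1}{3}\right)=\left(\frac{d_2}{3}\right)$ when $3\nmid D$ (these being equal because $3$ splits in $F$), while if $3\mid D$ one first replaces $\md\map$ by the narrowly equivalent $\md\map^{-1}$ so as to drop the prime $3$, quadratic reciprocity makes the product telescope to
\[
\chi_{d_1,d_2}([\md\map])=\prod_{p\mid d_1}\left(\frac{-3}{p}\right).
\]
The mechanism here is exactly that the factor $\left(\frac{3}{p}\right)$ contributed by $\map$ turns the $\left(\frac{-1}{p}\right)$ contributed by $\md$ into $\left(\frac{-3}{p}\right)$. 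Letting $d_1$ range over all admissible divisors, $[\md\map]$ is a square in $\mathrm{Cl}^+$ if and only if $\left(\frac{-3}{p}\right)=1$ for every prime $p\mid D$ with $p\neq 3$, i.e.\ if and only if no prime factor of $D$ is inert in $\mq(\sqrt{-3})/\mq$.

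Finally, Lemma~\ref{norm} applied with $L=\mq(\sqrt{-3})$ and $N=D$ (square-free and positive) identifies this last condition with ``$D=3u^2+v^2$ for some $u,v\in\mathbb N$'', which is the assertion. I expect the only real work to lie in the telescoping step: correctly evaluating $\chi_{d_1,d_2}$ on the ramified ideal $\md$, combining it with the value at $\map$, keeping track of all the reciprocity signs so that the product collapses to $\prod_{p\mid d_1}\left(\frac{-3}{p}\right)$, and handling separately the cases $3\mid D$ and $3\nmid D$. Everything else is formal genus theory together with the appeal to Lemma~\ref{norm}, in close imitation of Hammond \cite{Ha1}.
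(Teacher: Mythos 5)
Your proposal is correct in outline, but it follows a genuinely different route from the paper. The paper argues directly with norms: for necessity, from $(\sigma)\md\map=\maa^2$ with $\sigma=s+t\sqrt{D}$ totally positive it takes norms to get $3N_{F/\mq}(\sigma)D=A^2$, hence $D=\left(\frac{tD}{s}\right)^2+3\left(\frac{A}{3s}\right)^2$, and then invokes Lemma~\ref{norm} to pass from this \emph{rational} representation by $v^2+3u^2$ to an integral one; for sufficiency it writes down the explicit element $\rho=(v+\sqrt{D})/2$ with $N_{F/\mq}(\rho)=-3(u/2)^2$ and extracts from its ideal factorization an $\maa$ with $(\sqrt{D}\rho)\mathfrak{q}_3=\md\maa^2$. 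Your genus-theoretic argument instead reduces both directions at once to the single local criterion that no prime factor of $D$ be inert in $\mq(\sqrt{-3})$, and needs only the equivalence (c)$\Leftrightarrow$(d) of Lemma~\ref{norm}, whereas the paper needs (a)$\Rightarrow$(d). What the paper's method buys is that it is elementary, self-contained, and constructive --- the explicit pair $(\beta,\maa)$ it produces is exactly what is used in the example $D=793$ in Section~\ref{quad}; what yours buys is a cleaner intermediate statement and no case analysis between the two directions, at the cost of importing narrow genus theory and the evaluation of genus characters at ramified primes. Two details to repair when writing it out: the telescoped formula $\prod_{p\mid d_1}\left(\frac{-3}{p}\right)$ is only well defined if the product omits $p=3$ (in the ramified case it should run over the primes of whichever factor is coprime to $3$; the needed symmetry in $d_1\leftrightarrow d_2$ then holds because $\left(\frac{D}{3}\right)=1$ when $3\nmid D$, but fails if the factor at $3$ is retained); and the parenthetical claim that $[\map]$ lies in the $2$-torsion of $\mathrm{Cl}^+$ is false when $3$ splits, since then only $\map\bar{\map}=(3)$ is principal --- fortunately nothing in your argument actually uses it.
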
 

\begin{proof} 
Suppose that the narrow ideal class of $\md \map$ is a square with $N_{F/\mq}(\map)=3$. 
Then there exists $\sigma \in F_+^{\times}$ and a fractional ideal $\maa$ of $F$ 
such that $(\sigma) \md \map=\maa^2$. 
Taking the norm of both sides, we have $3N_{F/\mq}(\sigma)D=A^2$, where $A$ is the norm of $\maa$. 
Put $\sigma=s+t\sqrt{D}$ for $s, t \in \mathbb{Q}$ such that $s>0$. 
Since $3(s^2-t^2 D)D=A^2$, we have 
\[
D=\left(\frac{tD}{s}\right)^2 + 3\left(\frac{A}{3s}\right)^2. 
\]
Put $L=\mq(\sqrt{-3})$.
Then we have $D\in N_{L/\mq}(L^\times)$. 
Lemma \ref{norm} implies that $D=3u^2+v^2$ for some $u, v \in \mathbb{N}$.

We assume that there exists $u, v \in \mathbb{N}$ such that $D=3u^2+v^2$. 
Since $D \equiv 1$ mod~8, we have $u'=u/2 \in \mz$. 
Put $\rho=(v+\sqrt{D})/2$. 
Then we have $N_{F/\mq}(\rho)=(v^2-D)/4=-3u'^2$. 
Let $\mathfrak{q}=\mathfrak{q}_Q$ be a prime ideal which divides $\rho$, 
where $Q$ is a rational prime which is divisible by $\mathfrak{q}$. 
Since $\rho/Q\notin \mo$, 
we have $N_{F/\mq}(\mathfrak{q})=Q$ and
$\mathrm{ord} _{\mathfrak{q}} \rho=$$\mathrm{ord} _Q 3u'^2$. 
Therefore if $Q \ne 3$, $\mathrm{ord} _{\mathfrak{q}} \rho$ is even. 

Since $N_{F/\mq}(\rho)=-3{u'}^2$, there exists a prime ideal $\mathfrak{q}_3$ of $F$ 
which divides both $3$ and $\rho$. 
Since 3 splits or ramifies in $F/\mq$, $\mathrm{ord} _{\mathfrak{q}_3} \rho$ is odd. Put 
\begin{equation} \label{ham}
\maa=\prod_{\mathfrak{q} \nmid 3} \mathfrak{q}^{(\mathrm{ord}_{\mathfrak{q}} \rho)/2} 
\times \mathfrak{q}_3 ^{(\mathrm{ord}_{\mathfrak{q}_3} \rho +1)/2}. 
\end{equation}
Then we have $(\sqrt{D}\rho)\mathfrak{q}_3= \md \maa^2$. 
Since $\sqrt{D}\rho \in F_+^{\times}$, we have $\md\mathfrak{q}_3=(\md\maa)^2$ 
in $\mathrm{Cl}^+$. 
\end{proof} 

\begin{prop} 
Suppose that $F=\mq(\sqrt{D})$, where $D>1$ is a square-free integer such that 
$D \equiv 1$ mod~8. 
\begin{itemize} 
\item[(1)] There exist $\beta \in F_+^{\times}$ and a fractional ideal $\maa$ satisfying (\text{C1}) 
if and only if $p \equiv 1$ mod~4 for any prime $p \mid D$. 
\item[(2)] There exist $\beta \in F_+^{\times}$ and a fractional ideal $\maa$ satisfying (\text{C2})  
if and only if $p \equiv 0 \text{ or }1$ mod~3 for any prime $p \mid D$. 
\item[(3)] There exists $\beta \in F_+^{\times}$ and a fractional ideal $\maa$ satisfying (\text{C3}) 
if and only if $D \equiv 1$ mod~24 and $p \equiv 1 \text{ mod~4}$ for any prime $p \mid D$. 
\end{itemize}  
\end{prop}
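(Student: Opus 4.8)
The plan is to reduce each of (1), (2), (3) to the question of whether a certain class lies in $\mathrm{Cl}^{+2}$, and then to invoke Hammond's result \cite{Ha1} together with Lemmas \ref{squ} and \ref{norm}. Throughout I use that $D\equiv 1\bmod 8$, so $2$ splits in $F/\mq$ and $\md=(\sqrt{D})$, whence $N_{F/\mq}(\md)=D$.

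\emph{Step 1 (a class-group reduction).} First I would record the following, valid for any integral ideal $\mathfrak{I}$ of $F$: there exist $\beta\in F_+^{\times}$ and a fractional ideal $\maa$ with $(8\beta)\md\mathfrak{I}=\maa^2$ if and only if $[\md\mathfrak{I}]\in\mathrm{Cl}^{+2}$. Indeed, $8\beta\in F_+^{\times}$, so the principal ideal $(8\beta)$ is trivial in $\mathrm{Cl}^+$; hence $(8\beta)\md\mathfrak{I}=\maa^2$ forces $[\md\mathfrak{I}]=[\maa]^2\in\mathrm{Cl}^{+2}$. Conversely, if $[\md\mathfrak{I}]\in\mathrm{Cl}^{+2}$, write $\md\mathfrak{I}=(\gamma)\mathfrak{c}^2$ with $\gamma\in F_+^{\times}$; then $\beta=1/(2\gamma)\in F_+^{\times}$ gives $(8\beta)\md\mathfrak{I}=(4)\mathfrak{c}^2=((2)\mathfrak{c})^2$. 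I would apply this with $\mathfrak{I}=\mo$ for (C1), with $\mathfrak{I}=\map$ a prime of norm $3$ for (C2), and with $\mathfrak{I}=\map\bar\map=(3)$ for (C3); since $(3)$ is principal with totally positive generator, $[\md(3)]=[\md]$, so (C3) reduces, exactly like (C1), to $[\md]\in\mathrm{Cl}^{+2}$.

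\emph{Step 2 (the three translations).} For (1) the condition is $[\md]\in\mathrm{Cl}^{+2}$; since the discriminant of $F$ is $D$, Hammond \cite{Ha1} says this holds iff $D$ is a sum of two integer squares, and then Lemma \ref{norm} with $L=\mq(\sqrt{-1})$ (using that $D$ is odd and square-free) identifies this with: no prime factor of $D$ is inert in $\mq(\sqrt{-1})/\mq$, i.e.\ $p\equiv 1\bmod 4$ for every $p\mid D$. For (2) the condition is that there exist a prime $\map$ with $N_{F/\mq}(\map)=3$ and $[\md\map]\in\mathrm{Cl}^{+2}$; by Lemma \ref{squ} this is equivalent to $D=3u^2+v^2$, and by Lemma \ref{norm} with $L=\mq(\sqrt{-3})$ to: no prime factor of $D$ is inert in $\mq(\sqrt{-3})/\mq$, i.e.\ $p\equiv 0$ or $1\bmod 3$ for every $p\mid D$; here I would also note that this congruence condition forces $D\equiv 0$ or $1\bmod 3$, so that $3$ is not inert in $F$ and a prime of norm $3$ does exist (as is in any case produced by the proof of Lemma \ref{squ}). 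For (3), two distinct primes of norm $3$ exist iff $3$ splits in $F$, i.e.\ iff $D\equiv 1\bmod 3$; when it does, $\map\bar\map=(3)$ and the condition becomes $[\md]\in\mathrm{Cl}^{+2}$ together with $D\equiv 1\bmod 3$. By the analysis of (1) this means $p\equiv 1\bmod 4$ for all $p\mid D$ and $D\equiv 1\bmod 3$; combined with the standing hypothesis $D\equiv 1\bmod 8$ this is exactly $D\equiv 1\bmod 24$ and $p\equiv 1\bmod 4$ for every $p\mid D$.

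I expect the main obstacle to be bookkeeping rather than substance: making Step 1 precise (in particular checking total positivity of the auxiliary $\beta$ and that $\maa$ can be taken a genuine fractional ideal), and in (2) and (3) coupling the congruence conditions on the prime factors of $D$ correctly with the splitting/ramification behaviour of $3$ in $F$, so that the norm-$3$ prime ideals occurring in $S_3$ exist precisely when claimed. All the genuine number-theoretic content is then supplied by Hammond's theorem and Lemmas \ref{squ} and \ref{norm}.
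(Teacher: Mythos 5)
Your proposal is correct and follows essentially the same route as the paper: reduce each of (C1)--(C3) to whether $[\md]$, $[\md\map]$, or $[\md\map\bar\map]=[\md]$ lies in $\mathrm{Cl}^{+2}$, then invoke Hammond's Proposition 3.1 for (1) and (3), Lemma \ref{squ} for (2), and the classical two-squares / $3u^2+v^2$ criteria. Your Step 1 merely makes explicit (with the clean choice $\beta=1/(2\gamma)$) the converse direction of the class-group reduction that the paper leaves implicit, so there is nothing substantively different to flag.
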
 

\begin{proof} 
For a prime ideal $\map$ such that $N_{F/\mq}(\map)=3$, 
the equation $(8\beta) \md \map=\maa^2$ implies that 
the narrow ideal class of $\md \map$ is a square. 
Note that a positive integer $x$ is of the form $3u^2+v^2$ for some $u, v \in \mathbb{N}$ 
if and only if any prime $p$ which divides $x$ satisfies $p \equiv 0$ or $1$ mod~3. 
Then Lemma \ref{squ} proves the second assertion. 

The equation $(8\beta) \md=\maa^2$ implies that 
the narrow ideal class of $\md$ is a square. 
Note that a positive integer $x$ is of the form $u^2+v^2$ for some $u, v \in \mathbb{N}$ 
if and only if any prime $p$ which divides $x$ satisfies $p \equiv 1$ mod~4. 
Then \cite{Ha1} Proposition 3.1 proves the first assertion. 

There exist two distinct prime ideal $\map$ and $\bar{\map}$ such that 
such that $N_{F/\mq}(\map)=N_{F/\mq}(\bar{\map})=3$ if and only if 3 splits in $F/\mq$. 
This condition holds if and only if $D \equiv 1$ mod~24. 
In the case $D \equiv 1$ mod~24, we have $\map \bar{\map}=(3)$. 
Then the equation $(8\beta) \md \map \bar{\map}=\maa^2$ implies that 
the narrow ideal class of $\md$ is a square. 
Thus, similarly to the first assertion, \cite{Ha1} Proposition 3.1 proves the third assertion. 
\end{proof} 

Example: put $D=793=13 \cdot 61$. 
Then there exist $\beta \in F_+^{\times}$ and a fractional ideal $\maa$ satisfying 
any condition of (C1), (C2) or (C3). 
Moreover, $\mathrm{Cl}^+$ has order 8 
and the fundamental unit $\varepsilon$ of $F$ has norm 1. 
For example, put $\rho=(5+\sqrt{D})/2$. 
Since $N_{F/\mq}(\rho)=-3 \cdot 8^2$, we have $(\rho)=\mathfrak{q}_2^6 \mathfrak{q}_3$, 
where $\mathfrak{q}_3=(3, 1-\sqrt{D})$ and $\mathfrak{q}_2=(2, (1+\sqrt{D})/2)$ 
are prime ideals. 
Put $\beta=\rho \sqrt{D}/8$ and $\maa=\md \mathfrak{q}_2^3 \mathfrak{q}_3$. 
Then we have $(8\beta) \md \mathfrak{q}_3=\maa^2$.

\end{document}